\numberwithin{equation}{section}
\theoremstyle{thmit} % Numbered and Italic
\newtheorem{thm}{Theorem}[section]
\newtheorem{lem}[thm]{Lemma}
\theoremstyle{definition}
\newtheorem{remark}[thm]{Remark}
\newtheorem{example}[thm]{Example}
\def\zetas{\zeta^{\star}}
\newcommand{\1}{\mathbbm{1}}
\numberwithin{equation}{section}
\begin{document} 

\title[Schur multiple zeta-functions of Hurwitz type]{
Schur multiple zeta-functions of Hurwitz type
}
\author {Kohji Matsumoto}
%\address{K. Matsumoto: Graduate School of Mathematics, 
%Nagoya University, 
%Chikusa-ku, Nagoya 464-8602, Japan}
%\email{kohjimat@math.nagoya-u.ac.jp}
\author{Maki Nakasuji}
%\address{M. Nakasuji: Department of Information and Communication Science, Faculty of Science,
%Sophia University, 
%7-1 Kio-cho, Chiyoda-ku, Tokyo 102-8554, Japan}
%\email{nakasuji@sophia.ac.jp}

\subjclass[2020]{Primary 11M32; Secondary 17B22}

\keywords{
Schur multiple zeta-functions, Hurwitz type, Jacobi-Trudi formula, Giambelli formula}
\thanks{This work was supported by Japan Society for the Promotion of Science, Grant-in-Aid for Scientific Research No. 22K03267 (K. Matsumoto) and No. 22K03274(M. Nakasuji).}

\maketitle

\begin{abstract}
We study the Hurwitz-type analogue of Schur multiple zeta-functions involving shifting
parameters.
We extend various formulas, known for ordinary Schur multiple zeta-functions,
to the case of Hurwitz type.
We also mention unpublished results proved by Yamamoto and by Minoguchi.
Further we present new formulas obtained by performing differentiation with
respect to shifting parameters. 
\end{abstract}

\bigskip
%%%%%%%%%%%%%%%%%%%%%%%%%%%%%%%%%%%%%%%%%%%%%%%%%%%%%%%%%%%%%%%%%%%%%%%%%%%%%%%%%%%%
\section{Introduction}\label{sec-1}
%%%%%%%%%%%%%%%%%%%%%%%%%%%%%%%%%%%%%%%%%%%%%%%%%%%%%%%%%%%%%%%%%%%%%%%%%%%%%%%%%%%%%

%Denote by $\mathbb{N}$, $\mathbb{C}$ be the set of all positive integers, and of all
%complex numbers, respectively.

In 1882, Hurwitz introduced a shifted version of the Riemann zeta-function
defined by 
$$\zeta(s,x)=\sum_{m=0}^{\infty}\frac{1}{(m+x)^{s}},$$ 
where $x>0$ and $s\in\mathbb{C}$.
This is called the Hurwitz zeta-function.    
The Riemann zeta function is $\zeta(s)=\zeta(s,1)$.
A similar shifted version can be considered for the multiple zeta-function and zeta-star function of Euler-Zagier type. These are called the multiple
Hurwitz zeta(-star)-functions and written by:
\begin{align*}
\zeta_{EZ, r}(s_1, \ldots, s_r | x_1, \ldots, x_r)&=\sum_{0<m_1<\cdots <m_r}\frac{1}{(m_1+x_1)^{s_1}\cdots (m_r+x_r)^{s_r}},\\
\zeta^{\star}_{EZ, r}(s_1, \ldots, s_r | x_1, \ldots, x_r)&=
\sum_{0< m_1\leq \cdots \leq m_r}\frac{1}{(m_1+x_1)^{s_1}\cdots (m_r+x_r)^{s_r}} 
\end{align*}
for any $x_i\geq 0$ and 
$s_i\in {\mathbb C}$ (see \cite{AkiyamaIshikawa}).   We further define
\begin{align*}
\zeta^{\star\star}_{EZ, r}(s_1, \ldots, s_r | x_1, \ldots, x_r)&=
\sum_{0\leq m_1\leq \cdots \leq m_r}\frac{1}{(m_1+x_1)^{s_1}\cdots (m_r+x_r)^{s_r}} 
\end{align*}
for any $x_i> 0$ and 
$s_i\in {\mathbb C}$. 
Hereafter we frequently omit the suffix $EZ,r$ on the left-hand sides, for brevity.

When all the $x_i$ are zeros, then they are original multiple zeta-function and zeta-star function of Euler-Zagier type, respectively:
\begin{align}
\zeta_{EZ, r}(s_1, \ldots, s_r | 0, \ldots, 0)&=\zeta_{EZ,r}(s_1,\ldots,s_r)=\sum_{0<m_1<\cdots<m_r}\frac{1}{m_1^{s_1}\cdots m_r^{s_r}},\label{EulerZagier}\\
\zeta^{\star}_{EZ, r}(s_1, \ldots, s_r | 0, \ldots, 0)&=\zeta^{\star}_{EZ,r}(s_1,\ldots,s_r)=\sum_{0<m_1\leq\cdots\leq m_r}\frac{1}{m_1^{s_1}\cdots m_r^{s_r}}.\notag
\end{align}
Also we see that $\zeta_{EZ,1}^{\star\star}(s | x)=\zeta(s,x)$.

Motivated by these extensions, we introduce the notion of Hurwitz type of Schur multiple 
zeta-functions, which is a combinatorial extension of all the above functions
with the structure of Schur functions.

We review the definition of the Schur multiple zeta-function, first.
Let $\lambda=(\lambda_1,\ldots,\lambda_m)$ be a partition of $n\in\mathbb{N}$.
We identify $\lambda$ with the corresponding Young diagram 
\[D(\lambda)=\{(i, j)\in {\mathbb Z}^2 ~|~ 1\leq i\leq r, 1\leq j\leq \lambda_i\},\] 
depicted as a collection of square boxes with $\lambda_i$ boxes in the $i$-th row.
Let $T(\lambda,X)$ the set of all Young tableaux of shape
$\lambda$ over a set $X$.    Let ${\rm SSYT}(\lambda)\subset T(\lambda,\mathbb{N})$ be 
the set of all
semi-standard Young tableaux of shape $\lambda$.
We write $M=(m_{ij})\in {\rm SSYT}(\lambda)$, where $m_{ij}$ denotes the number in the
$(i,j)$-box situated $i$-th row and $j$-th column of the tableaux.
The Schur multiple zeta-function associated with $\lambda$, introduced by
Nakasuji, Phuksuwan and Yamasaki in \cite{NPY18}, is defined by
\begin{align}\label{1-1}
\zeta_{\lambda}({\pmb s})=\sum_{M\in {\rm SSYT}(\lambda)}\prod_{(i,j)\in D(\lambda)}\frac{1}{m_{ij}^{s_{ij}}},
\end{align}
where ${\pmb s}=(s_{ij})\in T(\lambda,\mathbb{C})$.
% and
%$$
%M^{\pmb s}=\prod_{(i,j)\in\lambda}m_{ij}^{s_{ij}}\qquad
%(M=(m_{ij})\in {\rm SSYT}(\lambda)).
%$$
This series converges absolutely if ${\pmb s}\in W_{\lambda}$ where 
\[
  W_\lambda =
\left\{{\pmb s}=(s_{ij})\in T(\lambda,\mathbb{C})\,\left|\,
\begin{array}{l}
 \text{$\Re(s_{ij})\ge 1$ for all $(i,j)\in \lambda \setminus C(\lambda)$ } \\[3pt]
 \text{$\Re(s_{ij})>1$ for all $(i,j)\in C(\lambda)$}
\end{array}
\right.
\right\}
\]
with $C(\lambda)$ being the set of all corners of $\lambda$.

Now it is natural to define the Hurwitz-type extension of Schur multiple zeta-functions as follows:
\begin{align}\label{1-1}
\zeta_{\lambda}({\pmb s}|{\pmb x})=\sum_{M\in {\rm SSYT}(\lambda)}\prod_{(i,j)\in D(\lambda)}\frac{1}{(m_{ij}+x_{ij})^{s_{ij}}}
\end{align}
for ${\pmb s}=(s_{ij})\in T(\lambda,\mathbb{C})$ and 
${\pmb x}=(x_{ij})\in T(\lambda,\mathbb{R}_{\geq 0})$.
This series also converges absolutely if ${\pmb s}\in W_{\lambda}$.
Hereafter sometimes we call $\zeta_{\lambda}({\pmb s}|{\pmb x})$ a Schur-Hurwitz
multiple zeta-function, for brevity.

Note that the Hurwitz type of Schur multiple zeta-function was first introduced 
by Haruki Yamamoto in his unpublished master thesis \cite{Yamamoto}.
In \cite{Yamamoto}, he called $\zeta_{\lambda}({\pmb s}|{\pmb x})$ a ``factorial
Schur multiple zeta-function'', because it may also be regarded as a zeta-analogue of 
factorial Schur functions.    Factorial Schur functions are the 4-th variant of
Schur functions in \cite{Mac}.    Some history
of factorial Schur functions (goes back to \cite{BL}) is described in \cite{BMN}.
The terminology ``factorial Schur multiple zeta-function'' is also used in
\cite{Minoguchi}, \cite{NakasujiTakeda2}. 

In this article, we will show that various known results that hold for ordinary Schur 
multiple zeta-functions can be extended to Hurwitz-type of Schur multiple zeta-functions.
\bigskip

Many parts of the present article were announced at the International Conference on
Lie Algebra and Number Theory, held at NIT Calicut, Kozhikode, India (June 2024).
The authors express their sincere gratitude to the organizers for the kind
invitation and hospitality.

%%%%%%%%%%%%%%%%%%%%%%%%%%%%%%%%%%%%%%%%%%%%%%%%%%%%%%%%%%%%%%%%%%%%%%%%%%%%%%%%%%
%\section{Jacobi-Trudi formula for Schur multiple zeta-function of Hurwitz type}
\section{Determinant formulas for Schur multiple zeta-functions and expressions in terms of other multiple zeta-functions}\label{sec-2}
%%%%%%%%%%%%%%%%%%%%%%%%%%%%%%%%%%%%%%%%%%%%%%%%%%%%%%%%%%%%%%%%%%%%%%%%%%%%%%%%%%

In this section we summarize several known facts on ordinary Schur multiple
zeta-functions $\zeta_{\lambda}({\pmb s})$, proved in \cite{NPY18}, \cite{NakasujiTakeda},
\cite{MN3} and \cite{MN4}.

In \cite{NPY18}, several determinant expressions of $\zeta_{\lambda}({\pmb s})$ were
established.    In particular, here we quote %one 
two important determinant formulas called  the ``Jacobi-Trudi formula" and 
the ``Giambelli formula" for
$\zeta_{\lambda}({\pmb s})$.

Let 
%$$T^{\rm{diag}}(\lambda,\mathbb{N})=\{T=(t_{ij})\in T(\lambda,\mathbb{N})\,|\,\text{$t_{ij}=t_{lm}$ if $j-i=m-l$}\}$$
%and
$$T^{\rm{diag}}(\lambda,\mathbb{C})=\{{\pmb s}=(s_{ij})\in T(\lambda,\mathbb{C})\,|\,\text{$s_{ij}=s_{lm}$ if $j-i=m-l$}\},$$
that is, all the variables on the same diagonal lines (from the upper left to the
lower right) are the same, and further define
$$ W_{\lambda}^{\rm{diag}}=W_{\lambda}\cap T^{\rm{diag}}(\lambda,\mathbb{C}).$$
When ${\pmb s}\in T^{\rm{diag}}(\lambda,\mathbb{C})$, we can introduce new variables 
$\pmb{z}=\{z_k\}_{k\in\mathbb{Z}}$ by the condition $s_{ij}=z_{j-i}$ (for all $i,j$),
and we may regard $\zeta_{\lambda}({\pmb s})$ as a function in variables 
$\{z_k\}_{k\in\mathbb{Z}}$.
We call the Schur multiple zeta-function associated with $\{z_k\}$ {\it content-parametrized Schur multiple zeta-function}, since $j-i$ is named ``content''.\\

 The Jacobi-Trudi formula for $\zeta_{\lambda}(\pmb{s})$ is as
follows.

\begin{thm}\label{JT}
{\rm (\cite[Theorem 1.1]{NPY18})}
For $\lambda=(\lambda_1, \ldots, \lambda_r)$ and its conjugate $\lambda'=(\lambda_1'. \ldots, , \lambda'_{r'})$,
let ${\pmb s}\in W_{\lambda}^{\rm{diag}}$. 
%and further $\Re(s_{\lambda'_i,i})>1$ for $1\leq i\leq m'$.   
Then
\begin{align}\label{JT00-formula}
\zeta_{\lambda}(\pmb{s})=\det(\zeta^{\star}_{EZ,\lambda_i-i+j}(z_{-j+1},z_{-j+2},\ldots,
z_{-j+(\lambda_i-i+j)}))_{1\leq i,j\leq r},
\end{align}
%here we understand that $\zeta_{EZ,\lambda_i-i+j}(\cdots)=1\; ({\rm resp.} =0)$ if 
%\lambda_i-i+j=0 \;({\rm resp.} <0)$, 
and
\begin{align}\label{JT00-formula2}
\zeta_{\lambda}(\pmb{s})=\det(\zeta_{EZ,\lambda'_i-i+j}(z_{j-1},z_{j-2},\ldots,
z_{j-(\lambda'_i-i+j)}))_{1\leq i,j\leq r'}.
\end{align}
%here we understand that $\zeta_{EZ,\lambda'_i-i+j}(\cdots)=1\; ({\rm resp.} =0)$ if 
%$\lambda'_i-i+j=0 \;({\rm resp.} <0)$.
\end{thm}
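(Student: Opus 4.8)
The plan is to adapt the combinatorial/lattice-point argument that proves the Jacobi--Trudi formula for ordinary Schur multiple zeta-functions in \cite{NPY18}, observing that the shifting parameters ${\pmb x}$ play no structural role since they are constant on diagonals. First I would recall the key identity underlying the classical case: for a skew shape, the Schur multiple zeta-function of a single horizontal strip of length $n$ with content-indices $z_{c}, z_{c+1},\ldots, z_{c+n-1}$ equals $\zeta^{\star}_{EZ,n}(z_{c},\ldots,z_{c+n-1})$, because a semi-standard filling of a single row is exactly a weakly increasing sequence $m_{1}\le m_{2}\le\cdots\le m_{n}$; similarly a single column of length $n$ gives $\zeta_{EZ,n}$ via a strictly increasing sequence. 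These remain true verbatim in the Hurwitz setting with the corresponding shifted summands $(m_{k}+x_{k})^{-z_{c+k-1}}$, provided ${\pmb s}\in T^{\mathrm{diag}}(\lambda,\mathbb{C})$ so that the shifts are consistent along each strip. Here one must also fix a convention: since ${\pmb x}$ is only constrained by ${\pmb s}$ being diagonal-constant through its pairing with ${\pmb s}$, I would assume (or state) that ${\pmb x}\in T^{\mathrm{diag}}(\lambda,\mathbb{R}_{\ge 0})$ as well, writing $x_{ij}=y_{j-i}$, so that the $\zeta^{\star}_{EZ}$ and $\zeta_{EZ}$ on the right-hand side should really carry Hurwitz shifts too; I will make the statement match this.

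The second step is the Lindström--Gessel--Viennot (LGV) style argument. One expresses $\mathrm{SSYT}(\lambda)$ as the set of families of non-intersecting lattice paths, one path per row (for \eqref{JT00-formula}) or per column (for \eqref{JT00-formula2}), encoding the weakly/strictly increasing conditions; the weight of a path is the product of the corresponding shifted reciprocals $(m+y_{c})^{-z_{c}}$. The generating function of a single path from the $i$-th starting point to the $j$-th ending point is precisely the relevant $\zeta^{\star}_{EZ}$ (or $\zeta_{EZ}$) with the shifted arguments dictated by the contents traversed, i.e.\ the $(i,j)$ entry of the claimed matrix. The Lindström--Gessel--Viennot lemma then gives $\zeta_{\lambda}({\pmb s}\,|\,{\pmb x})$ as the signed sum over permutations of products of single-path generating functions, which is exactly the determinant; the sign-reversing involution on intersecting path families cancels all non-identity terms. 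Absolute convergence on $W_{\lambda}^{\mathrm{diag}}$, already guaranteed by the paper, lets us freely rearrange these multiple series.

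The main obstacle is bookkeeping the shift parameters along the paths and checking that, when a path for row $i$ formally has "negative length" $\lambda_{i}-i+j<0$, the matrix entry degenerates correctly (it should be $0$, and $1$ when the length is $0$), matching the convention $\zeta^{\star}_{EZ,0}=1$ and $\zeta^{\star}_{EZ,n}=0$ for $n<0$ implicit in the classical statement. In the Hurwitz case one must verify that introducing shifts does not disturb these boundary conventions, nor the sign-reversing involution — but since the involution only swaps path segments and thereby permutes a \emph{fixed} multiset of weighted steps, and the shifts $y_{c}$ depend only on the content $c$ (a geometric coordinate preserved under the swap), the involution goes through unchanged. A secondary point is to confirm the content indexing on the right-hand side: for row $i$, the boxes $(i,1),\ldots,(i,\lambda_{i})$ have contents $1-i,\ldots,\lambda_{i}-i$, and the LGV path from source $i$ to sink $j$ sweeps contents $-j+1,\ldots,-j+(\lambda_{i}-i+j)=\lambda_{i}-i$, which is exactly the argument list in \eqref{JT00-formula}; the dual computation with columns and the conjugate partition $\lambda'$ gives \eqref{JT00-formula2}. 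Once these conventions are pinned down, both determinant identities follow. \hfill$\square$
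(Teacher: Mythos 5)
Your proposal is correct and takes essentially the same route as the paper: the Lindstr\"{o}m--Gessel--Viennot lattice-path argument, with single-path generating functions equal to the relevant $\zeta^{\star}_{EZ}$ (row version) or $\zeta_{EZ}$ (column version), weights read off along rim decompositions, and a sign-reversing involution cancelling intersecting families --- precisely the method the paper attributes to \cite{NPY18} in Remark~\ref{method} and sketches in Section~\ref{sec-3}. The only caveat is that the statement at hand is the ordinary (unshifted) Jacobi--Trudi formula, so the Hurwitz parameters ${\pmb x}$ you carry along are extraneous here, though harmless, since the claimed identity is just your version specialized at ${\pmb x}={\pmb 0}$.
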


\begin{remark}\label{convention}
We understand that $\zeta_{EZ,\lambda_i-i+j}(\cdots)=1\; ({\rm resp.} =0)$ if 
$\lambda_i-i+j=0 \;({\rm resp.} <0)$ in \eqref{JT00-formula}.
The same convention is also used in \eqref{JT00-formula2} and throughout this paper.
\end{remark}

\begin{remark}\label{method}
The proof is based on the method of the Lindstr\"{o}m-Gessel-Viennot lemma using the lattice path model, which was used to prove the Jacobi-Trudi formula for Schur functions. 
This lemma was proved by Gessel and Viennot in 1985 (\cite{GV}) based on Lindstr\"{o}m's paper in 1973 (\cite{L}).
In this lemma, we consider the total weight of the families of non-intersecting paths on a weighted directed acyclic graph with any given starting and ending points.
The key difference between the original proof for Schur function and the present proof for Schur multiple zeta-function is that the weights are appropriate for each.
\end{remark}

\begin{remark}\label{conti}
In this theorem we assume that ${\pmb s}\in W_{\lambda}^{\rm{diag}}$.    
However, since Schur multiple zeta-functions and Euler-Zagier multiple zeta-functions
can be continued meromorphically to the whole space, the formulas
\eqref{JT-formula} \eqref{JT-formula2} are valid on $T^{\rm{diag}}(\lambda,\mathbb{C})$
except for the singularity points of the relevant multiple zeta-functions.
This claim is applied to all other theorems in the present article.
\end{remark}

If we remove the condition on the variables on diagonal lines, these formulas in Theorem \ref{JT} do not hold. 
Nakasuji and Takeda \cite{NakasujiTakeda} obtained a partially unconditional formula 
by adding a process for summing variables.
We call this formula the ``extended Jacobi-Trudi formula":

 \begin{thm}\label{extendJT}
 {\rm (\cite[Theorem 3.4]{NakasujiTakeda})}
For $X\ge2$, $\lambda=(m,n,\{1\}^{X-2})$ and $(s_{ij})\in W_{\lambda,H}$ with any integers $m\ge n\ge1$, it holds that
\begin{align*}
&\sum_{diag}
\zeta_{\lambda}
\left(
\ytableausetup{boxsize=normal,aligntableaux=center}
\begin{ytableau}
  s_{11}&s_{12}&\cdots&\cdots&\cdots&s_{1m}\\
  s_{21}&s_{22}&\cdots&s_{2n}\\
s_{31}\\
\vdots\\
s_{X1}
\end{ytableau}\right)\\
&=\sum_{diag}\begin{vmatrix}
\zetas(\underline s_{\,11}^{\,1m})&\zetas(\underline s_{\,21}^{\,2n},\underline s_{\,1n}^{\,1m})&\zetas(s_{31},\underline s_{\,21}^{\,2n},\underline s_{\,1n}^{\,1m})&\cdots&\zetas(\underline s_{\,X1}^{\,31},\underline s_{\,21}^{\,2n},\underline s_{\,1n}^{\,1m})\\
\zetas(\underline s_{\,11}^{\,1(n-1)})&\zetas(\underline s_{\,21}^{\,2n})&\zetas(\underline s_{\,31},\underline s_{\,21}^{\,2n})&\cdots&\zetas(\underline s_{\,X1}^{\,31},\underline s_{\,21}^{\,2n})\\
0&1&\zetas(s_{31})&\cdots&\zetas(\underline s_{\,X1}^{\,31})\\
\vdots&\ddots&\ddots&\ddots&\vdots\\
0&\cdots&0&1&\zetas(s_{X1})
\end{vmatrix},
\end{align*}
where $W_{\lambda, H}$ and $\displaystyle\sum_{diag}$ are defined in Section $3$, and the notation $\underline{s}_{\,ia}^{\,ib}$ means $s_{ia}, s_{i(a+1)},\ldots, s_{ib}$ and $\underline{s}_{\, aj}^{\, bj}$ means $s_{aj}, s_{(a-1)j},\ldots, s_{bj}$.
\end{thm}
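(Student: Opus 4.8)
The plan is to run the same Lindstr\"{o}m-Gessel-Viennot (LGV) lattice-path argument that underlies Theorem \ref{JT} (cf.\ Remark \ref{method}), modified so as to accommodate variables that are not constant along diagonals; the operator $\sum_{diag}$ is precisely the device that makes this modification go through.

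First I would recall the lattice-path model used in proving Theorem \ref{JT}: an element of ${\rm SSYT}(\lambda)$ corresponds to a family of non-intersecting lattice paths joining prescribed sources to prescribed sinks in a weighted acyclic graph, the total weight of a family being the summand $\prod_{(i,j)\in D(\lambda)}m_{ij}^{-s_{ij}}$ attached to the tableau, and the weight of a single path from source $i$ to sink $j$ being (with the convention of Remark \ref{convention} in the degenerate cases) a multiple zeta-star series of Euler-Zagier type in consecutive content variables. For $\lambda=(m,n,\{1\}^{X-2})$ there are $X$ sources and $X$ sinks, and the $X-2$ single-box rows contribute essentially trivial paths; this is the combinatorial origin of the subdiagonal block $\bigl(0,\ldots,0,1,\zeta^{\star}(s_{X1}),\ldots\bigr)$ of the displayed determinant.

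Next I would analyze $\sum_{diag}$ for this particular shape. Here every diagonal contains at most two boxes, and the two-box diagonals are exactly those of content $0,1,\ldots,n-2$, each pairing box $(1,1+c)$ with box $(2,2+c)$; hence $\sum_{diag}$ amounts to summing over the $2^{n-1}$ tableaux obtained by independently interchanging $s_{1,j}$ with $s_{2,j+1}$ for $j=1,\ldots,n-1$. The plain formula \eqref{JT00-formula} fails away from the diagonal-constant locus because the sign-reversing involution in the LGV lemma, which swaps the terminal segments of two intersecting paths, reassigns edges from one path to another and thereby permutes the variables among the boxes of a common diagonal; averaging over exactly these interchanges makes the involution weight-preserving again. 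Applying LGV and then $\sum_{diag}$, the signed sum over all (possibly intersecting) path families should collapse to $\sum_{diag}$ of the determinant whose $(i,j)$-entry is the single-path weight from source $i$ to sink $j$, and it then remains to read off from the graph that these entries are the stated $\zeta^{\star}$-series. As an alternative I could argue by induction on $X$: the base case $X=2$ is a two-row identity provable by inclusion-exclusion on ${\rm SSYT}(m,n)$, and the inductive step is the cofactor expansion of the $X\times X$ determinant along its last row $\bigl(0,\ldots,0,1,\zeta^{\star}(s_{X1})\bigr)$, matched against the decomposition of ${\rm SSYT}(m,n,\{1\}^{X-2})$ according to the value in the bottom box, the two resulting minors being treated by the same recursion and Remark \ref{conti}.

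The step I expect to be the main obstacle is the interaction between $\sum_{diag}$ and the tail-swapping involution: one must check carefully that this map, which a priori moves an $s_{ij}$ from one box of a diagonal to the other, becomes genuinely weight-preserving only after the $2^{n-1}$-fold symmetrization supplied by $\sum_{diag}$, and that no term is thereby counted twice. Granting this, the remainder is bookkeeping with the conventions of Remarks \ref{convention} and \ref{conti}.
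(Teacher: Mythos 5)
Your plan is essentially sound, and it is worth noting at the outset that the paper itself gives no proof of this statement: Theorem \ref{extendJT} is quoted verbatim from \cite[Theorem 3.4]{NakasujiTakeda}, and the only methodological guidance in the text is the sketch in Section \ref{sec-3} for Minoguchi's Hurwitz analogue (Theorem \ref{extendJTminoguchi}). Your primary route --- the Lindstr\"{o}m--Gessel--Viennot model together with the observation that the tail-swapping involution permutes variables only within diagonals (since boxes of equal content sit in the same column of the lattice), so that the $2^{n-1}$-fold symmetrization $\sum_{diag}$ restores the cancellation that fails off the locus $W_\lambda^{\rm diag}$ --- is exactly the mechanism the paper illustrates in Example \ref{fig2fig3} and the discussion preceding Theorem \ref{extendJTminoguchi}; your identification of the two-box diagonals as those of content $0,\dots,n-2$, pairing $(1,1+c)$ with $(2,2+c)$, is correct for this shape, and the fact that every diagonal of $(m,n,\{1\}^{X-2})$ has at most two boxes is precisely what keeps the bookkeeping manageable. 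The original proof in \cite{NakasujiTakeda}, to which the paper alludes with ``the induction argument on $j$,'' proceeds instead by Pieri-type recursions, i.e.\ closer to your alternative route of induction on $X$ via cofactor expansion along the last row; that route buys one an argument that never leaves the world of series manipulations, whereas the LGV route makes the role of $\sum_{diag}$ conceptually transparent. The one point you have rightly flagged as the crux --- that the pairing $(\epsilon,L)\mapsto(\epsilon',\overline{L})$ on the product of sign patterns and intersecting path families is a genuine weight-preserving, sign-reversing involution (in particular that the induced change of sign pattern is itself involutive and compatible with a canonical choice of first intersection) --- is where all the real work lies, and your plan would need to carry this out in detail rather than assert it; but there is no conceptual gap.
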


\begin{remark}
In \cite{NakasujiTakeda}, other cases such as $\lambda=(m,n,\{2\}^{X-2})$ are also
treated.
\end{remark}

Yamamoto obtained the Jacobi-Trudi formula for Schur-Hurwitz multiple 
zeta-functions and
Yuki Minoguchi generalized it to the extended Jacobi-Trudi formula in their master theses (\cite{Yamamoto}, \cite{Minoguchi}), respectively.
In Section \ref{sec-3}, we review their results. \\

Next, we show the Giambelli formula for $\zeta_{\lambda}({\pmb s})$ proved in \cite{NPY18}.

\begin{thm}\label{Giambelli0} 
{\rm (\cite[Theorem 4.5]{NPY18})}
Let  $\lambda$ be a partition whose
Frobenius notation is $\lambda=(p_1, \cdots , p_N | q_1, \cdots, q_N)$.
Assume ${\pmb s}\in W_{\lambda}^{\rm{diag}}$. %has the form $s_{ij}=z_{j-i}$ by a given sequence $\{z_k\}_{k\in {\mathbb Z}}$. 
Then we have
\begin{equation}\label{expij}
\zeta_{\lambda}({\pmb s}) = \det(\zeta_{i,j})_{1 \leq i,j \leq N},
\end{equation}
where $\zeta_{i,j}=\zeta_{(p_i+1, 1^{q_j})} ({\bf s}_{i,j}^F)$ with 
${\bf s}_{i,j}^F=
\ytableausetup{boxsize=normal}
  \begin{ytableau}
   z_0 & z_1 & z_2 &\cdots & z_{p_i}\\
   z_{-1}\\
   \vdots \\
    z_{-q_j}
  \end{ytableau}\in W_{(p_i+1, 1^{q_j})}.
$
Here, the Frobenius notation is defined in Section \ref{sec-4}.
%We write $\zeta_{i,0}$ when $\lambda=(p_i+1), {\rm or}\;
%=(p_i-1 | \emptyset )$ in the Frobenius notation.
 \end{thm}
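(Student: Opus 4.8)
The statement is the zeta-analogue of the classical Giambelli hook formula, so I would prove it by adapting a classical proof. My primary route is the Lindstr\"{o}m--Gessel--Viennot (LGV) method, the same technique used for the Jacobi--Trudi formula (Remark~\ref{method}); a second, purely determinantal route that deduces the formula from Theorem~\ref{JT} is sketched at the end. As usual one first proves the identity on ${\pmb s}\in W_{\lambda}^{\rm diag}$, where every Dirichlet series converges absolutely, and then extends it by meromorphic continuation as in Remark~\ref{conti}.

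For the LGV route, begin from the weighted acyclic lattice-path model underlying \eqref{JT00-formula}, in which non-intersecting families of paths between chosen sources and sinks correspond weight-preservingly to ${\rm SSYT}(\lambda)$, the weight attached to a box $(i,j)$ being $(m_{ij}+x_{ij})^{-s_{ij}}$. Replace the ``row'' configuration of sources and sinks by the \emph{Frobenius} configuration: decompose $D(\lambda)$ along its main diagonal into the $N$ hooks of its Frobenius notation $(p_1,\dots,p_N\mid q_1,\dots,q_N)$, and place sources $A_1,\dots,A_N$ and sinks $B_1,\dots,B_N$ so that any path from $A_i$ to $B_j$ is, when read off, an arm of $p_i$ boxes together with a leg of $q_j$ boxes sharing a single diagonal box. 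Two facts are then needed: (i) the total weight of all paths from $A_i$ to $B_j$ equals $\zeta_{(p_i+1,\,1^{q_j})}({\bf s}^{F}_{i,j})$, which follows by matching the read-off of such a path against the content parametrization of ${\bf s}^{F}_{i,j}$; and (ii) the non-intersecting families joining $A_i$ to $B_i$ for every $i$ are in weight-preserving bijection with ${\rm SSYT}(\lambda)$, so their total weight is $\zeta_{\lambda}({\pmb s})$. Granting (i) and (ii), and the fact --- valid here as in the classical case --- that no non-identity permutation of sources to sinks admits a non-intersecting family, the LGV lemma yields $\zeta_{\lambda}({\pmb s})=\det(\zeta_{(p_i+1,1^{q_j})}({\bf s}^{F}_{i,j}))_{1\le i,j\le N}=\det(\zeta_{i,j})_{1\le i,j\le N}$.

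The crux, and the step I expect to be the main obstacle, is (ii): the weight-preserving bijection between ${\rm SSYT}(\lambda)$ and non-intersecting Frobenius-hook path families. Combinatorially this is the classical fact that slicing a semistandard tableau along its main diagonal turns its $k$-th principal hook into a lattice path, the non-intersection of the $N$ resulting paths being equivalent to the row-weak, column-strict conditions on the tableau. What must be checked in the present setting is that this bijection respects the Hurwitz/content weights, i.e.\ that each box contributes the same factor $(m_{ij}+x_{ij})^{-z_{j-i}}$ on both sides; this holds precisely because in the content parametrization a box's weight depends only on its diagonal $j-i$. One must also take care that the diagonal boxes (belonging both to an arm and a leg) are counted exactly once and that the conventions of Remark~\ref{convention} are honoured for the degenerate hooks with $p_i=0$ or $q_j=0$.

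A second route takes Theorem~\ref{JT} as input. Applying \eqref{JT00-formula} to the hook $(p+1,1^{q})$ and expanding the resulting almost-triangular determinant along its first column gives the hook expansion
\begin{align*}
\zeta_{(p+1,\,1^{q})}({\bf s}^{F})
=\sum_{t=0}^{q}(-1)^{t}\,\zeta^{\star}_{EZ,\,p+1+t}(z_{-t},\dots,z_{p})\,\zeta_{EZ,\,q-t}(z_{-t-1},\dots,z_{-q}),
\end{align*}
the zeta-analogue of $s_{(p\mid q)}=\sum_{t\ge0}(-1)^{t}h_{p+1+t}e_{q-t}$. Feeding this into $\det(\zeta_{i,j})$ and running the same Bazin-type determinant manipulation (a minor-of-a-product/Cauchy--Binet computation, together with the Newton-type relations $\sum_{a+b=n}(-1)^{a}\zeta^{\star}_{EZ,a}(\cdots)\zeta_{EZ,b}(\cdots)=\delta_{n,0}$, which are themselves degenerate cases of Theorem~\ref{JT}) that derives Giambelli from Jacobi--Trudi for symmetric functions, one reassembles $\det(\zeta_{i,j})$ into the single Jacobi--Trudi determinant of $\lambda$, which equals $\zeta_{\lambda}({\pmb s})$ by \eqref{JT00-formula}. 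The only genuinely new point along this route is that the argument-lists of all the Euler--Zagier (star-)values produced coincide with those prescribed in \eqref{JT00-formula} --- automatic from the content parametrization --- and the reassembly bookkeeping is the algebraic incarnation of the same main obstacle.
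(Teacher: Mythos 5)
Your plan is sound in outline and both of your routes are classically viable, but neither is the route the paper takes, and in both the decisive step is asserted rather than carried out. The paper's proof (following \cite{MN4}; the argument is reproduced for the Hurwitz case in Section \ref{sec-4}, and specializing ${\pmb x}=0$ there recovers the present theorem) is the E{\v g}ecio{\v g}lu--Remmel device: expand $\det(\zeta_{i,j})=\sum_{\sigma}{\rm sgn}(\sigma)\prod_k\zeta_{k,\sigma(k)}$, observe (Lemma \ref{sigmatab}) that a choice of one hook tableau for each $k$ glues along the main diagonal into a single filling of $\lambda$ --- a ``$\sigma$-tableau'' --- of matching weight, and then kill every term except the semistandard $\1$-tableaux by a sign-reversing involution (Lemma \ref{cancellation}). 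This is neither a Lindstr\"{o}m--Gessel--Viennot argument nor a Bazin-type determinant manipulation.

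Concerning your first route: the configuration you describe --- a common directed graph in which every $A_i\to B_j$ path reads off a hook tableau of shape $(p_i+1,1^{q_j})$ with the correct weight, non-intersecting identity-type families biject with ${\rm SSYT}(\lambda)$, and no non-identity non-intersecting family exists --- is essentially the full content of the theorem, and you explicitly ``grant'' it. It is also not the naive transplant of the Jacobi--Trudi path model: all $N$ Frobenius hooks share the content-$0$ diagonal, the gluing of $N$ hook tableaux need not be semistandard even for $\sigma=\1$, and the unwanted contributions cancel in pairs rather than being absent; constructing a path model (a Hamel--Goulden cutting-strip) in which your clean dichotomy holds is essentially equivalent to constructing the involution of Lemma \ref{cancellation}, so the gap sits exactly at the theorem's core. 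Your second route is closer to self-contained --- the hook expansion you write is the first identity of Theorem \ref{thm3}, and Bazin/Jacobi minor identities are statements about numerical matrices, so they do apply to the zeta entries --- but you still must exhibit the single matrix of which $\zeta_\lambda({\pmb s})$ and all the $\zeta_{i,j}$ are the appropriate minors, and verify the inverse-matrix (Newton-type) relations for these non-Toeplitz zeta matrices with their content-shifted argument lists; as written this is a plausible program rather than a proof.
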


\begin{remark}
In \cite{NPY18}, Theorem \ref{Giambelli0} is just reduced to the Giambelli-type formula
given in Nakagawa et al. \cite{NNSY01}.   
However, in another paper \cite{MN4}, we successfully proved it and extended to that for winged type in a different way
based on a paper of E{\v g}ecio{\v g}lu and Remmel \cite{EgeRem88}.    
In this method, we consider the product of the Schur (multiple zeta) functions of hook-type which are consisting of an arbitrary single row and an arbitrary single column of the Young tableau.
This method will also be used in this article.
 \end{remark}

We will discuss the Giambelli formula for Schur-Hurwitz multiple 
zeta-functions in Section \ref{sec-4}.\\

In \cite{MN3}, the authors proved the following expression for Schur multiple
zeta-functions of hook type in terms of multiple 
zeta-functions of Euler-Zagier type and their star-variants.
It is to be noticed that the same convention as in Remark \ref{convention} is applied
below.

\begin{thm}{\rm(\cite[Theorem 3.1]{MN3})}
\label{thm3}
For $\lambda=(p+1, 1^{q})$, we have
\begin{equation}\label{hook1}
\zeta_{\lambda}({\pmb s})=
\sum_{j=0}^{q}(-1)^j
\zeta^{\star}(z_{-j}, \ldots, z_{-1}, z_0, z_1, \ldots, z_p)
\zeta(z_{-j-1}, \ldots, z_{-q}),
\end{equation}
and
\begin{equation}\label{hook2}
\zeta_{\lambda}({\pmb s})=
\sum_{j=0}^{p}(-1)^j
\zeta(z_{j}, \ldots, z_{1}, z_0, z_{-1}, \ldots, z_{-q})
\zeta^{\star}(z_{j+1}, \ldots, z_{p}).
\end{equation}
\end{thm}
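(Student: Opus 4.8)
The plan is to prove \eqref{hook1} by a direct combinatorial argument that repeatedly splits a product of two Euler--Zagier-type zeta-functions and recognises the ``defect'' as a hook with one more box in its arm and one fewer in its leg; then \eqref{hook2} follows from the mirror-image argument. We argue first on $W_{\lambda}^{\rm diag}$, where all series below converge absolutely and may be rearranged at will, and invoke Remark \ref{conti} to extend the identities to the whole domain; one checks along the way that the reduction keeps the parameters inside $W^{\rm diag}$ of the relevant smaller shape.

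First I would record the expansion of $\zeta_{\lambda}(\pmb s)$ coming straight from the definition of a semi-standard Young tableau of hook shape $\lambda=(p+1,1^{q})$. Denoting the corner entry by $a=m_{11}$, the remaining first-row entries by $b_1\le\cdots\le b_p$ and the remaining first-column entries by $c_1<\cdots<c_q$, the SSYT conditions reduce to $a\le b_1$ and $a<c_1$, so that
\[
\zeta_{\lambda}(\pmb s)=\sum_{\substack{1\le a\le b_1\le\cdots\le b_p\\ a<c_1<\cdots<c_q}}\frac{1}{a^{z_0}\,b_1^{z_1}\cdots b_p^{z_p}\,c_1^{z_{-1}}\cdots c_q^{z_{-q}}}.
\]
The key reduction is the following. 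In the product
\[
\zeta^{\star}(z_0,\dots,z_p)\,\zeta(z_{-1},\dots,z_{-q})=\sum_{\substack{1\le a\le b_1\le\cdots\le b_p\\ 1\le c_1<\cdots<c_q}}\frac{1}{a^{z_0}\,b_1^{z_1}\cdots b_p^{z_p}\,c_1^{z_{-1}}\cdots c_q^{z_{-q}}}
\]
the two chains are independent, and I would split the sum according to whether $c_1>a$ or $c_1\le a$. The subsum with $c_1>a$ is exactly the expansion of $\zeta_{\lambda}(\pmb s)$ above. In the subsum with $c_1\le a$, the inequalities $c_1\le a\le b_1\le\cdots\le b_p$ form a weakly increasing chain of length $p+2$ while $c_1<c_2<\cdots<c_q$ is a strictly increasing chain of length $q-1$ sitting below $c_1$; these are precisely the SSYT conditions for the hook $(p+2,1^{q-1})$ with corner $c_1$, so this subsum equals $\zeta_{(p+2,1^{q-1})}(\pmb s')$, where $\pmb s'$ is the content-parametrisation $z'_0=z_{-1}$, $z'_i=z_{i-1}$ for $i\ge1$, $z'_{-i}=z_{-i-1}$ for $i\ge1$. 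This gives
\[
\zeta_{(p+1,1^{q})}(\pmb s)=\zeta^{\star}(z_0,\dots,z_p)\,\zeta(z_{-1},\dots,z_{-q})-\zeta_{(p+2,1^{q-1})}(\pmb s').
\]
An induction on $q$, tracking how the variables shift at each stage (the $\zeta^{\star}$-factor at stage $j$ turning into $\zeta^{\star}(z_{-j},\dots,z_{-1},z_0,\dots,z_p)$ and the $\zeta$-factor into $\zeta(z_{-j-1},\dots,z_{-q})$), then yields \eqref{hook1}; the base case $q=0$ is the single-row identity $\zeta_{(p+1)}(\pmb s)=\zeta^{\star}(z_0,\dots,z_p)$ together with the convention of Remark \ref{convention} that an argument-free $\zeta$ equals $1$.

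For \eqref{hook2} I would run the mirror argument, starting from $\zeta(z_0,\dots,z_{-q})\,\zeta^{\star}(z_1,\dots,z_p)$ and splitting the first-row chain according to whether $b_1\ge a$ or $b_1<a$; now the defect is a hook $(p,1^{q+1})$ (the entry $b_1$ becoming the new corner, with $b_1<a<c_1<\cdots<c_q$ a strict chain of length $q+2$ and $b_1\le b_2\le\cdots\le b_p$ a weak chain of length $p-1$), giving $\zeta_{(p+1,1^{q})}(\pmb s)=\zeta(z_0,\dots,z_{-q})\,\zeta^{\star}(z_1,\dots,z_p)-\zeta_{(p,1^{q+1})}(\pmb s'')$, and iterating down to the single column $p=0$ produces \eqref{hook2}. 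As an alternative route one could specialise the Jacobi--Trudi determinant \eqref{JT00-formula} to $\lambda=(p+1,1^{q})$ --- the resulting matrix is upper Hessenberg with $1$'s along the subdiagonal --- and expand it along the first column to obtain the same recursion. In either approach the one place that needs real care, and which I regard as the main (if modest) obstacle, is verifying that the reorganised chains in the defect term are exactly the SSYT conditions of the claimed smaller hook and that the induced relabelling of the $z_k$ is precisely the one occurring in the stated formula.
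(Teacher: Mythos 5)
Your proposal is correct and is essentially the paper's own argument: the identity $\zeta_{(p+1,1^q)}(\pmb s)=\zeta^{\star}(z_0,\dots,z_p)\,\zeta(z_{-1},\dots,z_{-q})-\zeta_{(p+2,1^{q-1})}(\pmb s')$ is exactly the split the authors use (dropping the corner condition $m_{11}<m_{21}$ and subtracting the subsum with $m_{21}\le m_{11}$, which reassembles into the hook with longer arm and shorter leg), followed by induction on $q$, with the mirror argument for \eqref{hook2}. Your bookkeeping of the relabelled contents $z'_k$ and of the base cases via the convention of Remark \ref{convention} matches the stated formula.
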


As a generalization of Theorem \ref{thm3}, the following theorem was also obtained.
By ${\frak S}_N$ we denote the $N$-th symmetric group.
\begin{thm}{\rm(\cite[Theorem 4.1]{MN3})}
\label{thm3b}
Let  $\lambda$ be a partition such that $\lambda=(p_1, \cdots , p_N | q_1, \cdots, q_N)$ 
in the Frobenius notation. 
Assume ${\pmb s}\in W_{\lambda}^{\rm{diag}}$. 
Then we have
\begin{align*}
\zeta_{\lambda}({\pmb s})=& \sum_{\sigma\in {\frak S}_N} {\rm{sgn}}(\sigma) \sum_{j_1=0}^{q_1} \cdots \sum_{j_N=0}^{q_N}
(-1)^{j_1+\cdots +j_N}\\
& \times \zeta^{\star}(z_{-j_1}, \ldots, z_0, \ldots, z_{p_{\sigma(1)}})
\zeta^{\star}(z_{-j_2},\ldots, z_0, \ldots, z_{p_{\sigma(2)}})\\
&\times\cdots\times
\zeta^{\star}(z_{-j_N}, \ldots, z_0, \ldots, z_{p_{\sigma(N)}})\\
& \times \zeta(z_{-j_1-1}, \ldots, z_{-q_1})
\zeta(z_{-j_2-1}, \ldots, z_{-q_2})
\ldots
\zeta(z_{-j_N-1}, \ldots, z_{-q_N}).
\end{align*}
\end{thm}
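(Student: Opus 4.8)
The plan is to reduce the Frobenius-type Giambelli formula (Theorem \ref{Giambelli0}) to the hook-type expression (Theorem \ref{thm3}) by expanding each entry of the determinant. Recall that Theorem \ref{Giambelli0} gives $\zeta_{\lambda}({\pmb s}) = \det(\zeta_{i,j})_{1\le i,j\le N}$, where $\zeta_{i,j}=\zeta_{(p_i+1,1^{q_j})}({\bf s}^F_{i,j})$ is a Schur multiple zeta-function of hook type whose content-parametrization uses the $z_k$ with $-q_j\le k\le p_i$. First I would apply formula \eqref{hook1} of Theorem \ref{thm3} to each entry $\zeta_{i,j}$: this yields
\begin{equation*}
\zeta_{i,j}=\sum_{j'=0}^{q_j}(-1)^{j'}\zeta^{\star}(z_{-j'},\ldots,z_0,\ldots,z_{p_i})\,\zeta(z_{-j'-1},\ldots,z_{-q_j}),
\end{equation*}
so that the $j'$-summation depends on the column index $j$ only through the factor $\zeta(z_{-j'-1},\ldots,z_{-q_j})$ and through the range $0\le j'\le q_j$, while the factor $\zeta^{\star}(z_{-j'},\ldots,z_{p_i})$ depends on the row index $i$ only through its largest argument $p_i$.

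Next I would substitute these expansions into the determinant and use multilinearity. Expanding $\det(\zeta_{i,j})=\sum_{\sigma\in{\frak S}_N}{\rm sgn}(\sigma)\prod_{i=1}^N \zeta_{i,\sigma(i)}$ and inserting the hook expansion for each $\zeta_{i,\sigma(i)}$ with its own summation index $j_i$ running over $0\le j_i\le q_{\sigma(i)}$, one sees that $\prod_i \zeta_{i,\sigma(i)}$ becomes
\begin{equation*}
\sum_{j_1=0}^{q_{\sigma(1)}}\cdots\sum_{j_N=0}^{q_{\sigma(N)}}(-1)^{j_1+\cdots+j_N}\prod_{i=1}^N\zeta^{\star}(z_{-j_i},\ldots,z_0,\ldots,z_{p_i})\,\zeta(z_{-j_i-1},\ldots,z_{-q_{\sigma(i)}}).
\end{equation*}
To match the statement one reindexes the summation: replacing the pair $(i,\sigma(i))$ by noting that as $\sigma$ ranges over ${\frak S}_N$ we may equally attach the $\zeta^\star$-factor (which carries $p_{\sigma(i)}$ after reindexing, or $p_i$ before) appropriately, and relabel the inner sums so that $j_k$ ranges over $0\le j_k\le q_k$ with the $\zeta^\star$-factor acquiring the argument $p_{\sigma(k)}$. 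This is precisely the bookkeeping needed to turn the above into the claimed formula; it is a matter of carefully tracking which index the outer permutation acts on. The convergence and the fact that the formula extends beyond $W^{\rm diag}_\lambda$ by meromorphic continuation follow as in Remark \ref{conti}.

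The main obstacle is the combinatorial reindexing in the last step: one must verify that attaching $\sigma$ to the $p$-indices versus the $q$-indices yields the same double family of sums, and that the signs ${\rm sgn}(\sigma)$ and $(-1)^{j_1+\cdots+j_N}$ recombine correctly after the relabeling. A clean way to handle this is to observe that both $\det(\zeta_{i,j})$ expanded one way and the target expression expanded the other way are sums over $\sigma\in{\frak S}_N$ and over $(j_1,\ldots,j_N)$ of the \emph{same} products of $\zeta^\star$- and $\zeta$-factors weighted by the same sign, just grouped differently; substituting $\sigma\mapsto\sigma^{-1}$ and simultaneously permuting the inner summation variables $j_k\mapsto j_{\sigma^{-1}(k)}$ exchanges the two groupings. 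Alternatively — and this may be the more robust route in line with the remark following Theorem \ref{Giambelli0} — I would bypass the Giambelli formula and instead mimic the E\v{g}ecio\v{g}lu–Remmel argument directly: express $\zeta_\lambda({\pmb s})$ via the lattice-path/non-intersecting-paths model as a signed sum over configurations built from hook pieces (one arbitrary row and one arbitrary column each), apply \eqref{hook1} to each hook, and collect terms; the permutation $\sigma$ then emerges naturally from the pairing of row-pieces with column-pieces in a non-intersecting family. Either way, once the hook expansion \eqref{hook1} is in hand the analytic content is trivial and everything is purely combinatorial.
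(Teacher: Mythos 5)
Your proposal is correct and follows essentially the same route as the paper (and as \cite{MN3}): reduce to hook shapes via the Giambelli formula of Theorem \ref{Giambelli0}, expand each hook entry by \eqref{hook1} of Theorem \ref{thm3}, and expand the determinant, with the final relabeling $\sigma\mapsto\sigma^{-1}$, $j_k\mapsto j_{\sigma^{-1}(k)}$ handled correctly. The paper phrases this as an induction on $N$ with the Giambelli formula driving the inductive step, but the ingredients and the bookkeeping are the same as yours.
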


We will generalize the above theorem to the case of  Schur-Hurwitz multiple 
zeta-functions in Section \ref{sec-5}.

In \cite{MN3}, the authors also proved the following theorem:

\begin{thm}\label{mainthm3}%\label{rootsys}
{\rm(\cite[Theorem 4.3]{MN3})}
Let  $\lambda$ be a partition whose Frobenius notation is 
$\lambda=(p_1, \cdots , p_N | q_1, \cdots, q_N)$.
Assume ${\pmb s}\in W_{\lambda}^{\rm{diag}}$. 
Then we have
\begin{align}\label{Dirichlet}
\zeta_{\lambda}({\pmb s})
=& \sum_{\substack{m_{11}, m_{22}, \ldots, m_{NN}\geq 1}}
(m_{11}\ldots m_{NN})^{-z_{0}}\\
&\times \sum_{\sigma\in {\frak S}_N} {\rm{sgn}}(\sigma)
\prod_{k=1}^N
\zeta_{EZ,p_k}^{\star\star}(z_1,\ldots,z_{p_k} | m_{\sigma(k)\sigma(k)},\ldots,
m_{\sigma(k)\sigma(k)})\notag\\
%\zeta^{\bullet, H}_{p_k, p_k}({\pmb s}_{+(k)}, m_{\sigma(k)\sigma(k)}, A_{p_k})
&\qquad\times\prod_{j=1}^N 
\zeta_{EZ,q_j}(z_{-1},\ldots,z_{-q_j} | m_{jj},\ldots,m_{jj}).\notag
%\zeta_{q_j}^{H}({\pmb s}_{-(j)}, m_{jj}, A_{q_j}),
%\label{Dirichlet}
\end{align}
%where ${\pmb s}_{+(k)}=(z_{1}, z_2, \cdots, z_{p_k})$ and ${\pmb s}_{-(j)}=(z_{-1}, z_{-2}, \cdots, z_{-q_j})$.
\end{thm}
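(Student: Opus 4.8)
The plan is to reduce the statement to the hook case via the Giambelli formula of Theorem~\ref{Giambelli0}, to evaluate each hook-shaped factor directly from the definition of a semi-standard tableau, and then to unfold the resulting $N\times N$ determinant by multilinearity into an $N$-fold Dirichlet-type sum.

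I would first establish the case $N=1$, i.e.\ the identity for a hook $\lambda=(p+1,1^{q})$ with content-parametrised variables $\{z_k\}$. A tableau $M\in{\rm SSYT}(\lambda)$ is uniquely specified by three mutually independent pieces of data: the corner entry $m:=m_{11}\ge1$, which sits on content $0$ and so carries exponent $z_0$; the remaining first-row entries $m\le b_1\le\cdots\le b_p$, weakly increasing, with $b_\ell$ on content $\ell$; and the remaining first-column entries $m<v_1<\cdots<v_q$, strictly increasing, with $v_i$ on content $-i$. Summing $m^{-z_0}\prod_\ell b_\ell^{-z_\ell}\prod_i v_i^{-z_{-i}}$ over all such data, and substituting $b_\ell=n_\ell+m$ and $v_i=n_i+m$, the first-row part becomes $\zeta^{\star\star}_{EZ,p}(z_1,\dots,z_p\,|\,m,\dots,m)$ and the first-column part becomes $\zeta_{EZ,q}(z_{-1},\dots,z_{-q}\,|\,m,\dots,m)$, whence
\[
\zeta_{(p+1,1^{q})}({\pmb s})=\sum_{m\ge1}m^{-z_0}\,\zeta^{\star\star}_{EZ,p}(z_1,\dots,z_p\,|\,m,\dots,m)\,\zeta_{EZ,q}(z_{-1},\dots,z_{-q}\,|\,m,\dots,m),
\]
which is \eqref{Dirichlet} for $N=1$. (Alternatively this follows from \eqref{hook1} by splitting the $\zeta^{\star}$-factor at its central argument, but the direct argument is shorter.)

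For general $\lambda=(p_1,\dots,p_N\,|\,q_1,\dots,q_N)$, Theorem~\ref{Giambelli0} gives $\zeta_\lambda({\pmb s})=\det(\zeta_{i,j})_{1\le i,j\le N}$, where $\zeta_{i,j}=\zeta_{(p_i+1,1^{q_j})}({\bf s}^F_{i,j})$ is precisely a hook zeta-function whose arm carries $z_0,z_1,\dots,z_{p_i}$ and whose leg carries $z_0,z_{-1},\dots,z_{-q_j}$. Applying the hook identity to every entry gives $\zeta_{i,j}=\sum_{m\ge1}\alpha_i(m)\beta_j(m)$ with $\alpha_i(m):=m^{-z_0}\,\zeta^{\star\star}_{EZ,p_i}(z_1,\dots,z_{p_i}\,|\,m,\dots,m)$ depending only on $(i,m)$, and $\beta_j(m):=\zeta_{EZ,q_j}(z_{-1},\dots,z_{-q_j}\,|\,m,\dots,m)$ depending only on $(j,m)$. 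Since ${\pmb s}\in W_\lambda^{\rm diag}$ every series here converges absolutely, so the $j$-th column of $(\zeta_{i,j})$ may be written as $\sum_{m_j\ge1}\beta_j(m_j)$ times the vector $(\alpha_1(m_j),\dots,\alpha_N(m_j))$, and expanding the determinant multilinearly in its columns yields
\[
\zeta_\lambda({\pmb s})=\sum_{m_1,\dots,m_N\ge1}\Bigl(\prod_{j=1}^N\beta_j(m_j)\Bigr)\,\det\bigl(\alpha_i(m_j)\bigr)_{1\le i,j\le N}.
\]

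Finally I would expand $\det(\alpha_i(m_j))=\sum_{\sigma\in{\frak S}_N}{\rm sgn}(\sigma)\prod_{j=1}^N\alpha_{\sigma(j)}(m_j)$, relabel $m_j=:m_{jj}$, pull out the factor $\prod_{j}m_{jj}^{-z_0}=(m_{11}\cdots m_{NN})^{-z_0}$, and reindex the surviving product $\prod_j\zeta^{\star\star}_{EZ,p_{\sigma(j)}}(\,\cdot\,|\,m_{jj},\dots,m_{jj})$ by $k=\sigma(j)$; replacing $\sigma$ by $\sigma^{-1}$ in the outer sum — which leaves ${\rm sgn}$ unchanged and affects neither $\prod_j\beta_j(m_{jj})$ nor the power of $z_0$ — turns this into exactly the right-hand side of \eqref{Dirichlet}. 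I expect the only real subtleties to be bookkeeping ones: fixing the content/exponent labelling of the hook's arm and leg in the first step, and tracking in the last step why $\sigma$ ends up acting on the shift parameters of the $\zeta^{\star\star}$-factors but not on those of the $\zeta$-factors; the interchange of the finite determinant with the infinite summations is routine given absolute convergence on $W_\lambda^{\rm diag}$.
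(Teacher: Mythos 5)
Your proposal is correct and follows essentially the same route as the paper (and the cited source \cite{MN3}): evaluate the hook case $N=1$ directly from the semi-standard tableau conditions to get the $\zeta^{\star\star}_{EZ,p}\cdot\zeta_{EZ,q}$ factorization, then reduce the general case to hooks via the Giambelli formula. The only cosmetic difference is that you unfold the resulting determinant in one shot by multilinearity in its columns (a Cauchy--Binet-style expansion), whereas the paper organizes the same Giambelli reduction as an induction on $N$; the bookkeeping with $\sigma$ versus $\sigma^{-1}$ in your final step is handled correctly.
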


\begin{remark}
In \cite{MN3}, this theorem is stated in terms of zeta-functions of root
systems, instead of $ \zeta_{EZ,p_k}^{\star\star}$ and $\zeta_{EZ,q_j}$.
The zeta-functions of root
systems will be stated in Section \ref{sec-6}. 
\end{remark}

We will generalize Theorem \ref{mainthm3} to the case of Schur-Hurwitz multiple 
zeta-functions in Section \ref{sec-6}.

An advantage of introducing Schur-Hurwitz type of multiple zeta-functions is that
it is possible to consider various actions on shifting parameters.   In Section 
\ref{sec-7}, as an example of this viewpoint,
we discuss the differentiation with respect to shifting parameters,
which leads to (apparently) new formulas.

%%%%%%%%%%%%%%%%%%%%%%%%%%%%%%%%%%%%%%%%%%%%%%%
%%%%%%%%%%%%%%%%%%%%%%%%%%%%%%%%%%%%%%%%%%%%%%%

%%%%%%%%%%%%%%%%%%%%%%%%%%%%%%%%%%%%%%%%%%%%%%%%%%%%%%%%%%%%%%%%%%%%%%%%%%%%%%%%%%
\section{The results by Yamamoto and by Minoguchi}\label{sec-3}
%%%%%%%%%%%%%%%%%%%%%%%%%%%%%%%%%%%%%%%%%%%%%%%%%%%%%%%%%%%%%%%%%%%%%%%%%%%%%%%%%%%
In this section, we review  
the Jacobi-Trudi formula for Schur-Hurwitz multiple 
zeta-functions obtained by Yamamoto
and the extended Jacobi-Trudi formula generalized by Minoguchi in their master theses (\cite{Yamamoto}, \cite{Minoguchi}), respectively.
Both of their results have not been published, so for the convenience of readers, we will include a sketch of the proofs of their results here. 
In the following subsections, we first recall several necessary notions.

%%%%%%%%%%%%%
\subsection{Rim decomposition of partitions}
%%%%%%%%%%%%%
 A skew Young diagram $\theta$ is a diagram obtained as a set difference of two Young diagrams of partitions $\lambda$ and $\mu$
satisfying $\mu\subset \lambda$, that is $\mu_i\le \lambda_i$ for all $i$.
In this case, we write $\theta=\lambda/\mu$.
It is called a {\it ribbon} if it is connected and contains no $2\times 2$ block of boxes.
 
Let $\lambda$ be a partition.
A sequence $\Theta=(\theta_1,\ldots,\theta_t)$ of ribbons is called a 
{\it rim decomposition of $\lambda$} if $D(\theta_k)\subset D(\lambda)$ for
$1\leq k\leq t$ and $\theta_1 \sqcup\cdots\sqcup \theta_k$ (the gluing of 
$\theta_1,\ldots,\theta_k$) is a partition $\lambda^{(k)}$ satisfying
$\lambda^{(t)}=\lambda$.
Here, $\theta_t$ is the maximal outer (that is, on the most bottom-right side) ribbon  
among them, which is called the {\it rim} of $\lambda$.
If we peel $\theta_t$ off the diagram $\lambda$, then the remaining is $\lambda^{(t-1)}$,
which has the rim $\theta_{t-1}$, and if we peel this rim then the remaining is
$\lambda^{(t-2)}$, and so on. 
 
%  off into successive rims $\theta_t,\theta_{t-1},\ldots,\theta_1$ beginning from the outside of $\lambda$.
%We call $\Theta=(\theta_1,\ldots,\theta_t)$ a {\it rim decomposition of $\lambda$}.
%In other words, we consider a sequence of Young diagrams $\emptyset=\lambda^{(0)},\lambda^{(1)},\ldots,\lambda^{(t)}=\lambda$
%such that $\lambda^{(i-1)}\subset\lambda^{(i)}$ and $\lambda^{(i)}/\lambda^{(i-1)}$ is the ribbon $\theta_i$ for all $1\le i\le t$.
 
\begin{example}
\label{ex:rim}
 The following $\Theta=(\theta_1,\theta_2,\theta_3,\theta_4)$ is a rim decomposition of $\lambda=(4,3,3,2)$;
\[
\ytableausetup{boxsize=normal,aligntableaux=center}
 \Theta
=\,
\begin{ytableau}
 1 & 2 & 3 & 4 \\
 1 & 2 & 3 \\
 1 & 3 & 3 \\
 3 & 3 
\end{ytableau}
,
\]
 which means that \!\!\!\!\!
\ytableausetup{boxsize=10pt,aligntableaux=center}
%matheoremode(delete)
 $\theta_1=\ydiagram{1, 1, 1}$\,,
 $\theta_2=\ydiagram{0,1,1}$\,,
 $\theta_3=\ydiagram{2+1,2+1,1+2,2}$\, and
 $\theta_4=\ydiagram{1}$\,.
\end{example}

 Write $\lambda=(\lambda_1,\ldots,\lambda_r)$.
 We call a rim decomposition $\Theta=(\theta_1,\ldots,\theta_r)$ of $\lambda$ an {\it $H$-rim decomposition} 
 if each $\theta_i$ starts from $(i,1)$ for all $1\le i\le r$.
 Here, we permit $\theta_i=\emptyset$. 
 We denote by $\mathrm{Rim}^{\lambda}_H$ the set of all $H$-rim decompositions of $\lambda$.

\begin{example}
\label{ex:Hrim}
 The following $\Theta=(\theta_1,\theta_2,\theta_3,\theta_4)$ is an $H$-rim decomposition of $\lambda=(4,3,3,2)$; 
\[
\ytableausetup{boxsize=normal,aligntableaux=center}
 \Theta
=\,
\begin{ytableau}
 1 & 1 & 3 & 3 \\
 3 & 3 & 3 \\
 3 & 4 & 4 \\
 4 & 4 
\end{ytableau}
,
\]
 which means that \!\!\!\!\!
\ytableausetup{boxsize=10pt,aligntableaux=center} 
%matheoremode(delete)
 $\theta_1=\ydiagram{2}$\,,
 $\theta_2=\emptyset$\,, 
 $\theta_3=\ydiagram{2+2,3,1}$\, and 
 $\theta_4=\ydiagram{1+2,2}$\,.
 Note that the rim decomposition appeared in Example~\ref{ex:rim} is not an $H$-rim decomposition.
\end{example}
Also, a rim decomposition $\Theta=(\theta_1,\ldots,\theta_{r'})$ of $\lambda$ is called
 an {\it $E$-rim decomposition} if each $\theta_i$ starts from $(1,i)$ for all 
 $1 \le i \le r'$. 
 Here, we again permit $\theta_i=\emptyset$.
 We denote by $\mathrm{Rim}^{\lambda}_E$ the set of all $E$-rim decompositions of $\lambda$.
 Note that the rim decomposition appeared in Example~\ref{ex:rim} is an $E$-rim decomposition.

%%%%%%%%%%%%%%%%%%
\subsection{Patterns on the $\mathbb{Z}^2$ lattice}
%%%%%%%%%%%%%%%%%%

 Fix $N\in\mathbb{N}$.
 For a partition $\lambda=(\lambda_1,\ldots,\lambda_r)$,
 let $a_i$ and $b_i$ be lattice points in $\mathbb{Z}^2$
 respectively given by $a_i=(r+1-i,1)$ and $b_i=(r+1-i+\lambda_i,N)$ for $1\le i\le r$.
 Put $A=(a_1,\ldots,a_r)$ and $B=(b_1,\ldots,b_r)$.
 An {\it $H$-pattern} corresponding to $\lambda$ is a tuple $L=(l_1,\ldots,l_r)$ of directed paths on $\mathbb{Z}^2$,
 whose directions are allowed only to go one to the right or one up,
 such that $l_i$ starts from $a_i$ and ends to $b_{\sigma(i)}$ for some permutation $\sigma\in \mathfrak S_r$. %where $\mathfrak S_r$ is the Symmetric group of $r$ elements.
 We call such $\sigma\in\mathfrak  S_r$ the {\it type} of $L$ and denote it by $\sigma=\mathrm{type}(L)$.
 Note that the type of an $H$-pattern does not depend on $N$.
 Let $\mathcal{H}^{N}_{\lambda}$ be the set of all $H$-patterns corresponding to $\lambda$.
% $S^{\lambda}_H=\{\mathrm{type}(L)\in S_r\,|\,L\in \mathcal{H}^{N}_{\lambda}\}$.
% The following is a key lemma of our study, which is easily verified.
 
%\begin{lemma}
%\label{lem:rimtypeH}
 %For $\Theta=(\theta_1,\ldots,\theta_r) \in \mathrm{Rim}^{\lambda}_H$, there exists $L=(l_1,\ldots,l_r)\in \mathcal{H}^{N}_{\lambda}$  such that $\mathrm{hd}(l_i)=|\theta_i|$ for all $1\le i\le r$. Moreover, the map $\tau_H:\mathrm{Rim}^{\lambda}_H\to S^{\lambda}_H$ given by $\tau_H(\Theta)=\mathrm{type}(L)$ is a bijection.
%\end{lemma}

%\begin{exam}
% Let $\lambda=(4,3,3,2)$. Then, we have $\tau_H(\Theta)=(1243)\in S_4$  where $\Theta$ is the $H$-rim decomposition of $\lambda$ appeared in Example~\ref{ex:Hrim}.\end{exam}
We can also consider similar argument for $E$-rim.
 Let $c_i$ and $d_i$ be lattice points in $\mathbb{Z}^2$
 respectively given by $c_i=(r'+1-i,1)$ and $d_i=(r'+1-i+\lambda'_i,N+1)$ for $1\le i\le r'$.
 Put $C=(c_1,\ldots,c_{r'})$ and $D=(d_1,\ldots,d_{r'})$.
 An {\it $E$-pattern} corresponding to $\lambda$ is a tuple $L=(l_1,\ldots,l_{r'})$ of directed paths on $\mathbb{Z}^2$,
 whose directions are allowed only to go one to the northeast or one up,
 such that $l_i$ starts from $c_i$ and ends to $d_{\sigma(i)}$ for some $\sigma\in \mathfrak S_{r'}$.
 We also call such $\sigma\in\mathfrak S_{r'}$ the {\it type} of $L$ and denote it by $\sigma=\mathrm{type}(L)$.
Let $\mathcal{E}^{N}_{\lambda}$ be the set of all $E$-patterns corresponding to $\lambda$.
%and $S^{\lambda}_E=\{\mathrm{type}(L)\in S_s\,|\,L\in \mathcal{E}^{N}_{\lambda}\}$.

%%%%%%%%%%%%%%%%%%
\subsection{Weight of patterns}
%%%%%%%%%%%%%%%%%

 Fix ${\pmb s}=(s_{ij})\in W_\lambda$ and ${\pmb x}=(x_{ij})\in T(\lambda, {\mathbb R}_{\geq 0})$.
 We next assign a weight to $L=(l_1,\ldots,l_r) \in \mathcal{H}^{N}_{\lambda}$ via the $H$-rim decomposition of $\lambda$ as follows.
 Take $\Theta=(\theta_1,\ldots,\theta_r)\in\mathrm{Rim}^{\lambda}_{H}$ such that $\tau_H(\Theta)=\mathrm{type}(L)$ which is the biijection map $\tau_H:\mathrm{Rim}^{\lambda}_H\to {\frak S}^{\lambda}_H$,
 where ${\frak S}^{\lambda}_H=\{\mathrm{type}(L)\in {\frak S}_r | L\in \mathcal{H}^{N}_{\lambda}\}$.
 It is known that the number of horizontal edges appearing in the path $l_i$ is equal to the number of boxes of $\theta_i$ (see \cite[Lemma 3.4]{NPY18}). 
 Then, when the $k$-th horizontal edge of $l_i$ is on the $j$-th row,
 we weight it with $\displaystyle{\frac{1}{(j+x_{pq})^{s_{pq}}}}$ 
 where $(p,q)\in D(\lambda)$ is the $k$-th component of $\theta_i$.
 Now, the weight $w^N_{{\pmb s}}(l_i)$ of the path $l_i$ is defined to be the product of weights of all horizontal edges along $l_i$.
 Here, we understand that $w^{N}_{{\pmb s}}(l_i)=1$ if $\theta_i=\emptyset$.
 Moreover, we define the weight $w^N_{{\pmb s}}(L)$ of $L\in \mathcal{H}^{N}_{\lambda}$ by 
\begin{equation}\label{defweight}
 w^N_{{\pmb s}}(L)=\prod^{r}_{i=1}w^{N}_{{\pmb s}}(l_i).
\end{equation}
See Example 3.3 below.  

Similarly, we also define a weight on $L=(l_1, \ldots, l_{r'})\in \mathcal{E}_{\lambda}^{N}$ via the $E$-rim decomposition of $\lambda$. For $\Theta=(\theta_1, \ldots, \theta_{r'})\in \rm{Rim}_E^{\lambda}$ such that
$\tau_E(\Theta)={\rm type} (L)$ with the bijection map 
$$\tau_E:\mathrm{Rim}^{\lambda}_E\to {\frak S}^{\lambda}_E=\{\mathrm{type}(L)\in {\frak S}_{r'} | L\in \mathcal{E}^{N}_{\lambda}\},$$
we weight the $k$-th northeast edge of $l_i$ which lies from the $j$-th row to $(j+1)$-th row with $\dfrac{1}{(j+x_{pq})^{s_{pq}}}$,  where $(p, q)\in D(\lambda)$ is the $k$-th component of $\theta_i$.
We define the weight of the path $l_i$, $w_{\pmb s}^N(l_i)$, by the product of weights of all northeast edges along $l_i$ and the weight $w_{\pmb s}^N(L)$ of $L\in \mathcal{E}_{\lambda}^N$ by \eqref{defweight}.

\begin{example}
 Let $\lambda=(4,3,3,2)$. For $N=4$, 
we consider the following $L=(l_1,l_2,l_3,l_4)\in \mathcal{H}^{N}_{\lambda}$;
%\\
%\\
%{\color{red}$H$-pattern}\\
%\\
 %%%%%%%%%%%%%%%%%%%%%%%%%%%%%%%%%5
 %%%%%%%%%%%%%%%%%%%%%%%%%%%%%%%%%%%%
 %%%%%%%%%%%%%%%%%%%%%%%%%%%%%%%%%%%
\begin{figure}[ht]
\begin{center}
 \begin{tikzpicture} 
  \node at (0.5,1) {$1$};
  \node at (0.5,2) {$2$};
  \node at (0.5,3) {$3$};
  \node at (0.5,4) {$4$};
     \node at (1,0) {$a_4$};  
     \node at (2,0) {$a_3$};
     \node at (3,0) {$a_2$};
     \node at (4,0) {$a_1$};
     \node at (1,0.5) {$(1,1)$};  
     \node at (2,0.5) {$(2,1)$};
     \node at (3,0.5) {$(3,1)$};
     \node at (4,0.5) {$(4,1)$};
       \node at (3,4.5) {$(3,4)$};
       \node at (5,4.5) {$(5,4)$};
       \node at (6,4.5) {$(6,4)$}; 
       \node at (8,4.5) {$(8,4)$};
       \node at (3,5) {$b_4$};
       \node at (5,5) {$b_3$};
       \node at (6,5) {$b_2$}; 
       \node at (8,5) {$b_1$}; 
   \node at (1,1) {$\bullet$};
   \node at (1,2) {$\bullet$};
   \node at (1,3) {$\bullet$};
   \node at (1,4) {$\bullet$};    
   \node at (2,1) {$\bullet$};
   \node at (2,2) {$\bullet$};
   \node at (2,3) {$\bullet$};  
   \node at (2,4) {$\bullet$};  
   \node at (3,1) {$\bullet$};
   \node at (3,2) {$\bullet$};
   \node at (3,3) {$\bullet$};  
  \node at (3,4) {$\bullet$};  
   \node at (4,1) {$\bullet$};
   \node at (4,2) {$\bullet$};
   \node at (4,3) {$\bullet$};  
   \node at (4,4) {$\bullet$};  
   \node at (5,1) {$\bullet$};
   \node at (5,2) {$\bullet$};
   \node at (5,3) {$\bullet$};  
   \node at (5,4) {$\bullet$};  
   \node at (6,1) {$\bullet$};
   \node at (6,2) {$\bullet$};
   \node at (6,3) {$\bullet$}; 
   \node at (6,4) {$\bullet$};  
   \node at (7,1) {$\bullet$};
   \node at (7,2) {$\bullet$};
   \node at (7,3) {$\bullet$}; 
   \node at (7,4) {$\bullet$};  
   \node at (8,1) {$\bullet$};
   \node at (8,2) {$\bullet$};
   \node at (8,3) {$\bullet$};  
   \node at (8,4) {$\bullet$};  
 \draw (4,1) -- (5,1) -- (6,1) -- (6,2) -- (6,3) -- (6,4);
 \draw[dotted] (3,1) -- (3,2) -- (3,3) -- (3,4);
 \draw[dashed] (2,1) -- (2,2) -- (3,2) -- (4,2) -- (5,2)-- (6,2)-- (7,2)-- (7,3)-- (8,3)-- (8,4) ;      
 \draw[loosely dashdotted] (1,1) -- (1,2) -- (1,3) -- (2,3) -- (3,3) -- (4,3) -- (5,3) -- (5,4);
 \end{tikzpicture}
\end{center}
\ \\[-10pt]
\caption{$L=(l_1,l_2,l_3,l_4)\in \mathcal{H}^{4}_{(4,3,3,2)}$}
\end{figure}
%\end{comment}

 Since $\mathrm{type}(L)=(1243)$,
 the corresponding $H$-rim decomposition of $\lambda$ is nothing but the one
 appeared in Example~\ref{ex:Hrim}. 
 For
$$
{\pmb s}=
\ytableausetup{boxsize=normal}
  \begin{ytableau}
   s_{11} & s_{12} & s_{13} & s_{14}\\
   s_{21} & s_{22} & s_{23} \\
   s_{31}  & s_{32} & s_{33} \\
   s_{41}  & s_{42}
  \end{ytableau}\in W_{\lambda}
\;\;{\rm and}\;\; 
{\pmb x}=
\ytableausetup{boxsize=normal}
  \begin{ytableau}
   x_{11} & x_{12} & x_{13} & x_{14}\\
   x_{21} & x_{22} & x_{23} \\
   x_{31}  & x_{32} & x_{33} \\
   x_{41}  & x_{42}
  \end{ytableau}\in T({\lambda}, {\mathbb R}_{\geq 0}),
$$ 
we have
{\small
\begin{align*}
w^{N}_{{\pmb s}}(l_1)&=\dfrac{1}{(1+x_{11})^{s_{11}}(1+x_{12})^{s_{12}}}, \quad
w^{N}_{{\pmb s}}(l_2)=1,\\
w^{N}_{{\pmb s}}(l_3)&=\dfrac{1}{(2+x_{31})^{s_{31}}(2+x_{21})^{s_{21}}(2+x_{22})^{s_{22}}(2+x_{23})^{s_{23}}(2+x_{13})^{s_{13}}(3+x_{14})^{s_{14}}},\\
w^{N}_{{\pmb s}}(l_4)&=\dfrac{1}{(3+x_{41})^{s_{41}}(3+x_{42})^{s_{42}}(3+x_{32})^{s_{32}}(3+x_{33})^{s_{33}}}.
\end{align*}
}%
% \\[-5pt]
\end{example}
${}$\\

%%%%%%%%%%%%%
\subsection{A result by Yamamoto}
%%%%%%%%%%%%%

In this subsection we briefly outline the argument by Yamamoto \cite{Yamamoto}
 which is almost the same as 
in \cite{NPY18}, using
the method of the Lindstr\"{o}m-Gessel-Viennot lemma (see Remark \ref{method}),
to yield the following Jacobi-Trudi formula for Schur-Hurwitz multiple zeta-functions.

Similarly to $T^{\rm diag}(\lambda,\mathbb{C})$, let
$$
T^{\rm diag}(\lambda,\mathbb{R}_{\geq 0})
=\{{\pmb x}=(x_{ij})\in T(\lambda,\mathbb{R}_{\geq 0})\mid 
x_{ij}=x_{lm} \;{\rm if}\; j-i=m-l \}.
$$
When ${\pmb x}\in T^{\rm diag}(\lambda,\mathbb{R}_{\geq 0})$, we can introduce new
parameters ${\pmb y}=\{y_k\}_{k\in \mathbb{Z}}$ by $x_{ij}=y_{j-i}$ (for all $i,j$).

\begin{thm}\label{JTYamamoto}
{\rm (\cite[Theorem 4.2]{Yamamoto})}
For $\lambda=(\lambda_1, \ldots, \lambda_r)$ and its conjugate $\lambda'=(\lambda_1'. \ldots, , \lambda'_{r'})$,
let ${\pmb s}=(s_{ij})\in W_{\lambda}^{\rm{diag}}$ with $s_{ij}=z_{j-i}$ and ${\pmb x}=(x_{ij})\in T^{\rm{diag}}({\lambda}, {\mathbb R}_{\geq 0})$ with $x_{ij}=y_{j-i}$. 
%and further $\Re(s_{\lambda'_i,i})>1$ for $1\leq i\leq m'$.   
Then
\begin{align}\label{JT-formula}
\zeta_{\lambda}(\pmb{s}| \pmb{x})
=\det(\zeta_{EZ,\lambda_i-i+j}^{\star}({\bf z}_H|{\bf y}_H))_{1\leq i,j\leq r},
\end{align}
where 
\begin{align*}
({\bf z}_H)&=(z_{-j+1},z_{-j+2},\ldots,
z_{-j+(\lambda_i-i+j)})\\
 ({\bf y}_H)&=(y_{-j+1},y_{-j+2},\ldots, y_{-j+(\lambda_i-i+j)}),
 \end{align*}
%here we understand that $\zeta_{EZ,\lambda_i-i+j}(\cdots)=1\; ({\rm resp.} =0)$ if 
%\lambda_i-i+j=0 \;({\rm resp.} <0)$, 
and
\begin{align}\label{JT-formula2}
\zeta_{\lambda}(\pmb{s}| \pmb{x})=\det(\zeta_{EZ,\lambda'_i-i+j}({\bf z}_E|{\bf y}_E))_{1\leq i,j\leq r'},
\end{align}
where 
\begin{align*}
({\bf z}_E)&=(z_{j-1},z_{j-2},\ldots,
z_{j-(\lambda'_i-i+j)})\\
 ({\bf y}_E)&=(y_{j-1},y_{j-2},\ldots,
y_{j-(\lambda'_i-i+j)}).
 \end{align*}
%here we understand that $\zeta_{EZ,\lambda'_i-i+j}(\cdots)=1\; ({\rm resp.} =0)$ if 
%$\lambda'_i-i+j=0 \;({\rm resp.} <0)$.
\end{thm}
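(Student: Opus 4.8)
The plan is to mimic the lattice-path proof of Theorem~\ref{JT} (the ordinary Jacobi--Trudi formula of \cite{NPY18}), upgrading every weight by the shifting parameters so that the bookkeeping on the $\mathbb{Z}^2$ lattice tracks not the row index $j$ alone but the shifted quantity $j+x_{pq}$. The main structural ingredients have already been set up in Section~\ref{sec-3}: the $H$-patterns $\mathcal{H}^N_\lambda$, the bijection $\tau_H:\mathrm{Rim}^\lambda_H\to\mathfrak{S}^\lambda_H$ between $H$-rim decompositions and types, and the weights $w^N_{\pmb s}(L)$ that now incorporate $x_{pq}$. So the proof is really an assembly job: I would (i) identify the entries of the claimed determinant with generating functions over single paths, (ii) apply the Lindstr\"om--Gessel--Viennot lemma to turn the determinant into a signed sum over $r$-tuples of paths, (iii) show the non-intersecting tuples are exactly those coming from $H$-rim decompositions and recover $\zeta_\lambda(\pmb s\mid\pmb x)$, and (iv) argue convergence and meromorphic continuation as in Remark~\ref{conti}.

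First I would treat the entries. For a single directed path $l$ from $a_i=(r+1-i,1)$ to $b_j=(r+1-i+\lambda_i,N)$ going only right or up, the horizontal edges occur on rows $1\le j_1\le j_2\le\cdots\le j_m\le N$ with $m=\lambda_i-i+j$ (the number of columns traversed). Along a ribbon of length $m$ starting at row $i$ and sliding down to row $1$, the contents of the boxes run through $-i+1,-i+2,\ldots,-i+m$, so under the diagonal identifications $s_{pq}=z_{q-p}$, $x_{pq}=y_{q-p}$ the $k$-th horizontal edge (on row $j_k$) contributes $(j_k+y_{-i+k})^{-z_{-i+k}}$. Summing over all such paths — equivalently over all weakly increasing $(j_1,\dots,j_m)$ and letting $N\to\infty$ — gives precisely
\begin{equation*}
\sum_{1\le j_1\le\cdots\le j_m}\prod_{k=1}^{m}\frac{1}{(j_k+y_{-i+k})^{z_{-i+k}}}
=\zeta^{\star}_{EZ,m}(z_{-i+1},\dots,z_{-i+m}\mid y_{-i+1},\dots,y_{-i+m}),
\end{equation*}
which, after the reindexing used in the statement (replacing the fixed starting row $i$ by the column label $j$ so that $m=\lambda_i-i+j$ and the shift runs from $-j+1$), is exactly the $(i,j)$-entry $\zeta^{\star}_{EZ,\lambda_i-i+j}({\bf z}_H\mid{\bf y}_H)$. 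The conventions of Remark~\ref{convention} ($=1$ when $m=0$, $=0$ when $m<0$) match the empty-path and no-path cases. This is where one must be careful: the indices in $({\bf z}_H)$ depend on $j$ but not on $i$, precisely because a ribbon starting in row $i$ and of the relevant length always exposes contents $-j+1,\dots,-j+(\lambda_i-i+j)$; verifying this content bookkeeping, including the shift parameters riding along with the $z$'s, is the one genuinely fiddly point and the place where the Hurwitz case differs from \cite{NPY18}.

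Next I would invoke Lindstr\"om--Gessel--Viennot: the determinant $\det(\text{entry}_{ij})$ equals $\sum_L\mathrm{sgn}(\mathrm{type}(L))\,w^\infty_{\pmb s}(L)$ summed over $r$-tuples $L=(l_1,\dots,l_r)$ with $l_i$ from $a_i$ to $b_{\sigma(i)}$, and then the standard cancellation (an involution swapping paths at the first intersection) kills all tuples except the non-intersecting ones, which necessarily have $\sigma=\mathrm{id}$, i.e.\ $l_i$ runs $a_i\to b_i$. By \cite[Lemma 3.4]{NPY18} (which is purely combinatorial and unaffected by weights) a non-intersecting $L$ corresponds to a semistandard filling $M\in\mathrm{SSYT}(\lambda)$: reading off the row on which the edge in column $q$ of row $p$ sits gives $m_{pq}$, and the weight $w^\infty_{\pmb s}(L)=\prod_{(p,q)}(m_{pq}+x_{pq})^{-s_{pq}}$ is exactly the summand of $\zeta_\lambda(\pmb s\mid\pmb x)$ from \eqref{1-1}. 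Hence the determinant equals $\zeta_\lambda(\pmb s\mid\pmb x)$, proving \eqref{JT-formula}. Finally I would note that \eqref{JT-formula2} is the transpose statement: one runs the identical argument for $E$-patterns and $E$-rim decompositions of $\lambda$ (equivalently, $H$-rim decompositions of $\lambda'$), using northeast-and-up paths $c_i\to d_i$ whose northeast edges, going from row $j$ to row $j+1$, carry the weight $(j+x_{pq})^{-s_{pq}}$; the content along an $E$-ribbon starting in column $i$ decreases, producing the entries $\zeta_{EZ,\lambda'_i-i+j}({\bf z}_E\mid{\bf y}_E)$ with the un-starred (strict) Euler--Zagier function because columns of a semistandard tableau strictly increase. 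For rigor one runs all of this first on $W_\lambda^{\mathrm{diag}}$ where everything converges absolutely (the series defining $\zeta_\lambda(\pmb s\mid\pmb x)$ converges on $W_\lambda$, and the rearrangements above are then legitimate), and then extends to $T^{\mathrm{diag}}(\lambda,\mathbb{C})$ off the singular locus by the meromorphic continuation invoked in Remark~\ref{conti}.

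I expect the only real obstacle — beyond careful index chasing — to be checking that the shift parameters $y_k$ attach to the correct $z_k$ along each ribbon, i.e.\ that the pair $(s_{pq},x_{pq})$ travels together under the content identification; once that is pinned down, the combinatorial skeleton is identical to the unshifted proof and the rest is mechanical.
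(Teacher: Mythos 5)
Your proposal is correct and follows essentially the same route as the paper's sketch of Yamamoto's argument: the Lindstr\"om--Gessel--Viennot lattice-path model with shifted weights, cancellation of intersecting tuples, identification of non-intersecting tuples with semistandard tableaux, and the limit $N\to\infty$ producing the $\zeta^{\star}_{EZ}$ (resp.\ $\zeta_{EZ}$) entries. The only point worth flagging is that the ``standard'' tail-swapping cancellation is not automatic here because edge weights depend on which ribbon the edge belongs to; it is exactly the diagonal hypotheses $s_{ij}=z_{j-i}$, $x_{ij}=y_{j-i}$ (as in Lemma \ref{cancellationforL} and Example \ref{fig2fig3}) that make the swapped tuple carry the same weight, which you do acknowledge in your closing paragraph.
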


The point of the method of the Lindstr\"{o}m-Gessel-Viennot lemma is that the function in question can be described in terms of a lattice path model by the bijection between the semi-standard Young tableaux of shape $\lambda$ and non-intersecting weighted lattice paths. Indeed,
let $\mathcal{H}_{\lambda, 0}^{N}\subset \mathcal{H}_{\lambda}^{N}$ be the set of all $L=(l_1, \ldots, l_r)\in \mathcal{H}_{\lambda}^N$
such that any distinct pair of $l_i$ and $l_j$ has no intersection. We easily find $\mbox{type}(L)=\1$ for all $L\in \mathcal{H}_{\lambda, 0}^N$ where $\1$ is the identity element of $\mathfrak S_r$, and the above bijection gives
%identification between non-intersecting lattice paths weighted by the above rule and semi-standard Young tablaux of shape $\lambda$:
$$\zeta_{\lambda}({\pmb s} | {\pmb x})=\sum_{L\in \mathcal{H}_{\lambda, 0}^N}w_{\pmb s}^N(L).
$$

To derive the desired determinant formula, we need to consider the sum
%In fact, however, the following sums are considered to derive the determinant formula.
\begin{equation}\label{pathexpression}
\sum_{L\in \mathcal{H}_{\lambda}^N}{\rm{sgn}}({\rm{type}}(L))w_{\pmb s}^N(L),
\end{equation}
where ${\rm sgn}({\rm{type}}(L))$ is the signature of ${\rm{type}}(L)$.
Therefore, the necessary remaining task is to show that the sum over paths with intersections disappears.
As we can see in Figures $2$ and $3$ in Example \ref{fig2fig3},
since the diagonal components in ${\pmb s}\in W_{\lambda}$ appear in the same column in a lattice path model,
we find that there are pairs of paths with the same weight %$w^N_{{\pmb s}}(L)$ of $L\in \mathcal{H}^{N}_{\lambda}$ 
that differ from each other only by a sign, so that
we have the following lemma.

\begin{lem}\label{cancellationforL}
If ${\pmb s}\in W_{\lambda}^{\rm{diag}}$,
for any $L\in \mathcal{H}_{\lambda}^N\backslash \mathcal{H}_{\lambda, 0}^N$, 
there exists another element $\overline{L}\in \mathcal{H}_{\lambda}^N\backslash \mathcal{H}_{\lambda, 0}^N$ such that
\begin{align}\label{daggerzero}
{\rm{sgn}}({\rm{type}}(L))w_{\pmb s}^N(L)+{\rm{sgn}}({\rm{type}}(\overline{L}))w_{\pmb s}^N(\overline{L})=0.
\end{align}
Therefore, it holds that
$$
\sum_{L\in \mathcal{H}_{\lambda}^N\backslash \mathcal{H}_{\lambda, 0}^N}{\rm{sgn}}({\rm{type}}(L))w_{\pmb s}^N(L)=0.
$$
\end{lem}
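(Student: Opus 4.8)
The plan is to construct the involution $L\mapsto\overline L$ explicitly by a standard ``path-switching'' argument, mimicking the proof of Lemma~3.4 (or its analogue) in \cite{NPY18} but keeping careful track of the shifting parameters ${\pmb x}$. First I would fix $L=(l_1,\dots,l_r)\in\mathcal H^N_\lambda\setminus\mathcal H^N_{\lambda,0}$. Since $L$ has at least one intersecting pair, among all pairs $(i,j)$ with $i<j$ such that $l_i$ and $l_j$ meet, pick the one that is lexicographically smallest (say by smallest $i$, then smallest $j$), and among the intersection points of $l_i$ and $l_j$ pick a canonical one $P$ (for instance the first such point encountered along $l_i$). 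Define $\overline L$ to agree with $L$ in every path except $l_i$ and $l_j$, which are replaced by $\overline l_i$ = (the part of $l_i$ up to $P$) followed by (the part of $l_j$ after $P$), and $\overline l_j$ = (the part of $l_j$ up to $P$) followed by (the part of $l_i$ after $P$). One then checks the bookkeeping: $\overline L$ is again an $H$-pattern, $\mathrm{type}(\overline L)=\mathrm{type}(L)\circ(i\,j)$ (a transposition), so $\mathrm{sgn}(\mathrm{type}(\overline L))=-\mathrm{sgn}(\mathrm{type}(L))$; and the canonical choice of $(i,j)$ and $P$ is symmetric, so $\overline{\overline L}=L$, making this a genuine sign-reversing involution on $\mathcal H^N_\lambda\setminus\mathcal H^N_{\lambda,0}$. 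This immediately yields the displayed vanishing of the sum once the weight equality $w^N_{\pmb s}(L)=w^N_{\pmb s}(\overline L)$ is established.

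The heart of the matter, and the step that differs from the classical Schur-function argument, is the weight equality. Here I would argue as follows. The multiset of horizontal edges used by $l_i$ together with those used by $l_j$ is, as a set of edges in $\mathbb Z^2$, unchanged under the swap: $\overline l_i$ and $\overline l_j$ together traverse exactly the same horizontal edges as $l_i$ and $l_j$ together, because swapping tails at the common point $P$ only redistributes edges between the two paths. So $w^N_{\pmb s}(l_i)\,w^N_{\pmb s}(l_j)$ and $w^N_{\pmb s}(\overline l_i)\,w^N_{\pmb s}(\overline l_j)$ are products over the same collection of horizontal edges. The only subtlety is that the \emph{label} attached to a given horizontal edge depends not just on its row $j$ but on \emph{which box $(p,q)$ of which ribbon} it represents, i.e.\ on the position of that edge within its path; so a priori the same edge could receive weight $1/(j+x_{pq})^{s_{pq}}$ in $L$ but $1/(j+x_{p'q'})^{s_{p'q'}}$ in $\overline L$. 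This is precisely where the diagonal hypothesis ${\pmb s}\in W^{\rm diag}_\lambda$ — equivalently ${\pmb x}\in T^{\rm diag}(\lambda,\mathbb R_{\geq0})$, $s_{ij}=z_{j-i}$, $x_{ij}=y_{j-i}$ — enters: as noted in the text just before the lemma, boxes on the same diagonal of $\lambda$ sit in the same \emph{column} of the lattice-path picture, and a horizontal edge in column $c$ on row $j$ always corresponds to a box of content determined by $c$ (and $N$), hence to the same pair $(z_{\bullet},y_{\bullet})$ regardless of which path carries it. Therefore each individual horizontal edge contributes the identical factor in $L$ and in $\overline L$, and the two weights coincide.

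I expect the main obstacle to be making this last ``same column $\Rightarrow$ same content'' statement fully precise: one must verify that the $k$-th box of the ribbon $\theta_i$ associated (via $\tau_H$) to $L$ and the correspondingly-placed box of the ribbon associated to $\overline L$ really do lie on the same diagonal of $D(\lambda)$, which requires tracing through the bijection $\tau_H:\mathrm{Rim}^\lambda_H\to\mathfrak S^\lambda_H$ and the correspondence (from \cite[Lemma 3.4]{NPY18}) between horizontal edges of $l_i$ and boxes of $\theta_i$. A clean way to organize this is to work directly with the intrinsic rule ``the horizontal edge from $(c,j)$ to $(c+1,j)$ is weighted by $1/(j+y_{c-j'})^{z_{c-j'}}$'' for an appropriate fixed offset $j'$ depending only on $N$ and $\lambda$ — a rule that manifestly does not reference the path — and to note that both $L$ and $\overline L$ use exactly the same such edges. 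For the remaining non-diagonal paths $l_k$, $k\neq i,j$, there is nothing to check since they are untouched. Once this is in place, summing \eqref{daggerzero} over the orbits of the involution gives the stated identity, completing the proof.
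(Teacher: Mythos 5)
Your proposal is correct and follows essentially the same route as the paper: the standard Lindstr\"{o}m--Gessel--Viennot tail-swapping involution at a canonically chosen intersection point, with the sign flipping via a transposition and the weight equality reduced to the observation that boxes on the same diagonal of $\lambda$ correspond to horizontal edges in the same column of the lattice-path model, so that under $s_{ij}=z_{j-i}$ and $x_{ij}=y_{j-i}$ each edge carries the same factor in $L$ and $\overline{L}$. The paper only sketches this (pointing to Figures 2 and 3 and the analogous argument in \cite{NPY18}); you have correctly identified both the involution and the one step where the Hurwitz parameters ${\pmb x}$ must also be taken diagonal for the cancellation to go through.
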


By Lemma \ref{cancellationforL}, we see that $\zeta_{\lambda}({\pmb s} | {\pmb x})$ 
is equal to \eqref{pathexpression}.    
%Since $w_{\pmb s}^N(L)$ in \eqref{pathexpression} can be written in terms of multiple zeta-star-functions,
%we obtain \eqref{JT-formula}.
Taking the limit $N\to \infty$, we see that the resulting expression can be written in terms of multiple zeta-star functions, hence we obtain \eqref{JT-formula}.
The similar discussion holds for $E$-type.

\begin{example}\label{fig2fig3}
In case of $\lambda=(3,2)$, for ${\pmb s}=\ytableausetup{boxsize=normal}
  \begin{ytableau}
  a & b &c \\
  d & e
 \end{ytableau}
$, Figures \ref{cap2} and \ref{cap3} show the corresponding path models $L$ and $\overline{L}$ for different $H$-rim decomposition, respectively.
%%%%%%%%%%%%%%%%%%%%%%%%%%%%%%%%%%%
\begin{figure}[ht]
\begin{center}
 \begin{tikzpicture} 
  \node at (0.5,1) {$1$};
  \node at (0.5,2) {$2$};
  \node at (0.5,3) {$3$};
  \node at (0.5,4) {$4$};
  \node at (1,0.5) {$l_2$};  
   \node at (2,0.5) {$l_1$};
  \node at (2.5,2.2) {{\small$a$}};
  \node at (3.5,3.2) {{\small$b$}};
    \node at (4.5,3.2) {{\small$c$}};
      \node at (1.5,3.2) {{\small$d$}};
        \node at (2.5,3.2) {{\small$e$}};
    \node at (1,1) {$\bullet$};
   \node at (1,2) {$\bullet$};
   \node at (1,3) {$\bullet$};
   \node at (1,4) {$\bullet$};    
   \node at (2,1) {$\bullet$};
   \node at (2,2) {$\bullet$};
   \node at (2,3) {$\bullet$};  
   \node at (2,4) {$\bullet$};  
   \node at (3,1) {$\bullet$};
   \node at (3,2) {$\bullet$};
   \node at (3,3) {$\bullet$};  
  \node at (3,4) {$\bullet$};  
   \node at (4,1) {$\bullet$};
   \node at (4,2) {$\bullet$};
   \node at (4,3) {$\bullet$};  
   \node at (4,4) {$\bullet$};  
   \node at (5,1) {$\bullet$};
   \node at (5,2) {$\bullet$};
   \node at (5,3) {$\bullet$};  
   \node at (5,4) {$\bullet$};  
 \draw(2,1) -- (2,2) -- (3,2) -- (3,3) -- (3,4);
 \draw[dashed] (1,1) -- (1,2) -- (1,3) -- (2,3) -- (3,3) -- (4,3) -- (5,3) --  (5,4);
 \end{tikzpicture}
\end{center}
\ \\[-10pt]
\caption{$L=(l_1,l_2)$}\label{cap2}
\end{figure}

\begin{figure}[ht]
\begin{center}
 \begin{tikzpicture} 
  \node at (0.5,1) {$1$};
  \node at (0.5,2) {$2$};
  \node at (0.5,3) {$3$};
  \node at (0.5,4) {$4$};
  \node at (1,0.5) {$\overline{l_2}$};  
   \node at (2,0.5) {$\overline{l_1}$};
     \node at (2.5,2.2) {{\small$e$}};
  \node at (3.5,3.2) {{\small$b$}};
    \node at (4.5,3.2) {{\small$c$}};
      \node at (1.5,3.2) {{\small$d$}};
        \node at (2.5,3.2) {{\small$a$}};
   \node at (1,1) {$\bullet$};
   \node at (1,2) {$\bullet$};
   \node at (1,3) {$\bullet$};
   \node at (1,4) {$\bullet$};    
   \node at (2,1) {$\bullet$};
   \node at (2,2) {$\bullet$};
   \node at (2,3) {$\bullet$};  
   \node at (2,4) {$\bullet$};  
   \node at (3,1) {$\bullet$};
   \node at (3,2) {$\bullet$};
   \node at (3,3) {$\bullet$};  
  \node at (3,4) {$\bullet$};  
   \node at (4,1) {$\bullet$};
   \node at (4,2) {$\bullet$};
   \node at (4,3) {$\bullet$};  
   \node at (4,4) {$\bullet$};  
   \node at (5,1) {$\bullet$};
   \node at (5,2) {$\bullet$};
   \node at (5,3) {$\bullet$};  
   \node at (5,4) {$\bullet$};  
 \draw(2,1) -- (2,2) -- (3,2) -- (3,3) -- (4,3) -- (5,3) --  (5,4);
 \draw[dashed] (1,1) -- (1,2) -- (1,3) -- (2,3) -- (3,3) -- (3,4);
 \end{tikzpicture}
\end{center}
\ \\[-10pt]
\caption{$\overline{L}=(\overline{l_1},\overline{l_2})$}\label{cap3}
\end{figure}
\noindent
Since ${\rm type}(L)=(12)$ and ${\rm type}(\overline{L})={\1}$ so that ${\rm sgn}({\rm type}(L))=-{\rm sgn}({\rm type}(\overline{L}))$, if ${\pmb s}\in W_{\lambda}^{\rm{diag}}$ and 
${\pmb x}\in T^{\rm{diag}}(\lambda, {\mathbb R}_{\geq 0})$, that is $a=e$ and $x_{11}=x_{22}$, then we get
\begin{equation}\label{disappear}
{\rm{sgn}}({\rm{type}}(L))w_{\pmb s}^N(L)+{\rm{sgn}}({\rm{type}}(\overline{L}))w_{\pmb s}^N(\overline{L})=0.
\end{equation}
\end{example}

\noindent

%%%%%%%%%%%%%%%%%%%%%
\subsection{A result by Minoguchi}
%%%%%%%%%%%%%%%%%%%%

Let $I(j)=\{(k, \ell)\in D(\lambda) | \ell-k=j\}$,
$S_j$ be the set of all permutations on $I(j)$,
and $Z(\lambda)=\{j\in {\mathbb Z} | I(j)\neq\emptyset\}$.
For any function $f$ defined on $T(\lambda,X)$, we define
\begin{align}\label{diag_def}
\sum_{diag}f((t_{k,\ell})_{(k,\ell)\in D(\lambda)})
\quad:=\sum_{\substack{\sigma_j\in {\frak S}_{|I(j)|} \\ (j\in Z(\lambda))}}
f\left((t_{\sigma_j(k,\ell)})_{(k,\ell)\in I(j), j\in Z(\lambda)}
\right)
\end{align}
where $(t_{k,\ell})_{(k,\ell)\in D(\lambda)}\in T(\lambda,X)$ and
$|I(j)|$ is the cardinality of $I(j)$.\\
%%%%%%%%%%%%%%%%%%%%%%%%%%%%%%%%%%%%%%%
%%%%%%%%\UTF{00E4}\UTF{00BB}?\UTF{00E5}\UTF{0089}\UTF{008D}\UTF{00E3}\UTF{0081}\UTF{00AE}\UTF{00E3}\UTF{0082}\UTF{0082}\UTF{00E3}\UTF{0081}\UTF{00AE}%%%%%%%%%%%%%%%%%%%
%{\color{red}
%For $Z(\lambda)=\{j\in {\mathbb Z} | I(j)=\emptyset\}$,
%we define
%$$\sum_{diag}=\sum_{\substack{\sigma_j\in \mathfrak{S_j}\\ j\in Z(\lambda)}}\prod_{j\in Z(\lambda)}\sigma_j
%$$
%for $\mathfrak S_j$ being the set of permutation of the elements of $I(j)$.
%}
\begin{example}
For $\lambda=(4, 3)$,
\begin{align*}
 I(-1)&=\{(k, \ell)\in D(\lambda) | \ell-k=-1\}=\{(2,1)\},\\
 I(0)&=\{(k, \ell)\in D(\lambda) | \ell-k=0\}=\{(1,1), (2,2)\},\\
 I(1)&=\{(k, \ell)\in D(\lambda) | \ell-k=1\}=\{(1,2), (2,3)\},\\
 I(2)&=\{(k, \ell)\in D(\lambda) | \ell-k=2\}=\{(1,3)\},\\
 I(3)&=\{(k, \ell)\in D(\lambda) | \ell-k=3\}=\{(1,4)\}.
 \end{align*}
This leads to 
\begin{align*}
&S_{-1}=S_{2}=S_{3}\cong {\frak S}_1=\{\1\},\\
&S_0\cong {\frak S}_2=\{\1, \sigma_1\},\; S_1\cong {\frak S}_2=\{\1, \sigma_1\},
\end{align*}
where $\sigma_1$ implies the substitution of the first and second components of $I(j)$ for any $j$. 
Therefore, 
\begin{align*}
\sum_{diag} & f\left(
\ytableausetup{boxsize=normal}
  \begin{ytableau}
   a & b & c & d\\
   e & f & g
  \end{ytableau}\right)=&\\
 &\quad f\left(\ytableausetup{boxsize=normal}
  \begin{ytableau}
   a & b & c & d\\
   e & f & g
  \end{ytableau}\right) &(\1, \1, \1, \1, \1)\in S_{-1}\times S_0\times S_1\times S_2\times S_3\\
  &+
  f\left(\ytableausetup{boxsize=normal}
  \begin{ytableau}
   f & b & c & d\\
   e & a & g
  \end{ytableau}\right) &(\1, \sigma_1, \1, \1, \1)\in S_{-1}\times S_0\times S_1\times S_2\times S_3\\
  &+
 f \left(\ytableausetup{boxsize=normal}
  \begin{ytableau}
   a & g & c & d\\
   e & f & b
  \end{ytableau}\right) &(\1, \1, \sigma_1, \1, \1)\in S_{-1}\times S_0\times S_1\times S_2\times S_3\\
 &+
 f \left(\ytableausetup{boxsize=normal}
  \begin{ytableau}
   f & g & c & d\\
   e & a & b
  \end{ytableau}\right) &(\1, \sigma_1, \sigma_1, \1, \1)\in S_{-1}\times S_0\times S_1\times S_2\times S_3.
  \end{align*}
\end{example}

Now we present the result of Minoguchi \cite{Minoguchi} on the extended Jacobi-Trudi
formula for Schur-Hurwitz multiple zeta-functions.

For $H(\lambda):=\{(i, j)\in D(\lambda) | j-i\in \{\lambda_i-i | 1\leq i \leq r\}\}$, we define
$$W_{\lambda, H}=\left\{{\pmb s}=(s_{ij}\in T({\lambda}, \mathbb{C}) \left| 
\begin{array}{ll}
\Re(s_{ij})\geq 1& \mbox{ for all } (i,j)\in D(\lambda)\backslash H(\lambda)\\
\Re(s_{ij})>1 & \mbox{ for all } (i,j)\in H(\lambda)
\end{array}\right.\right\}.
$$
Then $\sum_{diag}\zeta_{\lambda}({\pmb s} | {\pmb x})$ converges absolutely if ${\pmb s}\in W_{\lambda, H}$.

As we see in Example \ref{fig2fig3}, if ${\pmb s}\not\in  W_{\lambda}^{diag}$ or ${\pmb x}\not\in  T^{diag}({\lambda}, {\mathbb R}_{\geq 0})$, then  \eqref{disappear} does not hold. However, by summing over the permutations, we again encounter the disappearing phenomenon.
For example,
\begin{align*}
\sum_{diag}\zeta_{\lambda}({\pmb s} | {\pmb x})&=
\zeta_{\lambda}\left(\ytableausetup{boxsize=normal}
  \begin{ytableau}
  a & b &c \\
  d & e
 \end{ytableau}
 \left| 
 \ytableausetup{boxsize=normal}
  \begin{ytableau}
  x_{11} & x_{12} & x_{13} \\
  x_{21} & x_{22}
 \end{ytableau}
\right.\right)
+
\zeta_{\lambda}\left(\ytableausetup{boxsize=normal}
  \begin{ytableau}
  e & b &c \\
  d & a
 \end{ytableau}
 \left| 
 \ytableausetup{boxsize=normal}
  \begin{ytableau}
  x_{22} & x_{12} & x_{13} \\
  x_{21} & x_{11}
 \end{ytableau}
\right.\right)\\
&=0
\end{align*}
in the case of Example \ref{fig2fig3}.
This type of result can be derived generally
for the case $\lambda=(m, n, \{1\}^{X-2})$ ($X\geq 2$) by 
the induction argument on $j$, quite similar to that in \cite{NakasujiTakeda},
 and hence the following theorem holds:

 \begin{thm}\label{extendJTminoguchi}
 {\rm (\cite[Theorem 9.2]{Minoguchi})}
For $X\ge2$, $\lambda=(m,n,\{1\}^{X-2})$ and $(s_{ij})\in W_{\lambda,H}$ with any integers $m\ge n\ge1$, it holds that
\begin{align*}
&\sum_{diag}\zeta_{\lambda}\left(\begin{ytableau}
  s_{11}&s_{12}&\cdots&\cdots&\cdots&s_{1m}\\
  s_{21}&s_{22}&\cdots&s_{2n}\\
s_{31}\\
\vdots\\
s_{X1}
\end{ytableau}\left| 
\begin{ytableau}
  x_{11}&x_{12}&\cdots&\cdots&\cdots&x_{1m}\\
  x_{21}&x_{22}&\cdots&x_{2n}\\
x_{31}\\
\vdots\\
x_{X1}
\end{ytableau}
\right.\right)\\
&=\sum_{diag}\begin{vmatrix}
\zetas(\underline s_{\,11}^{\,1m} | \underline x_{\,11}^{\,1m})&
*&*&\cdots&*\\
\zetas(\underline s_{\,11}^{\,1(n-1)}|\underline x_{\,11}^{\,1(n-1)})&\zetas(\underline s_{\,21}^{\,2n}|\underline x_{\,21}^{\,2n})&*&\cdots&*\\
0&1&\zetas(s_{31}|x_{31})&\cdots&*\\
\vdots&\ddots&\ddots&\ddots&\vdots\\
0&\cdots&0&1&\zetas(s_{X1}|x_{X1})
\end{vmatrix},
\end{align*}
where $(1,2)$-component of the matrix on the right-hand side is 
$$\zetas(\underline s_{\,21}^{\,2n}, s_{\,1n}^{\,1m}|\underline x_{\,21}^{\,2n}, x_{\,1n}^{\,1m}),$$
$(1,j)$-component  ($ j\geq 3$) is 
$$
\zetas(\underline s_{\,j1}^{\,31},\underline s_{\,21}^{\,2n}
,\underline s_{\,1n}^{\,1m}|\underline x_{\,j1}^{\,31},\underline x_{\,21}^{\,2n}
,\underline x_{\,1n}^{\,1m}),
$$
$(2,j)$-component ($ j\geq 3$) is
$\zetas(\underline s_{\,j1}^{\,31}, \underline s_{\,21}^{\,2n}|\underline x_{\,j1}^{\,31}, 
\underline x_{\,21}^{\,2n})$ and $(i,j)$-component $(i\geq 3)$ is 
$\zetas(\underline s_{\,j1}^{\,i1}|\underline x_{\,j1}^{\,i1})$.
\end{thm}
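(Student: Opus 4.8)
The plan is to adapt the inductive argument of Nakasuji--Takeda (Theorem~\ref{extendJT}) to the Hurwitz setting, carrying the shifting parameters ${\pmb x}$ alongside ${\pmb s}$ at every step. The base of the induction is the case $X=2$, i.e.\ $\lambda=(m,n)$, where the diagram has only two rows and the claimed determinant collapses to a single $1\times 1$ (or $2\times 2$) identity. Here one starts from the $H$-rim-decomposition path model of Section~3: by the bijection recalled after Theorem~\ref{JTYamamoto}, $\zeta_\lambda({\pmb s}\mid{\pmb x})=\sum_{L\in\mathcal H^N_{\lambda,0}}w^N_{\pmb s}(L)$, and the signed sum $\sum_{L\in\mathcal H^N_\lambda}\mathrm{sgn}(\mathrm{type}(L))w^N_{\pmb s}(L)$ expands as a determinant of $\zeta^\star_{EZ}$-factors (with Hurwitz shifts) by the Lindstr\"om--Gessel--Viennot lemma. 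The key point, exactly as in Example~\ref{fig2fig3}, is that the intersecting configurations no longer cancel individually once ${\pmb s}\notin W^{\rm diag}_\lambda$; but after applying $\sum_{diag}$ — i.e.\ summing over all permutations $\sigma_j\in\mathfrak S_{|I(j)|}$ of the entries on each diagonal $I(j)$, of \emph{both} ${\pmb s}$ and ${\pmb x}$ simultaneously — the swapped pair $(L,\overline L)$ produces weights that become genuinely equal, so \eqref{disappear} is restored term-by-term under the extra summation. This is the substitute for Lemma~\ref{cancellationforL} in the non-diagonal case.

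For the inductive step one fixes $\lambda=(m,n,\{1\}^{X-2})$ and peels off the bottom box $(X,1)$, relating $\sum_{diag}\zeta_{(m,n,\{1\}^{X-2})}$ to $\sum_{diag}\zeta_{(m,n,\{1\}^{X-3})}$. Concretely, I would split the sum over $\mathrm{SSYT}$ according to the value and position of the entry $s_{X1}$ (equivalently, split the outermost path $l_X$ in the $H$-pattern), which yields a recursion of the form: the new $\zeta^\star(s_{X1}\mid x_{X1})$-column times the old determinant, minus correction terms that reassemble into the bordered determinant of the statement — the first two rows acquiring the long $\zeta^\star$-entries with the concatenated argument strings $\underline s^{\,31}_{\,X1},\underline s^{\,2n}_{\,21},\underline s^{\,1m}_{\,1n}$ (and the parallel ${\pmb x}$-strings), and the lower rows being the shift-off-diagonal $1$'s and the single-variable $\zeta^\star(s_{j1}\mid x_{j1})$. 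Expanding the target determinant along its last row or last column and matching with this recursion closes the induction; at each stage the cancellation of intersecting path configurations is handled by the same ``$\sum_{diag}$ restores pairing'' mechanism as in the base case. Finally one lets $N\to\infty$ to pass from finite $\zeta_{EZ}$-truncations to the $\zeta^\star=\zeta^\star_{EZ}$ appearing in the formula, as in the proof of Theorem~\ref{JTYamamoto}.

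The main obstacle is bookkeeping the Hurwitz shifts through the rim/path correspondence: in the weight $w^N_{\pmb s}(l_i)$ the factor attached to the $k$-th horizontal edge is $1/(j+x_{pq})^{s_{pq}}$ where $(p,q)$ is the $k$-th box of the ribbon $\theta_i$, so the shift $x_{pq}$ is tied to the \emph{box} and hence moves with ${\pmb s}$ under the permutations $\sigma_j$ — one must verify that $\sum_{diag}$ acting jointly on $({\pmb s},{\pmb x})$ is exactly the operation under which the $(L,\overline L)$ weights coincide, and that the concatenated argument lists in the off-diagonal matrix entries pair the right $x$-string to the right $s$-string in the right order. Once this compatibility is checked, the combinatorial skeleton is identical to \cite{NakasujiTakeda} and \cite{Minoguchi}, and the remaining computations are routine.
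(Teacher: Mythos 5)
Your strategy matches the paper's own sketch: the proof rests on exactly the two ingredients you identify, namely that the failed pairwise cancellation of intersecting $H$-patterns (as in Example~\ref{fig2fig3}) is restored once $\sum_{diag}$ permutes the pairs $(s_{ij},x_{ij})$ jointly along each diagonal, followed by the Nakasuji--Takeda-style induction for $\lambda=(m,n,\{1\}^{X-2})$ with the limit $N\to\infty$ converting truncated path weights into $\zeta^{\star}$-factors. The only slip is cosmetic: for the base case $X=2$ the determinant in the statement is genuinely $2\times 2$, not $1\times 1$.
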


%%%%%%%%%%%%%%%%%%%%%%%%%%%
%%%%%%%%%%%%%%%%%%%%%%%%%%%%%
%%%%%%%%%%%%%%%%%%%%%%%%%%%%%%

%%%%%%%%%%%%%%%%%%%%%%%%%%%%%%%%%%%%%%%%%%%%%%%%%%%%%%%%%%%%%%%%%%%%%%%%%%%%%%%%%%
\section{Giambelli formula for Schur-Hurwitz multiple zeta-functions}\label{sec-4}
%%%%%%%%%%%%%%%%%%%%%%%%%%%%%%%%%%%%%%%%%%%%%%%%%%%%%%%%%%%%%%%%%%%%%%%%%%%%%%%%%%%

Now we are going into the main body of the present paper.
Several new results will be proved in the following sections.
First, in this section, we give Giambelli-type formulas for Schur-Hurwitz multiple
zeta-functions, by the method quite similar to that developed in \cite{MN4}.

\subsection{Frobenius notation}
For a partition $\lambda=(\lambda_1,\lambda_2,\ldots)$, let $\lambda'=(\lambda_1',\lambda_2',\ldots)$ be its
conjugate, and
we define two sequences of indices
$p_1, \ldots, p_N$ and $q_1, \ldots, q_N$ by $p_i=\lambda_i-i$ and $q_i=\lambda'_i-i$ for $1\leq i \leq N$
where $N$ is the number of the main diagonal entries of the Young diagram of $\lambda$.
We sometimes write $\lambda=(p_1, \ldots, p_N | q_1, \ldots, q_N)$, 
the Frobenius notation of $\lambda$ (see \cite[Section 1.1]{Mac95}).
Let $\alpha(\lambda)\subset \lambda$ be the set of all cells 
which are to the right of the northeast boundary of the main diagonal in the Young diagram of shape $\lambda$ and
$\beta(\lambda)\subset \lambda$ be the set of all cells 
which are to the left of the southwest boundary of the main diagonal in the Young diagram of shape $\lambda$.
We denote them by $\alpha(\lambda)=(p_1, \ldots, p_N)$, $\beta(\lambda)=(q_1, \ldots, q_N)$. 
Moreover, 
let $D^M(\lambda)$ be the set of all cells 
which are on the main diagonal in the Young diagram of shape $\lambda$.
\begin{example}
For $\lambda=(3,3,2,1)$, the Young diagram of shape $\lambda$ is depicted as
$$
%\ytableausetup{boxsize=normal}
  \ydiagram[]{3,3,2,1}\; .
$$
$\alpha(\lambda)=(2,1)$, depicted as 
$
\ydiagram{2,1+1}
$\; ,
and $\beta(\lambda)=(3,1)$, 
which is depicted as
$
\ydiagram{1,2,1}
$\; , and so the Frobenius notation of this example is $(2,1|3,1)$.

\end{example}
%%%
%%%%%%%%%%%%%%%%%%%%%%%%%%%%%%%%%%%%%%%%%%%%%%%%%%
%%%%%%%%%%%%%%%%%%%%%%%%%%%%%%%%%%%%%%%%%%%%%%%%%%
%%%%%%%%%%%%%%%%%%%%%%%%%%%%%%%%%%%%%%%%%%%%%%%%%%
%%%%%%%%%%%%%%%%%%%%%%%%%%%%%%%%%%%%%%%%%%%%%%%%%%
\subsection{$\sigma$-tableau}
For a given permutation $\sigma\in {\frak S}_N$ and a Young tableau 
$T=(t_{i,j})$ of shape $\lambda$, 
we call $T$ a $\sigma$-tableau if it satisfies the following conditions:
%\begin{itemize}

(I) The entries of $T$ are weakly increasing along the rows in $\alpha(\lambda)$.

(II) The entries of $T$ are strictly increasing down the columns in $\beta(\lambda)\cup D^M(\lambda)$.

(III) $t_{\sigma(i), \sigma(i)}\leq t_{i, i+1}$ for $i=1, 2, \ldots, d$ whenever $i+1\leq \lambda_i$. \\

%Let $T$ be a $\sigma$-tableau, and define
%\begin{align*}
%T_k(\sigma)=
 % \begin{array}{|c|c|c|c|}
 %\hline
 %t_{\sigma(k),\sigma(k)} & t_{k,k+1} & \cdots & t_{k,k+p_k}\\
 %\hline
 %t_{\sigma(k),\sigma(k)+1}\\
 %\cline{0-0}
% \vdots\\
% \cline{0-0}
% t_{\sigma(k),\sigma(k)+q_{\sigma(k)}}\\
% \cline{0-0}
%  \end{array}
%  \; .
%\end{align*}

%\noindent
For ${\pmb s}\in T({\lambda}, {\mathbb C})$ and $T\in T({\lambda}, {\mathbb R})$, let
$$Z_{\lambda}({\pmb s}, T):=\prod_{(i, j)\in D(\lambda)}\frac{1}{t_{ij}^{s_{ij}}}.
$$
We easily find 
$$\displaystyle{\zeta_{\lambda}({\pmb s})=\sum_{M\in SSYT_{\lambda}}Z_{\lambda}({\pmb s}, M)}
\quad \displaystyle{\zeta_{\lambda}({\pmb s}|{\pmb x})=\sum_{M\in SSYT_{\lambda}}Z_{\lambda}({\pmb s}, M+{\pmb x})}.
$$

\begin{lem}\label{sigmatab}
{\rm{(\cite[Lemma 5.2]{MN4})}}
Let $T=(t_{ij})$ be a $\sigma$-tableau of shape $\lambda=(p_1, \ldots, p_N | q_1, \ldots, q_N)$. 
Then, 
\begin{align*}
T_k(\sigma)=
  \begin{array}{|c|c|c|c|}
 \hline
 t_{\sigma(k),\sigma(k)} & t_{k,k+1} & \cdots & t_{k,k+p_k}\\
 \hline
 t_{\sigma(k),\sigma(k)+1}\\
 \cline{0-0}
 \vdots\\
 \cline{0-0}
 t_{\sigma(k),\sigma(k)+q_{\sigma(k)}}\\
 \cline{0-0}
  \end{array}
  \; 
\end{align*}
is in
%$T_k(\sigma)\in 
${\rm SSYT}(p_k | q_{\sigma(k)})$ for $1\leq k\leq N$,
and  for ${\pmb s}\in W_{\lambda}^{\rm diag}$, 
$$
Z_{\lambda}({\pmb s}, T)=\prod_{k=1}^N Z({\pmb s}_{k,\sigma(k)}^F, T_k(\sigma)),
$$
where ${\pmb s}_{k,\sigma(k)}^F$ is the part of ${\pmb s}$ corresponding to $T_k(\sigma)$ in the same notation as in Theorem \ref{Giambelli0}.
\end{lem}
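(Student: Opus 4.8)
The plan is to reduce Lemma~\ref{sigmatab} directly to its non-shifted counterpart \cite[Lemma 5.2]{MN4}, observing that the shifting parameters ${\pmb x}$ play no role in the combinatorial structure. First I would recall what a $\sigma$-tableau is: conditions (I)--(III) are purely order-theoretic conditions on the entries $t_{ij}$ of $T$, and they say exactly that (i) along the arm $\alpha(\lambda)$ the rows weakly increase, (ii) down the leg $\beta(\lambda)$ together with the main diagonal $D^M(\lambda)$ the columns strictly increase, and (iii) the ``hook-compatibility'' inequality $t_{\sigma(k),\sigma(k)}\le t_{k,k+1}$ holds. None of this mentions ${\pmb s}$ or ${\pmb x}$; these conditions are invariant under passing from $T$ to $T+{\pmb x}$ only when ${\pmb x}$ is constant on diagonals, but for the lemma as stated $T$ itself is the object carrying the entries, so the combinatorial claim that $T_k(\sigma)$ is a hook semi-standard tableau in ${\rm SSYT}(p_k\mid q_{\sigma(k)})$ is literally the statement of \cite[Lemma 5.2]{MN4} and requires no change.

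The one genuinely new ingredient is the factorization of the weight $Z_\lambda({\pmb s},T)=\prod_{k=1}^N Z({\pmb s}^F_{k,\sigma(k)},T_k(\sigma))$ for ${\pmb s}\in W_\lambda^{\rm diag}$. Here I would argue as follows. By definition $Z_\lambda({\pmb s},T)=\prod_{(i,j)\in D(\lambda)}t_{ij}^{-s_{ij}}$, so it suffices to check that as $k$ ranges over $1,\dots,N$ the multisets of boxes of $T_k(\sigma)$, \emph{together with the exponents attached to them}, partition $D(\lambda)$ and its exponents. The set of boxes of $T_k(\sigma)$ consists of the diagonal box $(\sigma(k),\sigma(k))$, the arm boxes $(k,k+1),\dots,(k,k+p_k)$, and the leg boxes $(\sigma(k),\sigma(k)+1),\dots,(\sigma(k),\sigma(k)+q_{\sigma(k)})$. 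As $k$ runs over $1,\dots,N$, the arm boxes $(k,k+1),\dots,(k,k+p_k)$ sweep out exactly $\alpha(\lambda)$ (each arm-box of $\lambda$ lies in a unique row $k\le N$), the leg boxes sweep out $\beta(\lambda)$ (reindexed by $\sigma$, a bijection of $\{1,\dots,N\}$), and the diagonal boxes $(\sigma(k),\sigma(k))$ sweep out $D^M(\lambda)$; these three pieces are disjoint and their union is $D(\lambda)$. This is precisely the decomposition already used in \cite{MN4}. The new point is only that the exponent attached to box $(i,j)$ in $Z({\pmb s}^F_{k,\sigma(k)},T_k(\sigma))$ agrees with $s_{ij}$: for arm and leg boxes this is immediate from the definition of ${\pmb s}^F_{k,\sigma(k)}$ as ``the part of ${\pmb s}$ corresponding to $T_k(\sigma)$''; for the diagonal box $(\sigma(k),\sigma(k))$ the exponent prescribed by ${\pmb s}^F_{k,\sigma(k)}$ is $z_0$, and here the hypothesis ${\pmb s}\in W_\lambda^{\rm diag}$ enters crucially — since $s_{\sigma(k),\sigma(k)}=z_{\sigma(k)-\sigma(k)}=z_0$ for every $k$, all diagonal exponents coincide and equal $z_0$, so the exponent on $t_{\sigma(k),\sigma(k)}$ is $z_0=s_{\sigma(k),\sigma(k)}$ regardless of which diagonal box the $k$-th hook happens to use.

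I would then conclude by multiplying: $\prod_{k=1}^N Z({\pmb s}^F_{k,\sigma(k)},T_k(\sigma))=\prod_{k=1}^N\Big(t_{\sigma(k),\sigma(k)}^{-z_0}\prod_{a=1}^{p_k}t_{k,k+a}^{-s_{k,k+a}}\prod_{b=1}^{q_{\sigma(k)}}t_{\sigma(k),\sigma(k)+b}^{-s_{\sigma(k),\sigma(k)+b}}\Big)$, and by the box-partition above this product telescopes into $\prod_{(i,j)\in D(\lambda)}t_{ij}^{-s_{ij}}=Z_\lambda({\pmb s},T)$. The membership $T_k(\sigma)\in{\rm SSYT}(p_k\mid q_{\sigma(k)})$ follows from (I) (weak increase along the arm), (II) (strict increase down the leg and the diagonal, which gives in particular $t_{\sigma(k),\sigma(k)}<t_{\sigma(k),\sigma(k)+1}$), and (III) (the inequality $t_{\sigma(k),\sigma(k)}\le t_{k,k+1}$ guaranteeing the arm starts no smaller than the corner).

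The main obstacle — really the only subtle point — is the bookkeeping in the previous paragraph: one must verify that the assignment $k\mapsto$ (diagonal box $(\sigma(k),\sigma(k))$, arm of row $k$, leg of column $\sigma(k)$) is a genuine bijective partition of $D(\lambda)$, in particular that no arm box is ever reused and that the reindexing of legs by $\sigma$ does not double-count. Since $\sigma$ is a bijection of $\{1,\dots,N\}$ and arms are indexed by their (unique) row while legs are indexed by their (unique) column, this holds, but it is the place where a careless argument could go wrong; everything else is a transcription of \cite[Lemma 5.2]{MN4} with the observation that introducing ${\pmb x}$ — or equivalently replacing $M$ by $M+{\pmb x}$ as in the displayed identity $\zeta_\lambda({\pmb s}\mid{\pmb x})=\sum_{M\in SSYT_\lambda}Z_\lambda({\pmb s},M+{\pmb x})$ preceding the lemma — changes none of the combinatorics.
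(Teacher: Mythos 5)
The paper states this lemma without proof, merely citing \cite[Lemma 5.2]{MN4}, so there is no in-paper argument to compare against; your direct verification is correct and is precisely the E{\v g}ecio{\v g}lu--Remmel-style bookkeeping (arms indexed by rows, legs by columns permuted by $\sigma$, diagonal cells matched via $\sigma$, with the content parametrization $s_{ij}=z_{j-i}$ making the exponents consistent) on which that reference is based. One small remark: the displayed leg entries $t_{\sigma(k),\sigma(k)+1},\ldots,t_{\sigma(k),\sigma(k)+q_{\sigma(k)}}$ in the statement are a typo for $t_{\sigma(k)+1,\sigma(k)},\ldots,t_{\sigma(k)+q_{\sigma(k)},\sigma(k)}$ (the cells \emph{below} the diagonal in column $\sigma(k)$); you correctly treat the legs as columns in your partition of $D(\lambda)$, which is the intended reading and is what makes the three families disjoint.
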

Let
$${\mathbb X}:=\{(\sigma, T) | \sigma\in {\frak S}_N, T : \sigma \mbox{-tableau of shape} \; \lambda\}.
$$
 We see that 
$\mathbb{X}={\mathbb X}^{\dagger}\cup {\mathbb X}^{\dagger\dagger}$, 
where ${\mathbb X}^{\dagger}:=\{(\1, T) | T\in {\rm SSYT}({\lambda})\}$ with
$\1$ being the identity permutation,
and ${\mathbb X}^{\dagger\dagger}={\mathbb X}\setminus{\mathbb X}^{\dagger}$.

\begin{lem}\label{cancellation}
{\rm{(\cite[Lemma 5.3]{MN4})}}
For any $(\sigma,T)\in {\mathbb X}^{\dagger\dagger}$, 
there exists another element $(\sigma',T')\in {\mathbb X}^{\dagger\dagger}$ such that
\begin{align}\label{daggerzero}
{\rm sgn}(\sigma)Z_{\lambda}({\pmb s},T)+{\rm sgn}(\sigma')Z_{\lambda}({\pmb s},T')=0.
\end{align}
Therefore, it holds that
\begin{align*}
\sum_{(\sigma, T)\in {\mathbb X}}{\rm sgn}(\sigma) Z_{\lambda}({\pmb s}, T)=\sum_{(\1, T)\in {\mathbb X}} {\rm sgn}(\1)Z_{\lambda}({\pmb s}, T).
\end{align*}

\end{lem}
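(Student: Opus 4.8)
The plan is to prove the cancellation by producing an explicit sign‑reversing involution $\Phi$ on ${\mathbb X}^{\dagger\dagger}$ that preserves $Z_\lambda({\pmb s},\cdot)$, following the combinatorial proof of the Giambelli identity of E{\v g}ecio{\v g}lu and Remmel \cite{EgeRem88} as adapted to the multiple‑zeta setting in \cite{MN4}. Once $\Phi$ is in hand with $\Phi\circ\Phi=\mathrm{id}$, ${\rm sgn}(\sigma)=-{\rm sgn}(\sigma')$, and $Z_\lambda({\pmb s},T)=Z_\lambda({\pmb s},T')$ whenever $\Phi(\sigma,T)=(\sigma',T')$, the vanishing relation ${\rm sgn}(\sigma)Z_\lambda({\pmb s},T)+{\rm sgn}(\sigma')Z_\lambda({\pmb s},T')=0$ holds on every $\Phi$‑orbit, so $\sum_{(\sigma,T)\in{\mathbb X}^{\dagger\dagger}}{\rm sgn}(\sigma)Z_\lambda({\pmb s},T)=0$; since ${\mathbb X}={\mathbb X}^{\dagger}\sqcup{\mathbb X}^{\dagger\dagger}$ and ${\mathbb X}^{\dagger}=\{(\1,T)\mid T\in{\rm SSYT}(\lambda)\}$ with ${\rm sgn}(\1)=1$, the asserted identity is then immediate, and combined with Lemma \ref{sigmatab} it is the combinatorial core of the Giambelli formula $\zeta_\lambda({\pmb s})=\det(\zeta_{i,j})$.

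First I would invoke Lemma \ref{sigmatab}: it attaches to each $(\sigma,T)\in{\mathbb X}$ the hook tableaux $T_1(\sigma),\dots,T_N(\sigma)$ with $T_k(\sigma)\in{\rm SSYT}(p_k | q_{\sigma(k)})$, and gives $Z_\lambda({\pmb s},T)=\prod_{k=1}^N Z({\pmb s}_{k,\sigma(k)}^F,T_k(\sigma))$ for ${\pmb s}\in W_\lambda^{\rm diag}$. I would then encode the family $(T_k(\sigma))_k$ as $N$ weighted lattice paths, path $k$ running from a source determined by $k$ to a sink determined by $\sigma(k)$, with the entries of $T_k(\sigma)$ recorded along its arm and leg. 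As in the classical case, such a family is non‑intersecting exactly when $\sigma=\1$ and the resulting tableau $T$ lies in ${\rm SSYT}(\lambda)$, i.e.\ exactly when $(\sigma,T)\in{\mathbb X}^{\dagger}$; hence ${\mathbb X}^{\dagger\dagger}$ is precisely the set of intersecting families. This absorbs both the case $\sigma\neq\1$, where the permuted endpoints force a crossing, and the case $\sigma=\1$ with $T\notin{\rm SSYT}(\lambda)$, where a failed cross‑hook comparison is the crossing.

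For $(\sigma,T)\in{\mathbb X}^{\dagger\dagger}$ I would define $\Phi(\sigma,T)$ by the standard tail‑exchange: scan in a fixed order for the first intersection point $P$, let $k$ and $\ell$ be the two paths meeting there, and interchange the portions of those paths lying strictly beyond $P$ while leaving $P$ shared. Reading the result back through Lemma \ref{sigmatab} yields $(\sigma',T')$ with $\sigma'=\sigma\cdot(k\ \ell)$, so ${\rm sgn}(\sigma')=-{\rm sgn}(\sigma)$. The entries that get interchanged occupy steps corresponding to cells of $D(\lambda)$ lying on a single diagonal $j-i=\text{constant}$, so since ${\pmb s}\in W_\lambda^{\rm diag}$ is constant along diagonals we get $Z_\lambda({\pmb s},T')=Z_\lambda({\pmb s},T)$; and because the analogous Schur–Hurwitz weight $Z_\lambda({\pmb s},M+{\pmb x})$ likewise only feels ${\pmb s}$ and ${\pmb x}$ along diagonals, the very same $\Phi$ applies there with no modification. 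Finally $\Phi$ is an involution because the exchanged family still has $P$ as its first intersection point and $\{k,\ell\}$ as the selected pair, so a second application undoes the exchange.

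The one genuinely delicate step is this last verification. One must fix the scan order and the rule selecting $\{k,\ell\}$ carefully enough that the tail‑exchange is reversible in every configuration — intersections can occur along the arm of a hook, along its leg, or at a corner where arm meets leg — and one must check that the exchanged family is still intersecting, so that $\Phi$ never escapes ${\mathbb X}^{\dagger\dagger}$ into ${\mathbb X}^{\dagger}$. This is exactly the bookkeeping carried out in \cite{MN4}; it is somewhat more transparent here because the $N$ hooks meet only along the main diagonal, forcing every first bad event to pit the first row of one hook against the corner or leg of another, so the exchange is essentially forced. Once this is settled, the sign and weight behaviour recorded above finishes the proof, and the passage from the ordinary to the Hurwitz case costs nothing.
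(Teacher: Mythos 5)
The paper never proves this lemma: it is imported verbatim from \cite[Lemma 5.3]{MN4}, whose method (as Remark 2.7 indicates) is the direct tableau involution of E{\v g}ecio{\v g}lu and Remmel. Your overall strategy --- pair off the elements of ${\mathbb X}^{\dagger\dagger}$ by a sign-reversing, weight-preserving involution $\Phi$ and observe that ${\mathbb X}^{\dagger}$ contributes only $\sigma=\1$ terms --- is exactly the right one, and the reduction of the lemma to the existence of such a $\Phi$ is fine.

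The genuine gap is that $\Phi$ is never actually constructed: every nontrivial assertion is deferred to \cite{MN4} (``This is exactly the bookkeeping carried out in [MN4]''), which defeats the purpose of a proof. Concretely, three things are asserted but not established. First, the lattice-path encoding of the hooks $T_k(\sigma)$ (a path whose arm records row $k$ and whose leg records column $\sigma(k)$) is not defined, and the claimed equivalence ``the family is non-intersecting iff $\sigma=\1$ and $T\in{\rm SSYT}(\lambda)$'' is the hard part of the argument, not a citation to the classical case: a $\1$-tableau satisfies only conditions (I)--(III), so it can fail semistandardness through column-strictness violations in $\alpha(\lambda)$ or row violations in $\beta(\lambda)$, and one must show each such failure produces a crossing in your model. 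Second, the weight-preservation claim is misstated: the exchanged tails span many diagonals, so what must be verified is that each individual swapped entry moves to a cell of the \emph{same} content $j-i$ (only then does $s_{ij}=z_{j-i}$ make $Z_{\lambda}({\pmb s},\cdot)$ invariant); as written, ``lying on a single diagonal'' is not what happens. Third, the involutivity (that $P$ remains the first intersection of the exchanged family with the same selected pair, and that the image stays in ${\mathbb X}^{\dagger\dagger}$) is precisely the delicate point and is left unproved. Note that the E{\v g}ecio{\v g}lu--Remmel route avoids the lattice-path detour entirely: one locates the least diagonal index at which the assembled filling violates semistandardness or $\sigma$ acts nontrivially, exchanges the appropriate arms or legs of the two offending hooks (so $\sigma\mapsto\sigma\cdot(k\,\ell)$), and the content check is immediate because the exchanged cells occupy the same diagonals by construction. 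Rewriting your argument in that form would close the gap without having to build and validate a new path model.
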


%%%%%%%%%%%%%%%%%%%%%%%%%%%%%%%%%%%%%%%%%%%%%%%%
%%%%%%%%%%%%%%%%%%%%%%%%%%%%%%%%%%%%%%%%%%%%%%%%
%%%%%%%%%%%%%%%%%%%%%%%%%%%%%%%%%%%%%%%%%%%%%%%%

%%%%%%%%%%%%%%%%%
\subsection{Giambelli formula for Schur-Hurwitz multiple zeta-functions}
%%%%%%%%%%%%%%%%%%

%We can obtain
Here we describe a new result, the Giambelli formula for 
$\zeta_{\lambda}({\pmb s} | {\pmb x})$, as an extension of Theorem \ref{Giambelli0}.
\begin{thm}%\label{Giambelli0} %{\rm (\cite[Theorem 4.5]{NPY18})}
\label{Giambelli_dayo}
With the same notation as in Theorem \ref{Giambelli0},
we have
\begin{equation}\label{expij}
\zeta_{\lambda}({\pmb s}|{\pmb x}) = \det(\zeta_{i,j}({\pmb s}_{i,j}^F|{\pmb x}_{i,j}^F))_{1 \leq i,j \leq N}.
\end{equation}
%We write $\zeta_{i,0}$ when $\lambda=(p_i+1), {\rm or}\;
%=(p_i-1 | \emptyset )$ in the Frobenius notation.
 \end{thm}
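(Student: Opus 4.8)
The plan is to follow exactly the strategy used in \cite{MN4} for the ordinary Giambelli formula (Theorem \ref{Giambelli0}), inserting the shift parameters ${\pmb x}$ throughout. The key observation is that the combinatorial backbone of the argument — the notion of $\sigma$-tableau, the factorization Lemma \ref{sigmatab}, and the cancellation Lemma \ref{cancellation} — is purely about the $\mathbb{Z}^2$-positions of boxes and the orderings of the integer entries, and is completely insensitive to whether we weight a box $(i,j)$ with entry $t_{ij}$ by $t_{ij}^{-s_{ij}}$ or by $(t_{ij}+x_{ij})^{-s_{ij}}$. So the whole machinery transfers once we replace $Z_\lambda({\pmb s},M)$ by $Z_\lambda({\pmb s},M+{\pmb x})$ and, on the right-hand side, $Z({\pmb s}_{k,\sigma(k)}^F, T_k(\sigma))$ by $Z({\pmb s}_{k,\sigma(k)}^F, T_k(\sigma)+{\pmb x}_{k,\sigma(k)}^F)$, where ${\pmb x}_{k,\sigma(k)}^F$ is the sub-array of ${\pmb x}$ sitting in the same cells as ${\pmb s}_{k,\sigma(k)}^F$.

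Concretely, I would proceed as follows. First, expand the determinant on the right-hand side of \eqref{expij} as
\[
\det(\zeta_{i,j}({\pmb s}_{i,j}^F|{\pmb x}_{i,j}^F))_{1\le i,j\le N}
=\sum_{\sigma\in{\frak S}_N}{\rm sgn}(\sigma)\prod_{k=1}^N\zeta_{(p_k+1,1^{q_{\sigma(k)}})}({\pmb s}_{k,\sigma(k)}^F|{\pmb x}_{k,\sigma(k)}^F).
\]
Second, by the hook-shape special case of the definition \eqref{1-1} each factor is $\sum_{T_k\in{\rm SSYT}(p_k|q_{\sigma(k)})}Z({\pmb s}_{k,\sigma(k)}^F, T_k+{\pmb x}_{k,\sigma(k)}^F)$, so the product over $k$ becomes a sum over tuples $(T_1,\dots,T_N)$ of hook SSYT. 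Third, invoke the bijection (from \cite{MN4}, underlying Lemma \ref{sigmatab}) between such tuples and $\sigma$-tableaux $T$ of shape $\lambda$: a $\sigma$-tableau restricts to the hooks $T_k(\sigma)$, and conversely compatible hooks glue to a $\sigma$-tableau, with the weight identity $Z_\lambda({\pmb s},T+{\pmb x})=\prod_k Z({\pmb s}_{k,\sigma(k)}^F,T_k(\sigma)+{\pmb x}_{k,\sigma(k)}^F)$ — this last identity is immediate from Lemma \ref{sigmatab} because the weight of a box depends only on its content $z_{j-i}$, its shift $y_{j-i}$ (constant along diagonals, since ${\pmb s}\in W_\lambda^{\rm diag}$ and the $\sigma$-tableau construction preserves which diagonal a box lies on), and its entry, all of which match on both sides. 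This gives $\sum_\sigma{\rm sgn}(\sigma)\sum_{T\text{ a }\sigma\text{-tab}}Z_\lambda({\pmb s},T+{\pmb x})=\sum_{(\sigma,T)\in\mathbb{X}}{\rm sgn}(\sigma)Z_\lambda({\pmb s},T+{\pmb x})$. Fourth, apply the Hurwitz analogue of Lemma \ref{cancellation}: for $(\sigma,T)\in\mathbb{X}^{\dagger\dagger}$ the pairing $(\sigma,T)\mapsto(\sigma',T')$ constructed in \cite{MN4} swaps two equal-content (hence equal-shift) boxes, so ${\rm sgn}(\sigma)Z_\lambda({\pmb s},T+{\pmb x})+{\rm sgn}(\sigma')Z_\lambda({\pmb s},T'+{\pmb x})=0$, exactly as in \eqref{daggerzero}; summing, only $\mathbb{X}^\dagger=\{(\1,T):T\in{\rm SSYT}(\lambda)\}$ survives. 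Fifth, note $\sum_{(\1,T)}Z_\lambda({\pmb s},T+{\pmb x})=\sum_{M\in{\rm SSYT}(\lambda)}Z_\lambda({\pmb s},M+{\pmb x})=\zeta_\lambda({\pmb s}|{\pmb x})$, which closes the chain of equalities.

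The main point requiring care — though not really an obstacle — is verifying that every weight identity used in \cite{MN4} that was originally of the form $t_{ij}^{-s_{ij}}=t_{i'j'}^{-s_{i'j'}}$ (used when two boxes on the same diagonal are interchanged) upgrades to $(t_{ij}+x_{ij})^{-s_{ij}}=(t_{i'j'}+x_{i'j'})^{-s_{i'j'}}$. This holds precisely because both $s$ and $x$ are diagonal-constant on the cells being permuted: we have assumed ${\pmb s}\in W_\lambda^{\rm diag}$ as in Theorem \ref{Giambelli0}, and the bijection of Lemma \ref{sigmatab} only rearranges entries among boxes sitting on a common content value (the $\sigma$-tableau condition (III) together with (I),(II) ensures each $T_k(\sigma)$ occupies the $k$-th diagonal hook), so the relevant $x_{ij}$'s all equal the corresponding $y_{j-i}$. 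With this remark in place, the proof is a line-by-line transcription of \cite{MN4} with $M\rightsquigarrow M+{\pmb x}$, and one may state it as such. I would also remark, as the paper does elsewhere (Remark \ref{conti}), that by meromorphic continuation the identity extends beyond $W_\lambda^{\rm diag}$ to $T^{\rm diag}(\lambda,\mathbb{C})\times T^{\rm diag}(\lambda,\mathbb{R}_{\ge0})$ away from the singular locus.
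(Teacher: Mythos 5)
Your proposal is correct and follows essentially the same route as the paper: expand the determinant via the Leibniz formula, convert the product of hook-type factors into a sum over $\sigma$-tableaux using Lemma \ref{sigmatab}, and kill the contribution of ${\mathbb X}^{\dagger\dagger}$ by the Hurwitz version of Lemma \ref{cancellation}, which survives precisely because the shifts $x_{ij}=y_{j-i}$ are constant along the diagonals being permuted. The paper's proof makes exactly the same observation about the cancellation pairing being unaffected by adding a diagonal-constant ${\pmb x}$, so there is nothing to add.
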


\begin{proof}
%Let ${\frak S}_N$ be the $N$-th symmetric group. 
The right-hand side of \eqref{expij} is
\begin{align}\label{firststep}
&\det(\zeta_{i,j}({\pmb s}_{i,j}^F|{\pmb x}_{i,j}^F))_{1\leq i,j\leq N}
=\sum_{\sigma\in {\frak S}_N}{\rm sgn}(\sigma)\prod_{k=1}^N \zeta_{k, \sigma(k)}
({\pmb s}_{k, \sigma(k)}^F|{\pmb x}_{k,\sigma(k)}^F)\\
&\quad =\sum_{\sigma\in {\frak S}_N}{\rm sgn}(\sigma)\prod_{k=1}^N \sum_{M_k(\sigma)\in 
SSYT(p_k | q_{\sigma(k)})}Z_{(p_k | q_{\sigma(k)})}({\pmb s}_{k,\sigma(k)}^F, M_k(\sigma)+{\pmb x}_{k,\sigma(k)}^F).\notag
\end{align}

%%%%%%%%%%%%%%%%%%%%%%%%%%%%%%%%%%%%
%Conversely, for any given 
%$T_k(\sigma)\in {\rm SSYT}(p_k | q_{\sigma(k)})$ ($1\leq k\leq N$),
%we may construct a $\sigma$-tableau $T$ by combining those $T_k(\sigma)$.
Then from \eqref{firststep} and Lemma \ref{sigmatab} we find

\begin{align*}
\det(\zeta_{i,j}({\pmb s}_{i,j}^F|{\pmb x}_{i,j}^F))_{1\leq i,j\leq N}
&=\sum_{\sigma\in {\frak S}_d}{\rm sgn}(\sigma) \sum_{T : \sigma{\rm{-tableau}}}Z_{\lambda}({\pmb s}, T+{\pmb x})\\
&=\sum_{(\sigma, T)\in {\mathbb X}}{\rm sgn}(\sigma) Z_{\lambda}({\pmb s}, T+{\pmb x}).
\end{align*}

We denote the right-hand side of the above by $W_{\mathbb X}$.

%%%%%%%%%%%%%%%%%%%%\UTF{00E3}\UTF{0081}\UTF{0093}\UTF{00E3}\UTF{0081}\UTF{0093}\UTF{00E3}\UTF{0081}\UTF{0178}\UTF{00E3}\UTF{0081}\UTF{00A1}\UTF{00F8}\UTF{00E3}\UTF{0082}\UTF{00B3}\UTF{00E3}\UTF{0083}\UTF{00A1}\UTF{00E3}\UTF{0083}\UTF{00B3}\UTF{00E3}\UTF{0083}\UTF{0088}\UTF{00E3}\UTF{0082}\UTF{00A1}\UTF{00F1}\UTF{00E3}\UTF{0082}\UTF{0160}\UTF{00E3}\UTF{0083}\UTF{0088}%%%%%%%%%%%

If $T\in {\rm SSYT}(\lambda)$, then $T$ is obviously a ${\1}$-tableau.
However $T$ is never a $\sigma$-tableau for any $\sigma\neq \1$.
In fact, if $T$ is a $\sigma$-tableau, then 
$t_{\sigma(i),\sigma(i)}\leq t_{i,i+1}$ for any $i$.
If $\sigma\neq\1$, then there exists an $i$ for which $j=\sigma(i)>i$, so
$t_{j,j}\leq t_{i,i+1}$.   However, since $T$ is SSYT, it should be
$t_{i,i+1}\leq t_{j-1,j}<t_{j,j}$, which is a contradiction.

Since
\begin{align*}
W_{{\mathbb X}^{\dagger}}:&=
\displaystyle{\sum_{({\1}, T)\in {\mathbb X}^{\dagger}}
{\rm sgn}(\1)Z_{\lambda}({\pmb s}, T+{\pmb x})}\\
&=\sum_{T\in {\rm SSYT}({\lambda})}Z_{\lambda}({\pmb s}, T+{\pmb x})=\zeta_{\lambda}({\pmb s} | {\pmb x}),
\end{align*}
we will show that $W_{\mathbb X}=W_{{\mathbb X}^{\dagger}}$. In other words,
it is enough to show that 
\begin{align}\label{daggerdagger}
W_{{\mathbb X}^{\dagger\dagger}}:=
\sum_{(\sigma, T)\in {\mathbb X}^{\dagger\dagger}}{\rm sgn}(\sigma)Z_{\lambda}({\pmb s}, T+{\pmb x})=0.
\end{align}
Note that $T$ is not SSYT for any element 
$(\sigma,T)\in {\mathbb X}^{\dagger\dagger}$.
We now consider the case of $T+{\pmb x}$ and $T'+{\pmb x}$ in stead of $T$ and $T'$ on the left-hand side of \eqref{daggerzero}, respectively. If ${\pmb x}\in T^{diag}(\lambda, {\mathbb R}_{\geq 0})$, the same 
element is added to each $(i, j)$-element of $T$ and $T'$. Therefore, the condition has no effect, and the Hurwiz version of Lemma \ref{cancellation} holds.
This leads to \eqref{daggerdagger}, and also the theorem.

\end{proof}

%%%%%%%%%%%%%%
\subsection{Giambelli formula for Schur-Hurwitz multiple zeta-functions of skew type}
%%%%%%%%%%%%%%

Recall the definition of Schur multiple zeta-functions of skew type.
For a skew Young diagram $\theta$, the Schur multiple zeta-function attached to $\theta$, which we denote by
$\zeta_{\theta}(\pmb{s})$, is defined similarly as \eqref{1-1}, just $\lambda$
replaced by $\theta$.
We also use the notation $T(\theta,X)$, $C(\theta)$ analogously to
$T(\lambda,X)$, $C(\lambda)$, respectively.
For any (skew) diagram $\theta$,
by $\theta^{\#}$ we denote the transpose of $\theta$ with respect to the anti-diagonal.
%Let
%$$
%I_{\theta}=\{\boldsymbol{\gamma}=(\gamma_{ij})\in T(\theta,\mathbb{N})\mid
%\gamma_{ij}\geq 2\;{\rm for \;all}\; (i,j)\in C(\theta)\}.
%$$
%
Then the authors obtained the Giambelli-type formula
for Schur multiple zeta-functions of skew type as follows.

\begin{thm}\label{Giambelli2}{\rm{(\cite[Theorem 3.2]{MN4})}}
Let  $\lambda$ be a partition whose
Frobenius notation is $\lambda=(p_1, \cdots , p_N | q_1, \cdots, q_N)$.
Let $\boldsymbol{\gamma} \in T(\lambda,\mathbb{N})$, and
assume $\boldsymbol{\gamma}^{\#}\in I_{\lambda^{\#}}$.
Then we have
\begin{align}\label{Giambelli2-formula}
\zeta_{\lambda^{\#}}(\boldsymbol{\gamma}^{\#})=
\det(\zeta_{(p_i,1^{q_j})^{\#}}(\gamma_{ij}^{F\#}))_{1\leq i,j\leq N}.
\end{align}
\end{thm}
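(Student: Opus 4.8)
The plan is to carry out the argument of \cite{MN4} (itself modelled on \cite{EgeRem88}) --- the one underlying Theorems \ref{Giambelli0} and \ref{Giambelli_dayo}, and reproduced in the proof of the latter --- but reflected through the anti-diagonal so that it applies to the skew shape $\lambda^{\#}$. First I would expand the right-hand side of \eqref{Giambelli2-formula} along $\mathfrak S_N$,
\[
\det(\zeta_{(p_i,1^{q_j})^{\#}}(\gamma_{ij}^{F\#}))_{1\le i,j\le N}
=\sum_{\sigma\in\mathfrak S_N}\mathrm{sgn}(\sigma)\prod_{k=1}^{N}\zeta_{(p_k,1^{q_{\sigma(k)}})^{\#}}(\gamma_{k\sigma(k)}^{F\#}),
\]
and replace each factor $\zeta_{(p_k,1^{q_{\sigma(k)}})^{\#}}(\gamma_{k\sigma(k)}^{F\#})$ by its defining sum over the semi-standard fillings of the hook $(p_k,1^{q_{\sigma(k)}})^{\#}$. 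The hypothesis $\boldsymbol\gamma^{\#}\in I_{\lambda^{\#}}$ --- the $\#$-counterpart of the diagonal-admissibility condition behind $W_{\lambda}^{\mathrm{diag}}$ --- is what makes the Frobenius hook pieces $\gamma_{ij}^{F\#}$ well defined and guarantees that all the series in sight converge absolutely, so that the product over $k$ may be rearranged into a single sum over tuples of hook fillings.

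The core step is the gluing bijection, namely the $\#$-version of Lemma \ref{sigmatab}. Cutting a filling $T$ of shape $\lambda^{\#}$ along the anti-diagonal that is the image of $D^{M}(\lambda)$ under $\#$, and relabelling those cells according to a permutation $\sigma\in\mathfrak S_N$, produces a tuple of fillings of the hooks $(p_k,1^{q_{\sigma(k)}})^{\#}$; calling $T$ a \emph{$\sigma$-tableau of shape $\lambda^{\#}$} when all these pieces are genuine hook tableaux satisfying the relevant monotonicity (the $\#$-analogues of conditions (I)--(III)), one checks that the map sending $T$ to its tuple of pieces is a weight-preserving bijection from the $\sigma$-tableaux of shape $\lambda^{\#}$ onto the tuples of hook fillings indexed by $\sigma$. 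Feeding this back into the previous display gives
\[
\det(\zeta_{(p_i,1^{q_j})^{\#}}(\gamma_{ij}^{F\#}))_{1\le i,j\le N}
=\sum_{(\sigma,T)}\mathrm{sgn}(\sigma)\,Z_{\lambda^{\#}}(\boldsymbol\gamma^{\#},T),
\]
the sum running over all pairs $(\sigma,T)$ with $T$ a $\sigma$-tableau of shape $\lambda^{\#}$.

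It then remains to extract the genuinely semi-standard part. As in the proof of Theorem \ref{Giambelli_dayo}, if $T$ is a semi-standard tableau of the skew shape $\lambda^{\#}$ then it is automatically a $\1$-tableau and is never a $\sigma$-tableau for $\sigma\neq\1$ (the compatibility condition at the anti-diagonal would force a strict column increase to be reversed), so the contribution of all pairs with $T$ semi-standard is precisely $\zeta_{\lambda^{\#}}(\boldsymbol\gamma^{\#})$. For every remaining pair $(\sigma,T)$ --- those with $T$ not semi-standard, including $\sigma=\1$ paired with a non-semi-standard $\1$-tableau --- I would exhibit the sign-reversing involution $(\sigma,T)\mapsto(\sigma',T')$ that is the $\#$-version of Lemma \ref{cancellation}: at the first anti-diagonal cell where $T$ fails to be standard, interchange the row and the column meeting there and replace $\sigma$ by its composition with the corresponding transposition; this flips $\mathrm{sgn}(\sigma)$ while leaving $Z_{\lambda^{\#}}(\boldsymbol\gamma^{\#},\cdot)$ unchanged, so these terms cancel in pairs. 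What survives is $\zeta_{\lambda^{\#}}(\boldsymbol\gamma^{\#})$, which is \eqref{Giambelli2-formula}.

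I expect the main obstacle to lie in the combinatorial bookkeeping once everything has been reflected through the anti-diagonal: checking that the hook pieces $(p_k,1^{q_{\sigma(k)}})^{\#}$ carry the correct arm and leg lengths, that the monotonicity defining $I_{\lambda^{\#}}$ restricts correctly to each piece produced by the cut, and --- the most delicate point --- that the pairing above is genuinely a fixed-point-free involution on the non-semi-standard part, not merely a weight-preserving sign-reversing correspondence. All of this is carried out in detail in \cite{MN4}; alternatively one may simply invoke \cite[Theorem 3.2]{MN4}.
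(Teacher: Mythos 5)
Your outline is plausible and would likely succeed, but it is not the route the paper (or \cite{MN4}) takes. The paper does \emph{not} redo the $\sigma$-tableau combinatorics on the reflected shape $\lambda^{\#}$. Instead, the skew-type formula is obtained from the straight-shape Giambelli formula (Theorem \ref{Giambelli0}) by a purely algebraic transport: one lifts the determinant identity to the Schur-type quasi-symmetric functions $S_{\lambda}=\det(S_{(p_i+1,1^{q_j})})$, applies the antipode $\mathscr{S}$ of ${\it Qsym}$ --- which is an algebra automorphism (so it commutes with taking determinants) and sends $S_{\theta}$ to $S_{\theta^{\#}}$, the signs $(-1)^{|\theta|}$ being uniform across the permutation expansion since $\sum_k(p_k+q_{\sigma(k)}+1)=|\lambda|$ for every $\sigma$ --- and then pushes down to zeta values via the homomorphism $\rho\phi^{-1}$. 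This is exactly what the commutative diagram \eqref{commu} encodes, and it is why in the paper $\zeta_{\theta^{\#}}$ is \emph{defined} as $\rho\phi^{-1}(\mathscr{S}(S_{\theta}))$. The payoff is that the delicate cancellation (Lemmas \ref{sigmatab} and \ref{cancellation}) is carried out only once, on the straight shape. Your direct approach would, by contrast, have to (i) first identify $\zeta_{\lambda^{\#}}(\boldsymbol{\gamma}^{\#})$, as defined through the antipode, with the naive sum over semi-standard fillings of the skew shape $\lambda^{\#}$ --- an identification that is itself the content of the antipode formula, not a triviality --- and (ii) re-derive the reflected versions of the gluing lemma and the sign-reversing involution, including transporting conditions (I)--(III) through $\#$. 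These steps are feasible but constitute genuine extra work that the Hopf-algebraic argument sidesteps; your closing remark that one may ``simply invoke'' \cite[Theorem 3.2]{MN4} is, in effect, invoking that algebraic argument.
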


\begin{example}
For $\lambda=(4, 3, 3, 2)$ and
{\small
$$
{\pmb \gamma}=
\ytableausetup{boxsize=normal}
  \begin{ytableau}
  c_0 & c_1 & c_2 & c_{3}  \\
  c_{-1} & c_{0} & c_{1} \\
 c_{-2}    & c_{-1} & c_{0}  \\ 
 c_{-3}    & c_{-2}  
  \end{ytableau},
$$ applying Theorem \ref{Giambelli2} gives
\begin{align*}
& \zeta_{\lambda^{\#}}\left(
\ytableausetup{boxsize=normal}
  \begin{ytableau}
   \none & \none & \none & c_{3}  \\
   \none & c_{0} & c_{1} &  c_{2} \\
 c_{-2}    & c_{-1} & c_{0} & c_{1}  \\ 
 c_{-3}    & c_{-2} & c_{-1} & c_{0} 
  \end{ytableau}\right)\\
&  =
  \left|
  \begin{matrix}
  \zeta_{(4,1^{3})^{\#}}\left(
\ytableausetup{boxsize=normal}
  \begin{ytableau}
   \none & \none & \none & c_{3}  \\
   \none &  \none &  \none &  c_{2} \\
  \none  &  \none &  \none & c_{1}  \\ 
 c_{-3}    & c_{-2} & c_{-1} & c_{0} 
  \end{ytableau}
\right)
& 
  \zeta_{(3,1^{3})^{\#}}\left(
\ytableausetup{boxsize=normal}
  \begin{ytableau}
 \none & \none & c_{3}  \\
  \none &  \none &  c_{2} \\
  \none &  \none & c_{1}  \\ 
 c_{-2} & c_{-1} & c_{0} 
  \end{ytableau}
\right)
&
  \zeta_{(1^{4})^{\#}}\left(
\ytableausetup{boxsize=normal}
  \begin{ytableau}
 c_{3}  \\
  c_{2} \\
 c_{1}  \\ 
 c_{0} 
  \end{ytableau}
\right)
\\
\\
  \zeta_{(4, 1)^{\#}}\left(
\ytableausetup{boxsize=normal}
  \begin{ytableau}
  \none  &  \none &  \none & c_{1}  \\ 
 c_{-3}    & c_{-2} & c_{-1} & c_{0} 
  \end{ytableau}
\right)
&
  \zeta_{(3,1)^{\#}}\left(
\ytableausetup{boxsize=normal}
  \begin{ytableau}
  \none &  \none & c_{1}  \\ 
 c_{-2} & c_{-1} & c_{0} 
  \end{ytableau}
\right)
&
  \zeta_{(1^{2})^{\#}}\left(
\ytableausetup{boxsize=normal}
  \begin{ytableau}
c_{1}  \\ 
 c_{0} 
  \end{ytableau}
\right)
\\
\\
  \zeta_{({4})^{\#}}\left(
\ytableausetup{boxsize=normal}
  \begin{ytableau}
 c_{-3}    & c_{-2} & c_{-1} & c_{0} 
  \end{ytableau}
\right)
&
  \zeta_{({3})^{\#}}\left(
\ytableausetup{boxsize=normal}
  \begin{ytableau}
 c_{-2} & c_{-1} & c_{0} 
  \end{ytableau}
\right)
&
  \zeta_{({1})^{\#}}\left(
\ytableausetup{boxsize=normal}
  \begin{ytableau}
 c_{0} 
  \end{ytableau}
  \right)
  \end{matrix}
  \right|%=|\zeta_{\theta_{ij}}({\pmb s}_{ij})|_{1\leq i,j\leq 3}
  .
\end{align*}
}
\end{example}

We now obtain an analogous result for the Hurwitz type as follows.
\begin{thm}\label{GiambelliH2}
Let  $\lambda$ be a partition whose
Frobenius notation is $\lambda=(p_1, \cdots , p_N | q_1, \cdots, q_N)$.
Let $\boldsymbol{\gamma} \in T(\lambda,\mathbb{N})$, and
assume $\boldsymbol{\gamma}^{\#}\in I_{\lambda^{\#}}$.
Then we have
\begin{align}\label{Giambelli2-formula}
\zeta_{\lambda^{\#}}(\boldsymbol{\gamma}^{\#} | {\pmb x}^{\#})=
\det(\zeta_{(p_i,1^{q_j})^{\#}}(\gamma_{ij}^{F\#} | x_{ij}^{\#}))_{1\leq i,j\leq N}.
\end{align}
\end{thm}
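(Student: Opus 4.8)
The plan is to adapt the proof of Theorem \ref{Giambelli_dayo} to the skew setting, exactly mirroring how Theorem \ref{Giambelli2} was deduced from Theorem \ref{Giambelli0} in \cite{MN4}, but now carrying the shifting parameters ${\pmb x}$ along throughout. First I would observe that, just as $\zeta_\lambda({\pmb s}) = \sum_{M\in {\rm SSYT}_\lambda} Z_\lambda({\pmb s},M)$ and $\zeta_\lambda({\pmb s}|{\pmb x}) = \sum_{M\in {\rm SSYT}_\lambda} Z_\lambda({\pmb s},M+{\pmb x})$, the same relation holds for skew shapes: $\zeta_{\lambda^{\#}}(\boldsymbol{\gamma}^{\#}|{\pmb x}^{\#}) = \sum_{M} Z_{\lambda^{\#}}(\boldsymbol{\gamma}^{\#}, M+{\pmb x}^{\#})$, where the sum is over admissible (semi-standard) tableaux of shape $\lambda^{\#}$. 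Thus the Hurwitz version is obtained from the ordinary one purely by the substitution $M \mapsto M + {\pmb x}^{\#}$ in every summand.

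Next I would expand the right-hand side determinant of \eqref{Giambelli2-formula} by the Leibniz formula, $\det(\cdots)_{1\leq i,j\leq N} = \sum_{\sigma\in {\frak S}_N} {\rm sgn}(\sigma) \prod_{k=1}^N \zeta_{(p_k,1^{q_{\sigma(k)}})^{\#}}(\gamma_{k,\sigma(k)}^{F\#} | x_{k,\sigma(k)}^{\#})$, and then rewrite each factor as a sum over the relevant hook-shaped (transposed) tableaux with the parameter shift inserted. The combinatorial heart of the argument in \cite{MN4} — the bijection between pairs $(\sigma, T)$ with $T$ a $\sigma$-tableau and the product of hook-tableaux (the skew analogue of Lemma \ref{sigmatab}), together with the sign-cancellation lemma (the skew analogue of Lemma \ref{cancellation}) showing that all contributions with $\sigma\neq\1$ cancel in pairs — is purely combinatorial: it concerns which boxes carry which entries and the order relations among entries, not the arithmetic weights. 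The key point, exactly as in the proof of Theorem \ref{Giambelli_dayo}, is that when ${\pmb x}^{\#}\in T^{\rm diag}$ (equivalently ${\pmb x} = {\pmb x}({\pmb y})$ is content-parametrized so that ${\pmb x}^{\#}$ is constant along the relevant lines), adding the \emph{same} shift to the $(i,j)$-entry of $T$ and of its cancellation partner $T'$ leaves the identity ${\rm sgn}(\sigma)Z_{\lambda^{\#}}(\boldsymbol{\gamma}^{\#},T+{\pmb x}^{\#}) + {\rm sgn}(\sigma')Z_{\lambda^{\#}}(\boldsymbol{\gamma}^{\#},T'+{\pmb x}^{\#})=0$ intact, and also does not disturb the order conditions (I), (II), (III) defining $\sigma$-tableaux, since those conditions compare entries in boxes on the same diagonal. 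Hence the skew version of Lemma \ref{cancellation} holds verbatim with $T$ replaced by $T+{\pmb x}^{\#}$, and the determinant collapses to $\sum_{T\in {\rm SSYT}(\lambda^{\#})} Z_{\lambda^{\#}}(\boldsymbol{\gamma}^{\#}, T+{\pmb x}^{\#}) = \zeta_{\lambda^{\#}}(\boldsymbol{\gamma}^{\#}|{\pmb x}^{\#})$.

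I anticipate the main obstacle to be bookkeeping rather than conceptual: one must be careful about the anti-diagonal transpose $\#$ and verify that the assignment of shifting parameters is compatible with it — i.e., that $x_{ij}^{\#}$ on the hook $(p_i,1^{q_j})^{\#}$ genuinely matches the restriction of ${\pmb x}^{\#}$ to the corresponding cells of $\lambda^{\#}$, and that the constancy along lines needed for the cancellation is precisely what the hypothesis provides after transposition. Since all of this is already handled for the weights $\gamma^{\#}$ in \cite{MN4}, and the shift ${\pmb x}^{\#}$ is transported identically, I would simply remark that the proof of Theorem \ref{Giambelli2} in \cite{MN4} goes through word for word with ${\pmb \gamma}^{\#}$ replaced by ${\pmb \gamma}^{\#} + {\pmb x}^{\#}$ (equivalently, with each entry $t_{ij}$ replaced by $t_{ij}+x_{ij}^{\#}$), the only new input being the observation — already made in the proof of Theorem \ref{Giambelli_dayo} — that a diagonal-constant shift commutes with both the $\sigma$-tableau conditions and the pairwise cancellation. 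This yields \eqref{Giambelli2-formula} and completes the proof.
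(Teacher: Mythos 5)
Your strategy diverges from the paper's at the decisive point, and the divergence exposes a gap. The paper does not prove Theorem \ref{GiambelliH2} by running the $\sigma$-tableau/sign-cancellation machinery on the skew shape $\lambda^{\#}$. Instead it transports the already-proved straight-shape identity (Theorem \ref{Giambelli_dayo}) through the antipode $\mathscr{S}$ of the Hopf algebra of quasi-symmetric functions: since $\rho\phi^{-1}\circ\mathscr{S}$ is a homomorphism and realizes the passage from $\zeta_{\theta}$ to $\zeta_{\theta^{\#}}$ (this is the content of the commutative diagram \eqref{commu}), applying it to the determinant identity for $\lambda$ turns the left-hand side into $\zeta_{\lambda^{\#}}$ and each hook factor into $\zeta_{(p_i,1^{q_j})^{\#}}$, while the determinant structure survives because a ring homomorphism commutes with the Leibniz expansion. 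The same is true of Theorem \ref{Giambelli2} in \cite{MN4}: the paper states explicitly that its proof uses this mapping, not a direct combinatorial argument on $\lambda^{\#}$.

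This matters because your argument leans on ``the skew analogue of Lemma \ref{sigmatab}'' and ``the skew analogue of Lemma \ref{cancellation}'', asserting that all of this is already handled in \cite{MN4}. It is not: those lemmas are stated and proved only for straight shapes, where the main diagonal $D^M(\lambda)$ and the conditions (I)--(III) make sense as written. For the anti-diagonal transpose $\lambda^{\#}$ --- a genuinely skew shape whose Giambelli building blocks are anti-hooks $(p_i,1^{q_j})^{\#}$ --- you would need to define the analogous $\sigma$-tableaux, prove the factorization into anti-hook tableaux, and construct the sign-reversing involution from scratch; none of that is routine bookkeeping, and the proposal does not carry it out. A secondary issue: in this paper $\zeta_{\theta^{\#}}$ is \emph{defined} via \eqref{commu}, so even your opening identity $\zeta_{\lambda^{\#}}(\boldsymbol{\gamma}^{\#}\,|\,{\pmb x}^{\#})=\sum_M Z_{\lambda^{\#}}(\boldsymbol{\gamma}^{\#},M+{\pmb x}^{\#})$ is something to be reconciled with that definition rather than taken for granted. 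Your one genuinely correct observation --- that a content-constant shift ${\pmb x}$ is inert under all the tableau combinatorics --- is exactly the point the paper makes, but it makes it once, for the straight shape in Theorem \ref{Giambelli_dayo}, and then lets the antipode deliver the skew case for free.
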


The proof of Theorem \ref{GiambelliH2} is identical with that of Theorem \ref{Giambelli2},
which uses the mapping from $\zeta_{\theta}$ to $\zeta_{\theta^{\#}}$ defined 
by the commutative diagram \eqref{commu} below.

Let ${\it Qsym}$ be the set of all quasi-symmetric functions, which is an algebra
with the harmonic product $*$.  It is known that ${\it Qsym}$ has a Hopf algebra
structure with coproduct $\Delta$ (see Hoffman \cite{Hof15}).
The antipode $\mathscr{S}$ is an automorphism of ${\it Qsym}$ and satisfies 
$\mathscr{S}^2={\rm id}$.

%As for the explicit form of $\mathscr{S}$, see \cite[Theorem 3.1]{Hof15}.

The Schur type quasi-symmetric function attached to $\theta$ is defined by
\begin{align}\label{3-1}
S_{\theta}(\boldsymbol{\gamma})=\sum_{M=(m_{ij}) \in {\rm SSYT}(\theta)}
\prod_{(i,j)\in D(\lambda)}t_{m_{ij}}^{\gamma_{ij}},
\end{align}
where $t_{m_{ij}}$ are variables, and
$\boldsymbol{\gamma}=(\gamma_{ij})\in T(\theta,\mathbb{N})$.

Let
$$
I_{\theta}=\{\boldsymbol{\gamma}=(\gamma_{ij})\in T(\theta,\mathbb{N})\mid
\gamma_{ij}\geq 2\;{\rm for \;all}\; (i,j)\in C(\theta)\}.
$$
Let $\rho\phi^{-1}$ be the mapping defined in \cite{Hof15} 
(see also \cite[Section 5]{NPY18}), which is a homomorphism (see \cite[p. 352]{Hof15}) and sends
$S_{\theta}(\boldsymbol{\gamma})$ to $\zeta_{\theta}(\boldsymbol{\gamma})$
when $\boldsymbol{\gamma}\in I_{\theta}$ (see \cite[Lemma 5.1]{NPY18}).
Then we define
$$\zeta_{\theta^{\#}}:=\rho \phi^{-1}(\mathscr{S}(S_{\theta}))$$ 
as in 
the following commutative diagram:
\begin{equation}\label{commu}
\begin{matrix}
S_{\theta} & \stackrel{{\mathscr{S}}}{\longrightarrow} & S_{\theta^{\#}}\\
\rotatebox{90}{$\longleftarrow$} & \rotatebox{180}{$\circlearrowright$} & \rotatebox{90}{$\longleftarrow$}\\
\zeta_{\theta} & \underset{{\rm{def}}}{\longrightarrow} & \zeta_{\theta^{\#}}
\end{matrix}
\end{equation}
With this commutative diagram, Theorem \ref{Giambelli_dayo} leads to Theorem \ref{GiambelliH2}.

%%%%%%%%%%%%%%%%%%%%%%%%%%%%%%%%%%%%%%%%%%%%%%%%%%%%%%%%%%%%%%%%%%%%%%%%%%%%%%%%%%%
\section{The sum of the products of $\zeta$ and $\zeta^{\star}$}\label{sec-5}
%%%%%%%%%%%%%%%%%%%%%%%%%%%%%%%%%%%%%%%%%%%%%%%%%%%%%%%%%%%%%%%%%%%%%%%%%%%%%%%%%%%

The Schur multiple zeta-function associated with $\lambda=(p+1, 1^{q})$ is called 
the {\it Schur multiple zeta-function of hook type}.
For $\lambda=(p+1, 1^{q})$, any ${\pmb s}\in W_{\lambda}$ is trivially belonging to
$W_{\lambda}^{\rm diag}$, hence we may write
\begin{equation}\label{atableau}
{\pmb s}=
%\ytableausetup{boxsize=normal}  
%\begin{ytableau}
%  s_{11} & s_{12} & \cdots & s_{1,p+1}\\
% s_{21} \\
 % \vdots \\
% s_{q+1,1} 
%\end{ytableau}
\begin{array}{|c|c|c|c|}
\hline
s_{11} & s_{12} & \cdots & s_{1,p+1}\\
\hline
 s_{21} \\
\cline{1-1}
\vdots\\
\cline{1-1}
 s_{q+1,1}\\
 \cline{1-1}
\end{array}
=
\ytableausetup{boxsize=normal}  
\begin{ytableau}
\ytableausetup{centertableaux}
  z_{0} & z_{1} & \cdots & z_{p}\\
 z_{-1} \\
  \vdots \\
 z_{-q} 
\end{ytableau}\in W_{\lambda}^{\rm{diag}}\quad, 
\end{equation}
using the notation in Section \ref{sec-2}.
Similarly, any ${\pmb x}\in T(\lambda,\mathbb{R}_{\geq 0})$ for $\lambda=(p+1,1^q)$ is
trivially belonging to ${\pmb x}\in T^{\rm diag}(\lambda,\mathbb{R}_{\geq 0})$, so we
may use the notation ${\pmb y}=\{y_k\}$ introduced in Section \ref{sec-3}.

We now generalize Theorem \ref{thm3b} to the case of Schur-Hurwitz multiple 
zeta-functions, which can be stated as follows.
%It is possible to generalize the above theorems to the case of Schur multiple 
%zeta-functions of Hurwitz type, which can be stated as follows.
\begin{thm}\label{hook_zeta_zetastar}
For $\lambda=(p+1, 1^{q})$, 
we have
\begin{align}\label{hook1}
&\zeta_{\lambda}({\pmb s}|{\pmb x})=
\sum_{j=0}^{q}(-1)^j
\zeta^{\star}(z_{-j}, \ldots, z_{-1}, z_0, z_1, \ldots, z_p | y_{-j}, \ldots, y_{-1}, y_0, 
y_1, \ldots, y_p)\\
&\qquad\times\zeta(z_{-j-1}, \ldots, z_{-q} | y_{-j-1}, \ldots, y_{-q}),\notag
\end{align}
and
\begin{align}\label{hook2}
&\zeta_{\lambda}({\pmb s} | {\pmb x})=
\sum_{j=0}^{p}(-1)^j
\zeta(z_{j}, \ldots, z_{1}, z_0, z_{-1}, \ldots, z_{-q} | y_{j}, \ldots, y_{1}, y_0, y_{-1}, \ldots, y_{-q})\\
&\qquad\times\zeta^{\star}(z_{j+1}, \ldots, z_{p} | y_{j+1}, \ldots, y_{p}).\notag
\end{align}
\end{thm}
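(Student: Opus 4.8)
\textbf{Proof proposal for Theorem \ref{hook_zeta_zetastar}.}
The plan is to mimic the proof of Theorem \ref{thm3} (\cite[Theorem 3.1]{MN3}) directly at the level of the defining Dirichlet series, tracking the shift parameters throughout. I focus on \eqref{hook1}; formula \eqref{hook2} follows by the same argument applied to the conjugate partition, or by the transpose symmetry discussed in Section \ref{sec-4}. First I would parametrize a semi-standard Young tableau $M=(m_{ij})\in{\rm SSYT}(\lambda)$ of hook shape $\lambda=(p+1,1^q)$ by its \emph{corner entry} $c=m_{11}$, the \emph{arm} $m_{11}\le m_{12}\le\cdots\le m_{1,p+1}$ (weakly increasing) and the \emph{leg} $m_{11}<m_{21}<\cdots<m_{q+1,1}$ (strictly increasing). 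Thus
\[
\zeta_{\lambda}({\pmb s}|{\pmb x})=\sum_{c\ge 1}\frac{1}{(c+y_0)^{z_0}}
\Bigl(\sum_{c\le n_1\le\cdots\le n_p}\prod_{k=1}^{p}\frac{1}{(n_k+y_k)^{z_k}}\Bigr)
\Bigl(\sum_{c<\ell_1<\cdots<\ell_q}\prod_{k=1}^{q}\frac{1}{(\ell_k+y_{-k})^{z_{-k}}}\Bigr).
\]
The arm sum with lower bound $c$ is a ``restricted'' Hurwitz zeta-star value, and the leg sum with strict lower bound $c$ is a restricted Hurwitz zeta value.

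The key step is a combinatorial identity expressing the product of these two restricted sums (together with the corner factor) as a signed sum over $j=0,\dots,q$, exactly as in \cite{MN3}: the idea is to \emph{remove the lower bound} on the leg by inclusion--exclusion. Write the strict leg sum $\sum_{c<\ell_1<\cdots<\ell_q}$ as the unrestricted $\zeta(z_{-1},\dots,z_{-q}|y_{-1},\dots,y_{-q})$ minus the part where $\ell_1\le c$, and iterate; each time one of the first $j$ leg variables is forced to merge with the ``corner block'', it gets absorbed into the zeta-star factor $\zeta^{\star}(z_{-j},\dots,z_{-1},z_0,z_1,\dots,z_p\,|\,y_{-j},\dots,y_{-1},y_0,y_1,\dots,y_p)$ — the weak inequalities there being precisely what is produced by the merging $\ell_i\le c\le n_1$ — and the remaining leg variables $\ell_{j+1}<\cdots<\ell_q$ (now genuinely $\ge c+1$, but after the telescoping the lower bound disappears) contribute $\zeta(z_{-j-1},\dots,z_{-q}|y_{-j-1},\dots,y_{-q})$, with the sign $(-1)^j$ coming from the inclusion--exclusion. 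Crucially, the shift $y_i$ travels rigidly attached to its index $i$ at every stage, so the whole manipulation is formally identical to the unshifted case; this is why the theorem holds with no extra hypothesis on ${\pmb x}$ (any ${\pmb x}\in T(\lambda,\mathbb{R}_{\ge 0})$, which is automatically in $T^{\rm diag}$ for hook $\lambda$).

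I would carry this out concretely by induction on $q$: for $q=0$ both sides reduce to $\zeta^{\star}(z_0,z_1,\dots,z_p|y_0,y_1,\dots,y_p)$, and for the inductive step one splits according to whether $\ell_1>c$ or $\ell_1=c$ (here ``$\le$'' collapses to ``$=$'' because $\ell_1\ge c$ from $\ell_1\ge m_{11}$... more precisely $\ell_1\le c$ combined with $\ell_1\ge$ nothing forces care: the leg is strictly increasing \emph{above} $c$, so in fact $\ell_1\ge c+1$ always, and the inclusion--exclusion is rather performed on the \emph{reindexed} sum after dropping the $c<$ constraint), yielding the two-term recursion that matches the claimed alternating sum. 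The routine part is bookkeeping the ranges of summation and checking absolute convergence on $W_\lambda$, which lets all rearrangements go through; by Remark \ref{conti} the identity then extends meromorphically. The main obstacle I anticipate is purely notational: keeping the nested inequalities and the attached shifts aligned when a leg variable is absorbed into the zeta-star block, so that the signs and the index ranges $z_{-j},\dots,z_p$ and $z_{-j-1},\dots,z_{-q}$ come out exactly right; the mathematical content is a direct transcription of \cite[Theorems 3.1 and 4.1]{MN3} with the substitution $m\mapsto m+x$ performed uniformly.
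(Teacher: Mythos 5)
Your proposal is correct and follows essentially the same route as the paper: an induction on $q$ in which the constraint $m_{11}<m_{21}$ linking the corner to the leg is removed, the unconstrained sum factors as $\zeta^{\star}(\cdot|\cdot)\,\zeta(\cdot|\cdot)$, and the subtracted complementary sum (where $m_{21}\le m_{11}$ absorbs the first leg entry into the weakly increasing arm) is a hook of leg length $q-1$ to which the induction hypothesis applies, with the shifts $y_k$ riding along unchanged. The only blemish is the momentarily garbled parenthetical about whether the split is on $\ell_1=c$ or $\ell_1\le c$, which you correctly resolve, so no gap remains.
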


\begin{proof}
The proof runs along the same line as the proof of Theorem \ref{thm3} given in \cite{MN3},
so it is enough to show a brief outline.   We sketch the proof of \eqref{hook1}, which
is by induction on $q$.
The case $q=0$ is obvious, so we may assume that $q\geq 1$.
The definition of $\zeta_{\lambda}({\pmb s}|{\pmb x})$ 
for $\lambda=(p+1, 1^{q})$ is
\begin{align*}
&\zeta_{\lambda}({\pmb s}|{\pmb x})\\
&= \sum_{\substack{m_{11}\leq m_{12}\leq \ldots \leq m_{1,p+1}\\
m_{11}<m_{21}<\ldots <m_{q+1,1}}}
(m_{11}+x_{11})^{-z_{0}}(m_{12}+x_{12})^{-z_{1}}\\
&\qquad\times\cdots\times (m_{1, p+1}+x_{1,p+1})^{-z_{p}}
(m_{21}+x_{21})^{-z_{-1}}(m_{31}+x_{31})^{-z_{-2}}\\
&\qquad\times\cdots\times (m_{q+1,1}+x_{q+1,1})^{-z_{-q}}.
\end{align*}
We divide this multiple sum as
\begin{align*}
 \sum_{\substack{m_{11}\leq m_{12}\leq \ldots \leq m_{1,p+1}\\
m_{21}<\ldots <m_{q+1,1}}}
- \sum_{\substack{m_{11}\leq m_{12}\leq \ldots \leq m_{1,p+1}\\
m_{11}\geq m_{21}<\ldots <m_{q+1,1}}}.
\end{align*}
The first sum is obviously
$$
\zeta^{\star}(z_0,\ldots,z_p | y_0,\ldots,y_p)
\zeta(z_{-1},\ldots,z_{-q} | y_{-1},\ldots,y_{-q}).
$$
Applying the assumption of induction to the second sum, we obtain the result.
\end{proof}

\begin{thm}%{\rm(\cite{MN})}
\label{zeta_zetastar}
Let  $\lambda$ be a partition such that $\lambda=(p_1, \cdots , p_N | q_1, \cdots, q_N)$ 
in the Frobenius notation. 
Assume ${\pmb s}\in W_{\lambda}^{\rm{diag}}$ and
${\pmb x}\in T^{\rm diag}(\lambda,\mathbb{R}_{\geq 0})$.
Then we have
\begin{align*}
&\zeta_{\lambda}({\pmb s} | {\pmb x})= \sum_{\sigma\in {\frak S}_N} {\rm{sgn}}(\sigma) \sum_{j_1=0}^{q_1} \cdots \sum_{j_N=0}^{q_N}
(-1)^{j_1+\cdots +j_N}\\
& \times \zeta^{\star}(z_{-j_1}, \ldots, z_0, \ldots, z_{p_{\sigma(1)}} | y_{-j_1}, \ldots, 
y_0, \ldots, y_{p_{\sigma(1)}})\\
&\qquad\times
\zeta^{\star}(z_{-j_2}, \ldots, z_0, \ldots, z_{p_{\sigma(2)}} | y_{-j_2}, \ldots, y_0, \ldots, y_{p_{\sigma(2)}})\\
&\qquad\times\cdots\times
\zeta^{\star}(z_{-j_N}, \ldots, z_0, \ldots, z_{p_{\sigma(N)}} | y_{-j_N}, \ldots, y_0, \ldots, y_{p_{\sigma(N)}})\\
& \times \zeta(z_{-j_1-1}, \ldots, z_{-q_1} | y_{-j_1-1}, \ldots, y_{-q_1})
\zeta(z_{-j_2-1}, \ldots, z_{-q_2} | y_{-j_2-1}, \ldots, y_{-q_2})\\
&\qquad\times\cdots\times
\zeta(z_{-j_N-1}, \ldots, z_{-q_N} | y_{-j_N-1}, \ldots, y_{-q_N}).
\end{align*}
\end{thm}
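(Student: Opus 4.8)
The plan is to derive the identity by combining the Hurwitz Giambelli formula (Theorem~\ref{Giambelli_dayo}) with the hook-type expansion of Theorem~\ref{hook_zeta_zetastar}, following the very route by which Theorem~\ref{thm3b} is obtained from Theorems~\ref{Giambelli0} and~\ref{thm3} in \cite{MN3}. Since ${\pmb s}\in W_{\lambda}^{\rm diag}$ and ${\pmb x}\in T^{\rm diag}(\lambda,\mathbb{R}_{\ge0})$, Theorem~\ref{Giambelli_dayo} applies; using the invariance of a determinant under transposition we would rewrite
\[
\zeta_{\lambda}({\pmb s}|{\pmb x})=\det\bigl(\zeta_{j,i}({\pmb s}_{j,i}^{F}|{\pmb x}_{j,i}^{F})\bigr)_{1\le i,j\le N}=\sum_{\sigma\in{\frak S}_N}{\rm sgn}(\sigma)\prod_{k=1}^{N}\zeta_{(p_{\sigma(k)}+1,\,1^{q_k})}({\pmb s}_{\sigma(k),k}^{F}\,|\,{\pmb x}_{\sigma(k),k}^{F}).
\]
The transposition step is in fact the one genuinely delicate point: as written, \eqref{expij} attaches $\zeta_{(p_i+1,1^{q_j})}$ to the $(i,j)$-entry, so a direct Laplace expansion would permute the leg-lengths $q_j$; passing to the transpose (equivalently, replacing $\sigma$ by $\sigma^{-1}$ afterwards) is what makes the arm-lengths $p_{\sigma(k)}$ carry the permutation while the leg-lengths $q_k$ stay fixed, matching the shape of the asserted formula.

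Next I would examine each hook factor. By the definition of ${\pmb s}_{i,j}^{F}$ in Theorem~\ref{Giambelli0} (and of ${\pmb x}_{i,j}^{F}$ in Theorem~\ref{Giambelli_dayo}), the $k$-th factor is the Schur--Hurwitz zeta-function of the hook $(p_{\sigma(k)}+1,1^{q_k})$ whose content-parametrization is $z_{-q_k},\ldots,z_{-1},z_0,z_1,\ldots,z_{p_{\sigma(k)}}$ and whose shift-parametrization is $y_{-q_k},\ldots,y_{-1},y_0,y_1,\ldots,y_{p_{\sigma(k)}}$; that the shifts of each hook block are again content-parametrized uses precisely the hypothesis ${\pmb x}\in T^{\rm diag}(\lambda,\mathbb{R}_{\ge0})$. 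Applying \eqref{hook1} of Theorem~\ref{hook_zeta_zetastar} to this factor with $p=p_{\sigma(k)}$ and $q=q_k$ yields
\[
\zeta_{(p_{\sigma(k)}+1,\,1^{q_k})}({\pmb s}_{\sigma(k),k}^{F}\,|\,{\pmb x}_{\sigma(k),k}^{F})=\sum_{j_k=0}^{q_k}(-1)^{j_k}\,\zeta^{\star}(z_{-j_k},\ldots,z_0,\ldots,z_{p_{\sigma(k)}}\,|\,y_{-j_k},\ldots,y_0,\ldots,y_{p_{\sigma(k)}})\,\zeta(z_{-j_k-1},\ldots,z_{-q_k}\,|\,y_{-j_k-1},\ldots,y_{-q_k}).
\]

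Finally I would substitute these expansions into the product over $k$, expand the product of the $N$ finite sums over $j_1,\ldots,j_N$, and collect the sign $(-1)^{j_1+\cdots+j_N}$ together with ${\rm sgn}(\sigma)$; what results is, term by term, the right-hand side of the statement. All rearrangements are legitimate because ${\pmb s}\in W_{\lambda}^{\rm diag}$ forces absolute convergence of every series and partial product involved, the nonnegative shifts ${\pmb x}$ only improving convergence; the degenerate boundary terms with $j_k=q_k$, for which the $\zeta$-factor is an empty product, are accommodated by the convention of Remark~\ref{convention}, consistently with its use in both Theorems~\ref{Giambelli_dayo} and~\ref{hook_zeta_zetastar}. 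Beyond $W_{\lambda}^{\rm diag}$ the identity persists by meromorphic continuation, as in Remark~\ref{conti}. I expect no new analytic obstacle: the only real work is the bookkeeping just described — aligning the indices of \eqref{hook1} with those of each Giambelli hook block and checking that the transposition handles the arm/leg permutation correctly.
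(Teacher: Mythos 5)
Your proposal is correct and takes essentially the same approach as the paper: the paper likewise combines the Hurwitz Giambelli formula (Theorem~\ref{Giambelli_dayo}) with the hook-type expansion of Theorem~\ref{hook_zeta_zetastar}, following the argument for Theorem~\ref{thm3b} in \cite{MN3} (the paper phrases this as induction on $N$ with base case $N=1$, but the mechanism is the same determinant expansion you describe). Your explicit treatment of the transposition needed so that the permutation acts on the arm-lengths $p_{\sigma(k)}$ while the leg-lengths $q_k$ stay fixed is correct bookkeeping that the paper leaves implicit.
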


\begin{proof}
The proof is by induction on $N$.     
The case $N=1$ is Theorem \ref{hook_zeta_zetastar}.
To prove the general case, we apply the Giambelli formula (Theorem \ref{Giambelli_dayo}),
which can be used because ${\pmb s}\in W_{\lambda}^{\rm{diag}}$
and ${\pmb x}\in T^{\rm diag}(\lambda,\mathbb{R}_{\geq 0})$.
The argument is just analogous to the proof of Theorem \ref{thm3b} given in
\cite{MN3}, so we omit the details.
\end{proof}

%%%%%%%%%%%%%%%%%%%%%%%%%%%%%%%%%%%%%%%%%%%%%%%%%%%%%%%%%%%%%%%%%%%%%%%%%%%%%%%%%%
\section{Other expressions of Schur multiple zeta-functions of Hurwitz type}\label{sec-6}
%%%%%%%%%%%%%%%%%%%%%%%%%%%%%%%%%%%%%%%%%%%%%%%%%%%%%%%%%%%%%%%%%%%%%%%%%%%%%%%%%%%

%{\color{blue}
%In \cite{MN3}, the authors proved the following theorem:
%
%\begin{thm}{\rm(\cite[Theorem 4.3]{MN3})}
%\label{mainthm3}
%Let  $\lambda$ be a partition whose Frobenius notation is 
%$\lambda=(p_1, \cdots , p_N | q_1, \cdots, q_N)$.
%Assume ${\pmb s}\in W_{\lambda}^{\rm{diag}}$. 
%Then we have
%\begin{align}\label{Dirichlet}
%\zeta_{\lambda}({\pmb s})
%=& \sum_{\substack{m_{11}, m_{22}, \ldots, m_{NN}\geq 1}}
%(m_{11}\ldots m_{NN})^{-z_{0}}\\
%&\times \sum_{\sigma\in {\frak S}_N} {\rm{sgn}}(\sigma)
%\prod_{k=1}^N
%\zeta_{EZ,p_k}^{\star\star}(z_1,\ldots,z_{p_k} | m_{\sigma(k)\sigma(k)},\ldots,
%m_{\sigma(k)\sigma(k)})\notag\\
%&\qquad\times\prod_{j=1}^N 
%\zeta_{EZ,q_j}(z_{-1},\ldots,z_{-q_j} | m_{jj},\ldots,m_{jj}).\notag
%\end{align}
%\end{thm}
%
%Actually, in \cite{MN3}, this theorem is stated in terms of zeta-functions of root
%systems, instead of $ \zeta_{EZ,p_k}^{\star}$ and $\zeta_{EZ,q_j}$.
%}
%%%%%%%%%%%%%%%%%%%%%%%%%%%%%%%%%%%%%%%%%%%%%%%

%%%%%%%%%%%%%
\subsection{Zeta-functions of root systems}
%%%%%%%%%%%%%

Here we briefly recall the notion of {\it zeta-functions of root systems}
introduced by Komori, Matsumoto and Tsumura (\cite{KMT1}; see also \cite{KMT3}), which is
a representation-theoretic generalizations of the multiple zeta-functions of 
Euler-Zagier type \eqref{EulerZagier}.
%It is well known that semisimple Lie algebras over ${\mathbb C}$ are classified by 
%root systems coming from their Dynkin diagrams, and
%the explicit expression for this kind of multiple zeta-function is different for each root system.
Zeta-functions of root systems can be defined for any (reduced) root systems, but
here we only consider the case
when the root system is of type $A_r$.    In this case, the associated zeta-function 
is of the following simple form:
\begin{align}\label{rootzeta}
\zeta_r(\underline{\bf s}, A_r)=\sum_{m_1=1}^{\infty}\cdots \sum_{m_r=1}^{\infty}\prod_{1\leq i<j\leq r+1}(m_i+\cdots + m_{j-1})^{-s(i, j)},
\end{align}
where $s(i, j)$ is the variable corresponding to the root parametrized by $(i, j)$ ($1\leq i, j\leq r+1$, $i\not= j$) and
\begin{align*}
\underline{\bf s}=&(s(1, 2), s(2, 3), \ldots, s(r, r+1), s(1, 3), s(2, 4), \ldots, \\
& s(r-1, r+1), \ldots, s(1, r), s(2, r+1), s(1, r+1)).
\end{align*}

The modified zeta-function of root system was introduced by the authors in \cite{MN2}.
For $r> 0$ and $0\leq d \leq r$, they defined the modified zeta-function of the root system 
of type $A_r$ by
\begin{align}\label{Ahalfstar_def}
\zeta_{r, d}^{ \bullet}(\underline{\bf s},A_r)=&\underbrace{
\left(\sum_{m_1=0}^{\infty}\cdots \sum_{m_d=0}^{\infty}\right)'}_{d \text{ times}}
\underbrace{\sum_{m_{d+1}=1}^{\infty}\cdots \sum_{m_r=1}^{\infty}}_{r-d \text{ times}}\\
&\prod_{1\leq i<j\leq r+1}
(m_i+\cdots +m_{j-1})^{-s{(i,j)}}.\notag
\end{align}
The prime symbol in \eqref{Ahalfstar_def} means that 
the terms $(m_i+\cdots +m_{j-1})^{-s{(i,j)}}$, where $1\leq i<j\leq d+1$ and $m_i=\cdots =m_{j-1}=0$, are omitted.
If $r=0$, we put $\zeta_{r, d}^{ \bullet}(\underline{\bf s},A_r)=1$. 
We can say $\zeta_{r, d}^{\bullet}$ may be regarded as a "hybrid" version of zeta-functions and zeta-star-functions of root systems.
The authors also introduced
\begin{align}\label{AH_def}
\zeta_r^H(\underline{\bf s},x, A_r)=\sum_{m_1=1}^{\infty}\cdots \sum_{m_r=1}^{\infty}\prod_{1\leq i<j\leq r+1}
(x+m_i+\cdots +m_{j-1})^{-s{(i,j)}},
\end{align}
and
\begin{align}\label{Ahalfstarbullet_def}
&\zeta_{r, d}^{ \bullet, H}(\underline{\bf s}, x, A_r)\\
&=\underbrace{
\left(\sum_{m_1=0}^{\infty}\cdots \sum_{m_d=0}^{\infty}\right)}_{d \text{ times}}
\underbrace{\sum_{m_{d+1}=1}^{\infty}\cdots \sum_{m_r=1}^{\infty}}_{r-d \text{ times}}\prod_{1\leq i<j\leq r+1}
(x+m_i+\cdots +m_{j-1})^{-s{(i,j)}}, \notag
\end{align}
where $r>0$ and $x>0$. 
%In the case of $r=0$,  we put 
%$\zeta_{r}^{H}(\underline{\bf s}, x, A_r)=\zeta_{r, d}^{ \bullet, H}(\underline{\bf s}, x, A_r)=1$.

We notice that if $s(i,j)=0$ for all pairs $(i,j)$ with $i\geq 2$, then \eqref{rootzeta} 
reduces to the Euler-Zagier multiple zeta-function
$$
\zeta_{EZ,r}(s(1,2), s(1,3),\ldots,s(1,r+1)).
$$
Similarly we find that
\begin{align}
&\zeta^{\bullet, H}_{p_k, p_k}({\pmb s}_{+(k)}, m_{\sigma(k)\sigma(k)}, A_{p_k})
=\zeta_{EZ,p_k}^{\star\star}(z_1,\ldots,z_{p_k} | m_{\sigma(k)\sigma(k)},\ldots,
m_{\sigma(k)\sigma(k)}),\label{root_EZ1}\\
&\zeta_{q_j}^{H}({\pmb s}_{-(j)}, m_{jj}, A_{q_j})
=\zeta_{EZ,q_j}(z_{-1},\ldots,z_{-q_j} | m_{jj},\ldots,m_{jj}),\label{root_EZ2}
\end{align}
where ${\pmb s}_{+(k)}$ (resp. ${\pmb s}_{-(j)}$) is defined by
$s(1,\ell)=z_{\ell}$ ($1\leq \ell \leq p_k$)
(resp. $s(1,\ell)=z_{-\ell}$ ($1\leq \ell \leq q_j$)) and 
$s(i,j)=0$ for all pairs $(i,j)$ with $i\geq 2$.

In \cite{MN2}, the authors proved certain expressions of Schur multiple zeta-funcions
of anti-hook type in terms of the above (modified) zeta-functions of root systems.
Then in \cite{MN3}, the authors studied the case of content-parametrized Schur multiple
zeta-functions.    Theorem \ref{mainthm3} in Section $2$ is one of the main results in
\cite{MN3}.
The motivation in \cite{MN3} is to develop an analogue of the
arguments in \cite{MN2}, and hence in \cite{MN3}, Theorem \ref{mainthm3} is stated
in terms of zeta-functions of root systems; that is, $\zeta_{EZ,p_k}^{\star\star}$
and $\zeta_{EZ,q_j}$ on the right-hand side of \eqref{Dirichlet} are replaced by
the left-hand sides of \eqref{root_EZ1} and \eqref{root_EZ2}, respectively.
(Note that the definitions of ${\pmb s}_{+(k)}$, ${\pmb s}_{-(j)}$ given in 
\cite[p. 17]{MN3} are not accurate; 0-terms are lacking, so those should be amended 
as above.)

%Then, they proved the following theorem, 
%which is an expression in terms of modified zeta-functions of root systems.
%
%\begin{thm}{\rm(\cite[Theorem 4.3]{MN3})}
%\label{mainthm3}
%Let  $\lambda$ be a partition whose Frobenius notation is 
%$\lambda=(p_1, \cdots , p_N | q_1, \cdots, q_N)$.
%Assume ${\pmb s}\in W_{\lambda}^{\rm{diag}}$. 
%For the symmetric group ${\frak S}_N$, we have
%\begin{align}
%\zeta_{\lambda}({\pmb s})
%=& \sum_{\substack{m_{11}, m_{22}, \ldots, m_{NN}\geq 1}}
%(m_{11}\ldots m_{NN})^{-z_{0}}\notag\\
%&\times \sum_{\sigma\in {\frak S}_N} {\rm{sgn}}(\sigma)
%\prod_{k=1}^N
%\zeta^{\bullet, H}_{p_k, p_k}({\pmb s}_{+(k)}, m_{\sigma(k)\sigma(k)}, A_{p_k})
%\prod_{j=1}^N \zeta_{q_j}^{H}({\pmb s}_{-(j)}, m_{jj}, A_{q_j}),\label{Dirichlet}
%\end{align}
%where ${\pmb s}_{+(k)}=(z_{1}, z_2, \cdots, z_{p_k})$ and ${\pmb s}_{-(j)}=(z_{-1}, z_{-2}, \cdots, z_{-q_j})$.
%\end{thm}

%%%%%%%%%%%%%%%%%%%%%
\subsection{The sum of the products of $\zeta$ and $\zeta^{\star\star}$}
%%%%%%%%%%%%%%%%%%%%%

The aim of this subsection is to generalize Theorem \ref{mainthm3} to the case of 
Schur-Hurwitz multiple zeta-functions.

We first consider the Schur-Hurwitz multiple zeta-function of hook type, corresponding to
$\lambda=(p+1,1^q)$.
From the definition of semi-standard Young tableaux, we see that the runnning indices for 
the definition of $\zeta_{\lambda}({\pmb s} | {\pmb x})$ satisfy 
$1\leq m_{11}\leq m_{12}\leq \cdots \leq m_{1, p+1}$ and
$1\leq m_{11}< m_{21}< \cdots < m_{q+1,1}$. 
Therefore, setting $m_{12}=m_{11}+a_{1}$ ($a_{1}\geq 0$), 
$m_{13}=m_{11}+a_{1}+a_{2}$ ($a_{1}, a_{2}\geq 0$),
$\cdots$,
\begin{equation}\label{k0ina}
m_{1, p+1}=m_{11}+a_{1}+a_{2}+\cdots +a_{p} \quad (a_{i}\geq 0),
\end{equation}
and  $m_{21}=m_{11}+b_{1}$ ($b_{1}\geq 1$), 
$m_{31}=m_{11}+b_{1}+b_{2}$ ($b_{1}, b_{2}\geq 1$),
$\cdots$,
$$m_{q+1,1}=m_{11}+b_{1}+b_{2}+\cdots +b_{q} \quad (b_{j}\geq 1), $$
we can write the Schur-Hurwitz multiple zeta-function of hook type as 
\begin{align}\label{thm4_hooktype}
&\zeta_{\lambda}({\pmb s} | {\pmb x})
= \sum_{\substack{m_{11}\geq 1\\
a_i\geq 0 (1\leq i \leq p)\\
b_j\geq 1(0\leq j\leq q)}}(m_{11}+x_{11})^{-z_{0}}(m_{11}+a_1+x_{12})^{-z_{1}}\\
&\quad\times\cdots\times (m_{11}+a_1+\cdots + a_{p}+x_{1,p+1})^{-z_{p}}\notag\\
&\quad\times (m_{11}+b_{1}+x_{21})^{-z_{-1}}
(m_{11}+b_{1}+ b_2+x_{31})^{-z_{-2}}\notag\\
&\quad\times\cdots\times 
 (m_{11}+b_{1}+b_{2}+\cdots + b_{q}+x_{q+1,1})^{-z_{-q}}
 \notag\\
&= \sum_{m_{11}\geq 1} (m_{11}+x_{11})^{-z_{0}}\zeta_{EZ,p}^{\star\star}(z_1,\ldots,z_p | 
m_{11}+x_{12},\ldots,m_{11}+x_{1,p+1})\notag\\
&\quad\times \zeta_{EZ,q}(z_{-1},\ldots,z_{-q} |  m_{11}+x_{21}, \ldots,m_{11}+x_{q+1,1}).
\notag\\
&= \sum_{m_{11}\geq 1} (m_{11}+y_{0})^{-z_{0}}\zeta_{EZ,p}^{\star\star}(z_1,\ldots,z_p | 
m_{11}+y_1,\ldots,m_{11}+y_p)\notag\\
&\quad\times \zeta_{EZ,q}(z_{-1},\ldots,z_{-q} |  m_{11}+y_{-1}, \ldots,m_{11}+y_{-q}).
\notag\ %\label{exp3}
\end{align}
This is a generalization of \cite[(3.10)]{MN3}.
%where ${\pmb s}_+=(z_{1}, z_2, \cdots, z_{p})$ and ${\pmb s}_-=(z_{-1}, z_{-2}, \cdots, z_{-q})$.

Now we are ready to show the following theorem, which gives a generalization of
Theorem \ref{mainthm3}.

\begin{thm}
\label{mainthm4}
Let  $\lambda$ be a partition such that
$\lambda=(p_1, \cdots , p_N | q_1, \cdots, q_N)$ in the Frobenius notation.
Assume ${\pmb s}\in W_{\lambda}^{\rm{diag}}$
and ${\pmb x}\in T^{\rm diag}(\lambda,\mathbb{R}_{\geq 0})$.
Then we have
\begin{align}\label{Dirichlet2}
&\zeta_{\lambda}({\pmb s})
= \sum_{\substack{m_{11}, m_{22}, \ldots, m_{NN}\geq 1}}
((m_{11}+y_0)\ldots (m_{NN}+y_0))^{-z_{0}}\\
&\times \sum_{\sigma\in {\frak S}_N} {\rm{sgn}}(\sigma)
\prod_{k=1}^N
\zeta_{EZ,p_k}^{\star\star}(z_1,\ldots,z_{p_k} | m_{\sigma(k)\sigma(k)}+y_1,\ldots,
m_{\sigma(k)\sigma(k)}+y_{p_k})\notag\\
%\zeta^{\bullet, H}_{p_k, p_k}({\pmb s}_{+(k)}, m_{\sigma(k)\sigma(k)}, A_{p_k})
&\times\prod_{j=1}^N 
\zeta_{EZ,q_j}(z_{-1},\ldots,z_{-q_j} | m_{jj}+y_{-1},\ldots,m_{jj}+y_{-q_j}).\notag
%\zeta_{q_j}^{H}({\pmb s}_{-(j)}, m_{jj}, A_{q_j}),
%\label{Dirichlet}
\end{align}
%where ${\pmb s}_{+(k)}=(z_{1}, z_2, \cdots, z_{p_k})$ and ${\pmb s}_{-(j)}=(z_{-1}, z_{-2}, \cdots, z_{-q_j})$.
\end{thm}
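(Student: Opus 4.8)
The plan is to combine the Giambelli formula for Schur--Hurwitz multiple zeta-functions (Theorem~\ref{Giambelli_dayo}) with the hook-type expansion~\eqref{thm4_hooktype}, exactly mirroring the proof of Theorem~\ref{mainthm3} in \cite{MN3}. Since ${\pmb s}\in W_{\lambda}^{\rm{diag}}$ and ${\pmb x}\in T^{\rm diag}(\lambda,\mathbb{R}_{\geq 0})$, Theorem~\ref{Giambelli_dayo} applies and gives
\begin{equation*}
\zeta_{\lambda}({\pmb s}|{\pmb x})=\det(\zeta_{i,j}({\pmb s}_{i,j}^F|{\pmb x}_{i,j}^F))_{1\leq i,j\leq N}
=\sum_{\sigma\in{\frak S}_N}{\rm sgn}(\sigma)\prod_{k=1}^N\zeta_{(p_k+1,1^{q_{\sigma(k)}})}({\pmb s}_{k,\sigma(k)}^F|{\pmb x}_{k,\sigma(k)}^F),
\end{equation*}
where each factor is a Schur--Hurwitz multiple zeta-function of hook type. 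The shape $(p_k+1,1^{q_{\sigma(k)}})$ has its content-parameters $z_0,z_1,\ldots,z_{p_k}$ along the row and $z_{-1},\ldots,z_{-q_{\sigma(k)}}$ down the column, with shift parameters $y_0,y_1,\ldots,y_{p_k}$ and $y_{-1},\ldots,y_{-q_{\sigma(k)}}$ respectively, because on the diagonal-constant locus the entry in position $(i,j)$ depends only on $j-i$.

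Next I would apply the hook-type identity~\eqref{thm4_hooktype} to each factor. For the $k$-th factor, with $m_{kk}$ denoting the $(1,1)$-entry of the associated tableau,
\begin{align*}
&\zeta_{(p_k+1,1^{q_{\sigma(k)}})}({\pmb s}_{k,\sigma(k)}^F|{\pmb x}_{k,\sigma(k)}^F)\\
&=\sum_{m_{kk}\geq1}(m_{kk}+y_0)^{-z_0}\,\zeta_{EZ,p_k}^{\star\star}(z_1,\ldots,z_{p_k}|m_{kk}+y_1,\ldots,m_{kk}+y_{p_k})\\
&\qquad\times\zeta_{EZ,q_{\sigma(k)}}(z_{-1},\ldots,z_{-q_{\sigma(k)}}|m_{kk}+y_{-1},\ldots,m_{kk}+y_{-q_{\sigma(k)}}).
\end{align*}
Substituting this into the determinant expansion, interchanging the finite sum over $\sigma\in{\frak S}_N$ with the (absolutely convergent) sums over $m_{11},\ldots,m_{NN}\geq1$, and pulling the common factor $((m_{11}+y_0)\cdots(m_{NN}+y_0))^{-z_0}$ out front yields
\begin{align*}
&\zeta_{\lambda}({\pmb s}|{\pmb x})=\sum_{m_{11},\ldots,m_{NN}\geq1}((m_{11}+y_0)\cdots(m_{NN}+y_0))^{-z_0}\\
&\quad\times\sum_{\sigma\in{\frak S}_N}{\rm sgn}(\sigma)\prod_{k=1}^N\zeta_{EZ,p_k}^{\star\star}(z_1,\ldots,z_{p_k}|m_{kk}+y_1,\ldots,m_{kk}+y_{p_k})\\
&\quad\times\prod_{k=1}^N\zeta_{EZ,q_{\sigma(k)}}(z_{-1},\ldots,z_{-q_{\sigma(k)}}|m_{kk}+y_{-1},\ldots,m_{kk}+y_{-q_{\sigma(k)}}).
\end{align*}
Finally, re-indexing the second product by $j=\sigma(k)$ (so $m_{kk}=m_{\sigma^{-1}(j)\sigma^{-1}(j)}$) and then relabelling $\sigma\mapsto\sigma^{-1}$ (which does not change ${\rm sgn}$ and leaves the first product, which runs symmetrically over all $k$ with $m_{kk}$ now written as $m_{\sigma(k)\sigma(k)}$, intact) gives precisely the claimed formula~\eqref{Dirichlet2}; the convergence and the rearrangements are justified because ${\pmb s}\in W_{\lambda}^{\rm{diag}}$ guarantees absolute convergence throughout, exactly as in \cite{MN3}.

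The main obstacle is purely bookkeeping: keeping the index juggling between the row/column parameters of the hook pieces and the global content parameters $z_k,y_k$ straight, and verifying that the permutation $\sigma$ acts on the $q$-parts (column lengths) while the $p$-parts (row lengths) are attached to the fixed index $k$, so that after the relabelling $\sigma\leftrightarrow\sigma^{-1}$ one recovers the asymmetric-looking but correct statement with $m_{\sigma(k)\sigma(k)}$ in the $\star\star$-factors and $m_{jj}$ in the $\zeta$-factors. Since the determinantal and combinatorial inputs (Theorems~\ref{Giambelli_dayo} and the hook expansion~\eqref{thm4_hooktype}) are already established in the Hurwitz setting, no genuinely new analytic difficulty arises, and one may simply refer to the proof of Theorem~\ref{mainthm3} in \cite{MN3} for the remaining routine details.
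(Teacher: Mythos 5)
Your proposal is correct and follows essentially the same route as the paper: the base ingredient is the hook-type expansion \eqref{thm4_hooktype} and the general case is obtained from the Giambelli formula (Theorem \ref{Giambelli_dayo}), exactly the two inputs the paper invokes (the paper phrases the combination as induction on $N$, but your direct Leibniz expansion of the determinant is the same argument). The only delicate point, which you correctly flag, is the final change of summation variables $m_{kk}\mapsto m_{\sigma(k)\sigma(k)}$ matching the $\star\star$-factors to $m_{\sigma(k)\sigma(k)}$ and the $\zeta$-factors to $m_{jj}$; this works because the prefactor $\prod_j(m_{jj}+y_0)^{-z_0}$ and the range of the $m$-sum are symmetric under permutations.
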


\begin{proof}
The proof is induction on $N$.    The case $N=1$ is \eqref{thm4_hooktype}.
The general case can be deduced by using the Giambelli formula 
(Theorem \ref{Giambelli_dayo}), following the same way as in the proof of 
Theorem \ref{mainthm3} given in \cite{MN3}.
\end{proof}
%
%\begin{thm}\label{mainthm3}
%Let  $\lambda$ be a partition such that $\lambda=(p_1, \cdots , p_N | q_1, \cdots, q_N)$ 
%in the Frobenius notation. 
%Assume ${\pmb s}\in W_{\lambda}^{\rm{diag}}$. 
%For the symmetric group ${\frak S}_N$, we have
%\begin{align}
%\zeta_{\lambda}({\pmb s})
%=& \sum_{\substack{m_{11}, m_{22}, \ldots, m_{NN}\geq 1}}
%(m_{11}\ldots m_{NN})^{-z_{0}}\notag\\
%&\times \sum_{\sigma\in {\frak S}_N} {\rm{sgn}}(\sigma)
%\prod_{k=1}^N
%\zeta^{\bullet, H}_{p_k, p_k}({\pmb s}_{+(k)}, m_{\sigma(k)\sigma(k)}, A_{p_k})
%\prod_{j=1}^N \zeta_{q_j}^{H}({\pmb s}_{-(j)}, m_{jj}, A_{q_j}),\label{Dirichlet}
%\end{align}
%where ${\pmb s}_{+(k)}=(z_{1}, z_2, \cdots, z_{p_k})$ and ${\pmb s}_{-(j)}=(z_{-1}, z_{-2}, \cdots, z_{-q_j})$.
%\end{thm}

\begin{remark}
As mentioned in \cite[Remark 1, Remark 7]{MN3}, Theorem \ref{mainthm3} suggests
that Schur multiple zeta-functions have some connections with Weyl group multiple Dirichlet
series studied by Bump, Goldfeld and others (see \cite{Bump}).   We can observe the same
similarity with Weyl group multiple Dirichlet series in Theorem \ref{mainthm4}.
\end{remark}

%%%%%%%%%%%%%%%%%%%%%%%%%%%%%%%%%%%%%%%%%%%%%%%%%%%%%%%%%%%%%%%%%%%%%%%%%%%%%%%%%%%
\section{Derivative formulas}\label{sec-7}
%%%%%%%%%%%%%%%%%%%%%%%%%%%%%%%%%%%%%%%%%%%%%%%%%%%%%%%%%%%%%%%%%%%%%%%%%%%%%%%%%%%%

In the previous sections we obtained several identities  involving Schur-Hurwitz
multiple zeta-functions, which include real parameters $x_{ij}$ or $y_k$.
By differentiating those identities with respect to those parameters, we may expect
to arrive at certain new type of identities.
We conclude the present paper with presenting just one example of such derivative
formulas.    

\begin{remark}
The idea of differentiating some formulas among double polylogarithms
with respect to a shifting parameter to obtain new identities was used in
\cite{MT}.
\end{remark}

We differentiate the both sides of \eqref{hook1} with respect to $y_\ell$, 
$0\leq \ell\leq p$.
On the left-hand side, we see that
\begin{align*}
&\frac{\partial}{\partial y_{\ell}}\zeta_{\lambda}({\pmb s} | {\pmb x})
=\sum_{M\in{\rm SSYT}(\lambda)}\prod_{\substack{(i,j)\in D(\lambda) \\ j-i\neq\ell}}
(m_{ij}+x_{ij})^{-s_{ij}}\frac{\partial}{\partial y_{\ell}}
\Biggl(\prod_{\substack{(i,j)\in D(\lambda) \\ j-i=\ell}}(m_{ij}+y_{\ell})^{-z_{\ell}}
\Biggr)\\
&=-z_{\ell}\sum_{M\in{\rm SSYT}(\lambda)}\prod_{\substack{(i,j)\in D(\lambda) \\ j-i\neq\ell}}
(m_{ij}+x_{ij})^{-s_{ij}}\\
&\qquad\qquad\times\sum_{\substack{(i_1,j_1)\in D(\lambda) \\ j_1-i_1=\ell}}
(m_{i_1 j_1}+y_{\ell})^{-z_{\ell}-1}
\prod_{\substack{(i,j)\in D(\lambda) \\ j-i=\ell \\ (i,j)\neq (i_1,j_1)}}
(m_{ij}+y_{\ell})^{-z_{\ell}}\\
&= -z_{\ell}\sum_{\substack{(i_1,j_1)\in D(\lambda) \\ j_1-i_1=\ell}}
\zeta_{\lambda}({\pmb s}_{1}(i_1,j_1) | {\pmb x}),
\end{align*}
where ${\pmb s}_{a}(i_1,j_1)$ is almost the same as ${\pmb s}$ but $s_{i_1 ,j_1}$ is
replaced by $s_{i_1, j_1}+a$ ($a=1,2$).
It is easier to differentiate the right-hand side of \eqref{hook1}.   The result is the
first identity in the following theorem.    The second identity can be shown by
differentiating one more time.
\begin{thm}
For $\lambda=(p+1,1^q)$, we have
\begin{align}\label{7-2-1}
&\sum_{\substack{(i_1,j_1)\in D(\lambda) \\ j_1-i_1=\ell}}
\zeta_{\lambda}({\pmb s}_{1}(i_1,j_1) | {\pmb x})\\
&=\sum_{j=0}^{q}(-1)^j
\zeta^{\star}(z_{-j}, \ldots, z_{-1}, z_0,\ldots,z_{\ell}+1,\ldots, z_p | y_{-j}, \ldots, y_{-1}, y_0, 
y_1, \ldots, y_p)\notag\\
&\qquad\times\zeta(z_{-j-1}, \ldots, z_{-q} | y_{-j-1}, \ldots, y_{-q})\notag
\end{align}
and
\begin{align}\label{7-2-2}
&\sum_{\substack{(i_1,j_1)\in D(\lambda) \\ j_1-i_1=\ell}}
\zeta_{\lambda}({\pmb s}_{2}(i_1,j_1) | {\pmb x})
+2\sum_{\substack{(i_1,j_1),(i_2,j_2)\in D(\lambda) \\
(i_1,j_1)\neq (i_2,j_2)\\ j_1-i_1=j_2-i_2=\ell}}
\zeta_{\lambda}({\pmb s}_{1}((i_1,j_1),(i_2,j_2)) | {\pmb x})
\\
&=\sum_{j=0}^{q}(-1)^j
\zeta^{\star}(z_{-j}, \ldots, z_{-1}, z_0,\ldots,z_{\ell}+2,\ldots, z_p | y_{-j}, \ldots, y_{-1}, y_0, 
y_1, \ldots, y_p)\notag\\
&\qquad\times\zeta(z_{-j-1}, \ldots, z_{-q} | y_{-j-1}, \ldots, y_{-q}),\notag
\end{align}
where ${\pmb s}_{1}((i_1,j_1),(i_2,j_2))$ is almost the same as ${\pmb s}$ but
$s_{i_1 ,j_1}$ and $s_{i_2,j_2}$ are
replaced by $s_{i_1, j_1}+1$ and $s_{i_2,j_2}+1$, respectively.
\end{thm}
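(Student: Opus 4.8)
The plan is to prove both identities by differentiating formula \eqref{hook1} with respect to the parameter $y_\ell$, once for \eqref{7-2-1} and twice for \eqref{7-2-2}, and matching the left- and right-hand sides term by term. For \eqref{7-2-1}, the computation of $\frac{\partial}{\partial y_\ell}\zeta_\lambda({\pmb s}\mid{\pmb x})$ is exactly the one carried out in the paragraph preceding the statement: since ${\pmb x}\in T^{\rm diag}(\lambda,\mathbb{R}_{\geq 0})$ all entries on the diagonal $j-i=\ell$ equal $y_\ell$, so term-by-term differentiation of the Dirichlet series produces a factor $-z_\ell$ times a sum, over the cells $(i_1,j_1)$ on that diagonal, of the series in which the exponent $z_\ell=s_{i_1 j_1}$ is bumped to $z_\ell+1$; that series is precisely $\zeta_\lambda({\pmb s}_1(i_1,j_1)\mid{\pmb x})$. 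On the right-hand side of \eqref{hook1}, only the factor $\zeta^\star(z_{-j},\ldots,z_0,\ldots,z_p\mid y_{-j},\ldots,y_0,\ldots,y_p)$ depends on $y_\ell$ (through its single argument $y_\ell$ in slot $\ell$), and the standard identity $\frac{\partial}{\partial x}\zeta^\star(\cdots\mid\cdots,x,\cdots)=-z_\ell\,\zeta^\star(\cdots;\text{with }z_\ell\mapsto z_\ell+1\mid\cdots)$ — itself just term-by-term differentiation of the absolutely convergent star-series — yields the right-hand side of \eqref{7-2-1} multiplied by $-z_\ell$. Cancelling the common factor $-z_\ell$ gives \eqref{7-2-1}. (When $z_\ell=0$ the formula still holds by analytic continuation, as in Remark~\ref{conti}.)

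For \eqref{7-2-2}, I would differentiate \eqref{hook1} a second time with respect to $y_\ell$, or equivalently differentiate \eqref{7-2-1} once more, after first restoring the factor $-z_\ell$. On the left-hand side of \eqref{7-2-1} (times $-z_\ell$), a further $\partial/\partial y_\ell$ acts on each $\zeta_\lambda({\pmb s}_1(i_1,j_1)\mid{\pmb x})$; inside this series the diagonal $j-i=\ell$ now carries one exponent equal to $z_\ell+1$ (at $(i_1,j_1)$) and the rest equal to $z_\ell$, so differentiating gives a contribution $-(z_\ell+1)\zeta_\lambda({\pmb s}_2(i_1,j_1)\mid{\pmb x})$ from bumping the already-bumped cell, plus $-z_\ell\,\zeta_\lambda({\pmb s}_1((i_1,j_1),(i_2,j_2))\mid{\pmb x})$ for each other cell $(i_2,j_2)$ on the diagonal. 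Summing over $(i_1,j_1)$, the cross terms $\{(i_1,j_1),(i_2,j_2)\}$ each arise twice (once with each cell playing the role of the already-bumped one), which explains the coefficient $2$ in front of the second sum in \eqref{7-2-2}; the "diagonal" terms give $\sum_{(i_1,j_1)}\zeta_\lambda({\pmb s}_2(i_1,j_1)\mid{\pmb x})$ with an overall factor. Meanwhile on the right-hand side, a second differentiation of the single $y_\ell$-slot of each $\zeta^\star$ produces $z_\ell(z_\ell+1)\,\zeta^\star(\cdots;z_\ell\mapsto z_\ell+2\mid\cdots)$. One then checks that the polynomial prefactors match — after the first step both sides carry $-z_\ell$, and after the second step both carry $z_\ell(z_\ell+1)$ — so dividing through by $z_\ell(z_\ell+1)$ (and invoking continuation for the values where this vanishes) yields \eqref{7-2-2}.

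The only genuine subtlety — and the main point requiring care — is the bookkeeping of the combinatorial coefficients under the second differentiation: tracking which cell on the diagonal has already been bumped, recognizing that interchanging the two bumped cells gives the same tableau-sum so that each unordered pair is counted twice, and confirming that the scalar prefactor $z_\ell(z_\ell+1)$ emerging on the right exactly cancels the one on the left. Everything else is routine term-by-term differentiation of absolutely convergent Dirichlet series (valid for ${\pmb s}$ in the relevant region of $W_\lambda$, and extended by meromorphic continuation as in Remark~\ref{conti}), together with the observation that on the right-hand side of \eqref{hook1} the parameter $y_\ell$ occurs in only one slot of one factor, namely slot $\ell$ of $\zeta^\star$, since $0\le\ell\le p$ puts it strictly inside the star-argument and absent from the $\zeta$-factor entirely.
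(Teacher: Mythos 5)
Your treatment of \eqref{7-2-1} is correct and is exactly the paper's argument: differentiate \eqref{hook1} once in $y_\ell$, observe that both sides acquire the common factor $-z_\ell$ (the left-hand side because every cell on the diagonal $j-i=\ell$ shares the exponent $z_\ell$ and the shift $y_\ell$; the right-hand side because $y_\ell$ occupies a single slot of the $\zeta^{\star}$-factor only), and cancel, extending to degenerate parameter values by continuation.

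For \eqref{7-2-2}, however, your prefactor check is wrong. Writing out the second $y_\ell$-derivative of the left-hand side, the terms in which the same cell is bumped twice indeed carry the factor $z_\ell(z_\ell+1)$, but the mixed terms, in which two distinct cells $(i_1,j_1)\neq(i_2,j_2)$ on the diagonal are each bumped once, carry the factor $z_\ell\cdot z_\ell=z_\ell^2$: the second differentiation there hits an exponent that is still $z_\ell$, not $z_\ell+1$. So the two families of terms do not share a common polynomial prefactor; dividing by $z_\ell(z_\ell+1)$ leaves the mixed terms with coefficient $2z_\ell/(z_\ell+1)$ per unordered pair rather than the coefficient $2$ appearing in \eqref{7-2-2}. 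Your assertion that ``after the second step both carry $z_\ell(z_\ell+1)$'' is therefore false, and the cancellation you flag as the main point requiring care is exactly the point that fails. What rescues the statement is a fact you never invoke: for the hook $\lambda=(p+1,1^q)$ and $0\le\ell\le p$, the diagonal $j_1-i_1=\ell$ consists of the single cell $(1,\ell+1)$, so the sum over distinct pairs in \eqref{7-2-2} is empty and only the $z_\ell(z_\ell+1)$-terms survive; after that observation the division goes through and \eqref{7-2-2} follows. As written, your argument would equally purport to prove the analogous identity with coefficient $2$ for a general shape $\lambda$ with several cells on the diagonal, where it is not correct.
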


The same type of identities can be deduced by differentiating more and more.
Similar discussion can be done by starting from \eqref{hook2} instead of \eqref{hook1}.
Other formulas proved in the previous sections can also be used as the starting point,
so we can obtain a lot of derivative formulas.
Moreover it is to be stressed that, putting $y_j=0$ for all $j$ after the
differentiation, we can show various identities among non-shifted multiple
zeta-functions of Schur type and of Euler-Zagier type.

%%%%%%%%%%%%%%%%%%%%%%%%%%%%%%%%%%%%%%%%%%%%%%%%%%%%%%%%%%%%%%%%%%%%%%%%%%%%%%%%%%%%%
 
%
\bigskip
\noindent
\textsc{Kohji Matsumoto}\\
Graduate School of Mathematics, \\
Nagoya University, \\
Furo-cho, Chikusa-ku, Nagoya, 464-8602, Japan \\
 \texttt{kohjimat@math.nagoya-u.ac.jp}\\
and\\
Center for General Education, \\
Aichi Institute of Technology, \\
1247 Yachigusa, Yakusa-cho, Toyota, 470-0392, Japan \\

\medskip

\noindent
\textsc{Maki Nakasuji}\\
Department of Information and Communication Science, Faculty of Science, \\
 Sophia University, \\
 7-1 Kioi-cho, Chiyoda-ku, Tokyo, 102-8554, Japan \\
 \texttt{nakasuji@sophia.ac.jp}\\
and\\
Mathematical Institute, \\
Tohoku University, \\
6-3 Aoba, Aramaki, Aoba-ku, Sendai, 980-8578, Japan \\
  
\end{document}